\documentclass[11pt,a4paper]{article}%
\usepackage{amssymb,amsmath, amsfonts}
\usepackage{graphicx,graphics}
\usepackage{mathtools}
\usepackage{bbold}
\usepackage[english]{babel}
\usepackage[utf8]{inputenc}
\usepackage{epsfig,url}
\usepackage{bbm,theorem}
\usepackage{a4wide}
\usepackage{color}
\usepackage{enumerate}
\usepackage{calrsfs}
\usepackage[bookmarks=true]{hyperref}
\usepackage{bookmark}
\usepackage{amsmath}
\usepackage{amsfonts}
\usepackage{amssymb}
\usepackage{verbatim}
\usepackage{graphicx}%
\setcounter{MaxMatrixCols}{30}
\providecommand{\U}[1]{\protect\rule{.1in}{.1in}}
\providecommand{\U}[1]{\protect\rule{.1in}{.1in}}
\DeclareMathAlphabet{\pazocal}{OMS}{zplm}{m}{n}
\newtheorem{theorem}{Theorem}[section]
\newtheorem{definition}[theorem]{Definition}

\newtheorem{lemma}[theorem]{Lemma}
\newtheorem{proposition}[theorem]{Proposition}
\newtheorem{assumption}[theorem]{Assumption}

{\theorembodyfont{\upshape}
\newtheorem{remark}[theorem]{Remark}

}
\numberwithin{equation}{section}
\numberwithin{theorem}{section}
\newcommand{\qed}{\hfill$\Box$}
\newenvironment{proof}{\begin{trivlist}\item[]{\em Proof:}\/}{\qed\end{trivlist}}
\newenvironment{proofof}[1][Proof]{\noindent \textit{#1.} }{\ \qed}

\newcommand{\R}{{\mathbb R}}
\newcommand{\C}{{\mathbb C\hspace{0.05 ex}}}
\newcommand{\N}{{\mathbb N}}

\newcommand{\ep}{\varepsilon}

\newcommand{\beq}{\begin{equation}}
\newcommand{\eeq}{\end{equation}}
\newcommand{\beqs}{\begin{eqnarray}}
\newcommand{\eeqs}{\end{eqnarray}}

\newcommand{\defset}[2]{ \left\{ #1\,{:}\,
#2\makebox[0cm]{$\displaystyle\phantom{#1}$}\right\} }

\newcommand{\vep}{\varepsilon}
\newcommand{\defem}[1]{{\em #1\/}}

\newcounter{jlisti}

\newcommand{\cf}[1]{{\mathbbm 1}_{\{#1\}}}

\begin{document}

\title{Multicomponent coagulation systems: existence and non-existence of stationary non-equilibrium solutions}
\author{Marina A. Ferreira, Jani Lukkarinen, Alessia Nota, Juan J. L. Vel\'azquez}
\maketitle

\begin{abstract}

We study multicomponent coagulation via the Smoluchowski coagulation equation
under non-equilibrium stationary conditions induced by a source of small 
clusters. 
The coagulation kernel can be very general, merely satisfying certain 
power law asymptotic bounds in terms of the total number of monomers in a cluster.
The bounds are characterized by two parameters and we extend previous results for one-component systems to classify 
the parameter values for which the above stationary solutions  do  or do not exist. 
Moreover, we also obtain criteria for the existence or non-existence of solutions which yield a constant flux of mass towards large clusters. 
\end{abstract}

\textbf{Keywords:}  Multicomponent Smoluchowski's equation; non-equilibrium dynamics; source term; stationary injection solutions; constant flux solutions; mass flux.

\tableofcontents

\bigskip

\bigskip

\section{Introduction}

Smoluchowski's coagulation equation is a classical model for binary aggregation extensively used in the study of aerosol growth, polymerization, drop formation in rain and several other situations \cite{Fried, Vehkam, Vehkamki2012}. Particles undergo complex phenonena that influence their movement, size and composition. These particles grow due to coalescence, however, how they grow and how their composition changes is not well understood.  

The particle clusters are made of aggregates of
different types of coagulating molecules, which are called 
\defem{monomers}. 
We denote by $n_{\alpha}$  the
concentrations of multicomponent clusters, with composition $\alpha=\left(  \alpha_{1},\alpha
_{2},\ldots ,\alpha_{d}\right)  \in\mathbb{N}_{0}^{d}$ where ${\alpha}_i\in\mathbb{N}_{0} $ denotes the number of monomers
of type $i$. Notice that  $\mathbb{N}%
_{0}=\left\{  0,1,2,3,\ldots \right\}$ and we denote $O=\left(0,0,\ldots ,0\right)$.  The multicomponent Smoluchowski's coagulation equation, which describes  the evolution of the clusters concentrations $\{n_{\alpha}\}_{\alpha\in\mathbb{N}_{0}^{d}\backslash \{O\}}$, is given by 
\begin{equation}
\partial_{t}n_{\alpha}=\frac{1}{2}\sum_{\beta<\alpha}K_{\alpha-\beta,\beta
}n_{\alpha-\beta}n_{\beta}-n_{\alpha}\sum_{\beta>0}K_{\alpha,\beta}n_{\beta
}+s_{\alpha}\,,
\label{B2} 
\end{equation}
where $\alpha=\left(  \alpha_{1},\alpha_{2},\ldots ,\alpha_{d}\right)  $
and $\beta=\left(  \beta_{1},\beta_{2},\ldots ,\beta_{d}\right)  .$ The coefficients $K_{\alpha,\beta}$ describe the coagulation rate between
clusters with compositions $\alpha$ and $\beta$. 
 We use the
notation $\beta<\alpha$ to indicate that $\beta_{k}\leq\alpha_{k}$ for all
$k=1,2,\ldots ,d$, and in addition $\alpha\neq\beta.$ We denote as $s_{\alpha}$ the
source of small particles characterized by the composition $\alpha.$ We will allow  source terms 
$s_{\alpha}$ which are 
supported on a finite set of values $\alpha$.

The coefficients $K_{\alpha,\beta}$ yield the coagulation rate between
clusters $\alpha$ and $\beta$ to produce clusters $\left(  \alpha
+\beta\right)  .$ The form of these coefficients depends on the specific
mechanism which is responsible for the aggregation of the clusters.
These coefficients have been computed using the method of kinetic theory under different
assumptions on the particle sizes and the processes describing the motion of the
clusters.

Relevant examples of coagulation kernels have been described in the literature, see e.g.\ the textbook 
\cite{Fried}.  Typical ones are given by  the \emph{free molecular regime
coagulation kernel}  
\begin{equation}
K_{\alpha,\beta}= 
\left(  \frac{1}{V\left(  \alpha\right)  }+\frac
{1}{V\left(  \beta\right)  }\right)  ^{\frac{1}{2}}\left(  \left(  V\left(
\alpha\right)  \right)  ^{\frac{1}{3}}+\left(  V\left(  \beta\right)  \right)
^{\frac{1}{3}}\right)  ^{2}\,. \label{KerBall}%
\end{equation}
and the \emph{diffusive coagulation} or \emph{Brownian kernel}
\begin{equation}
K_{\alpha,\beta}= 
\left(  \frac{1}{\left(  V\left(
\alpha\right)  \right)  ^{\frac{1}{3}}}+\frac{1}{\left(  V\left(
\beta\right)  \right)  ^{\frac{1}{3}}}\right)  \left(  \left(  V\left(
\alpha\right)  \right)  ^{\frac{1}{3}}+\left(  V\left(  \beta\right)  \right)
^{\frac{1}{3}}\right).  \label{KernBrow}
\end{equation}
Here $V\left(  \alpha\right)$  is the volume of the cluster characterized by the composition $\alpha$. More details on the physical properties and the derivation of the kernels can be found in \cite{FLNV}, \cite{FLNV1}.   
In these formulas we will assume that the volume scales linearly with the number of monomers in the cluster. More precisely, 
\begin{equation}
k_{1}\left\vert \alpha\right\vert \leq V\left(  \alpha\right)  \leq
k_{2}\left\vert \alpha\right\vert \ \ \ \ \text{with}\quad0<k_{1}\leq
k_{2}<\infty\,, \label{MasVolIneq}
\end{equation}
where $|\cdot|$ denotes the $\ell^1$-norm, i.e.,
\begin{equation}
\label{eq:l1norm}\left\vert \alpha\right\vert =\sum_{j=1}^{d}\alpha_{j}, \quad \alpha \in\mathbb{N}_{0}^{d}\setminus\{O\}.
\end{equation}
 The inequalities (\ref{MasVolIneq}) hold, for
instance,  if we assume $V\left(  \alpha\right)  =\sum_{j=1}^{d}\alpha_{j}v_{j}$ where
$v_{j}>0$ represents the volume of the monomer of type $j$ for each $j=1,2,\ldots ,d.$

\medskip

We also consider the continuous version of (\ref{B2}) which is given by 
\begin{align}
& \partial_{t}f\left(  x\right)  =\frac{1}{2}\int_{\left\{  0<\xi<x \right\}
}d\xi K\left(  x-\xi,\xi\right)  f\left(  x-\xi\right)  f\left(  \xi\right)
\nonumber \\ & \qquad
-\int_{ \mathbb{R}_{+}^{d}\setminus\{O\}}d\xi K\left(  x,\xi\right)  f\left(  x\right)
f\left(  \xi\right)  +\eta\left(  x\right) \, ,\quad x\in \mathbb{R}_{+}^{d}\setminus\{O\} \label{B3}%
\end{align}
where $f$ denotes the density of clusters with composition $x\in \mathbb{R}_{+}^{d}\setminus\{O\} $. In the same way as in the discrete case, given $x=\left(  x_{1},x_{2},\ldots ,x_{d}\right)  ,$ $y=\left(  y_{1}%
,y_{2},\ldots ,y_{d}\right)  $ we would say that $x<y$ whenever $x\le y$ componentwise, and $x\ne y$.
In particular, 
\[
\int_{\left\{  0<\xi<x \right\}  } d\xi=\int_{0}^{x_{1}}d\xi_{1}\int
_{0}^{x_{2}}d\xi_{2}\cdots \int_{0}^{x_{d}}d\xi_{d}\,.
\]

We notice that the discrete model \eqref{B2} can be thought as a particular case of the continuous one if we assume that  $f$ is the sum of Dirac measures  supported at points with integer coordinates. 

\medskip

In this paper we will consider the stationary solutions to the problems
(\ref{B2}) and (\ref{B3}). In order to obtain nontrivial solutions we will
require that $\sum_{\beta}s_{\beta}>0$ in
(\ref{B2}). In the case of (\ref{B3}) we will assume that $\eta$ is a  Radon measure with $0<\int \eta\left(  dx\right) <\infty$.

We will restrict our attention to the class of coagulation kernels satisfying 
\begin{align}\label{Ineq1}
c_{1}\left(  \left\vert \alpha\right\vert +\left\vert \beta\right\vert
\right)  ^{\gamma}\Phi\left(  \frac{\left\vert \alpha\right\vert }{\left\vert
\alpha\right\vert +\left\vert \beta\right\vert }\right)   &  \leq
K_{\alpha,\beta}\leq c_{2}\left(  \left\vert \alpha\right\vert +\left\vert
\beta\right\vert \right)  ^{\gamma}\Phi\left(  \frac{\left\vert \alpha
\right\vert }{\left\vert \alpha\right\vert +\left\vert \beta\right\vert
}\right)   ,\ \alpha,\beta\in\mathbb{N}_{0}^{d}\setminus\{O\}
\\
c_{1}\left(  \left\vert x\right\vert +\left\vert y\right\vert \right)
^{\gamma}\Phi\left(  \frac{\left\vert x\right\vert }{\left\vert x\right\vert
+\left\vert y\right\vert }\right)   &  \leq K\left(  x,y\right)  \leq
c_{2}\left(  \left\vert x\right\vert +\left\vert y\right\vert \right)
^{\gamma}\Phi\left(  \frac{\left\vert x\right\vert }{\left\vert x\right\vert
+\left\vert y\right\vert }\right)  ,\, x,y\in\mathbb{R}_{+}^{d}\setminus\{O\}
\label{Ineq2}
\end{align}
where 
$\left\vert \cdot \right\vert $ denotes the $\ell^{1}$ norm as in \eqref{eq:l1norm} and 
\begin{equation}
\Phi\left(  s\right)  =\Phi\left(  1-s\right)  \text{ for }%
0<s<1\ \ ,\ \ \ \Phi\left(  s\right)  =\frac{1}{s^{p}\left(  1-s\right)  ^{p}%
}\ \ \text{for \ }0<s<1\ \ ,\ \ p\in\mathbb{R} \,,\label{Ineq3}%
\end{equation}
where $0<c_{1}\leq c_{2}<\infty.$   
This class of kernels includes
the physically relevant kernels (\ref{KerBall}) and (\ref{KernBrow}).  We stress that even though the estimates \eqref{Ineq1}, \eqref{Ineq2} are isotropic, i.e., invariant under permutation of components, the kernels are not necessarily isotropic. 

The existence of steady states to the problems (\ref{B2}),\ (\ref{B3}) in the
case $d=1$ has been considered in \cite{FLNV} for a   less general class of kernels than the one covered by the assumptions 
(\ref{Ineq1})--(\ref{Ineq3}). Indeed, the conditions assumed in \cite{FLNV} (with $d=1$) are
\begin{align}
c_{1}\left(  \alpha^{\gamma+\lambda}\beta^{-\lambda}+\beta^{\gamma+\lambda
}\alpha^{-\lambda}\right)   &  \leq K_{\alpha,\beta}\leq c_{2}\left(
\alpha^{\gamma+\lambda}\beta^{-\lambda}+\beta^{\gamma+\lambda}\alpha
^{-\lambda}\right)\,, \label{PrevIned1}\\
c_{1}\left(  x^{\gamma+\lambda}y^{-\lambda}+y^{\gamma+\lambda}x^{-\lambda
}\right)   &  \leq K\left(  x,y\right)  \leq c_{2}\left(  x^{\gamma+\lambda
}y^{-\lambda}+y^{\gamma+\lambda}x^{-\lambda}\right)  \label{PrevIneq2}%
\end{align}
for some $\gamma,\lambda\in\mathbb{R}$. It is readily seen that the kernels
satisfying (\ref{PrevIned1}), (\ref{PrevIneq2}) satisfy also (\ref{Ineq1}%
)--(\ref{Ineq3}) with $p=\max\left\{  \lambda,-\left(  \gamma+\lambda\right)
\right\}  $ (assuming $d=1$). On the other hand, for
any $p\in\mathbb{R}$ with $p\geq-\frac{\gamma}{2}$ there exists at least one
value $\lambda\in\mathbb{R}$ such that $\max\left\{  \lambda,-\left(
\gamma+\lambda\right)  \right\}  =p$.  In fact, we can take by definiteness
$\lambda=p,$ since then $-\left(  \gamma+\lambda\right)
\leq2p-\lambda=\lambda,$ and therefore, $\max\left\{  \lambda,-\left(  \gamma
+\lambda\right)  \right\}  =\lambda=p.$ If $p<-\frac{\gamma}{2}$, it is not
possible to choose $\lambda$ such that $p=\max\left\{  \lambda,-\left(
\gamma+\lambda\right)  \right\}  .$ Therefore, the class of kernels satisfying
(\ref{Ineq1})--(\ref{Ineq3}) is strictly larger than the class satisfying
(\ref{PrevIned1}), (\ref{PrevIneq2}).

 In this paper we will prove that in the
multicomponent case and under the assumptions (\ref{Ineq1})--(\ref{Ineq3}),
there exists a stationary solution to (\ref{B2}), (\ref{B3}) if and only if 
\begin{equation}
\gamma+2p<1\, . \label{ExCond}%
\end{equation}
We note that for $p=\max\left\{  \lambda,-\left(  \gamma
+\lambda\right)  \right\} $, condition \eqref{ExCond} implies the condition $|\gamma+2\lambda|<1$ obtained in \cite{FLNV}  for the kernels satisfying (\ref{PrevIned1}), (\ref{PrevIneq2}). 
Indeed, if
$\lambda\geq-\left(  \gamma+\lambda\right)  $, we have $\gamma+2\lambda\geq0$
and, since $p=\lambda$, (\ref{ExCond}) is equivalent to $\gamma+2\lambda<1.$ Otherwise, if
$\lambda<-\left(  \gamma+\lambda\right)  $, we have $\gamma
+2\lambda<0,\ p=-\left(  \gamma+\lambda\right)  ,$ and thus (\ref{ExCond})
is equivalent to $\gamma+2\lambda>-1.$ Therefore, (\ref{ExCond}) holds if
and only if $\left\vert \gamma+2\lambda\right\vert <1$, whenever the two cases can be compared.

Notice that these steady states yield a transfer of monomers from small clusters to large clusters in the space of clusters sizes. Their existence express the balance between the injection of small clusters (e.g. monomers) and the transport of these monomers towards clusters of infinite size due to the coagulation mechanism. The non-existence of these steady states is due to the fact that the transport of monomers towards large clusters is too fast and cannot be balanced by any monomers injection, and therefore no stationary regime is possible.
We emphasize that the steady states of (\ref{B2}), (\ref{B3}) are stationary non-equilibrium solutions for an open system.

It is worth to mention that in the case of discrete kernels with the form
$K_{\alpha,\beta}=\alpha^{\gamma+\lambda} \beta^{-\lambda}+ \alpha^{-\lambda} \beta^{\gamma+\lambda}$,
and source terms supported at the monomers, the stationary solutions of
\eqref{B2} in dimension $d=1$ have been computed explicitly in \cite{HH} assuming the non gelling condition $\max\{\gamma+\lambda, -\lambda\}\leq 1$. It turns out that these solutions  are well defined, non-negative, densities of clusters distributions if and only if  
$\vert \gamma+2\lambda\vert<1$ holds. 

It is interesting to note that the existence or nonexistence of stationary
solutions to (\ref{B2}), (\ref{B3}) is independent of the number of components
$d$ here.  Both cases are also already represented by the two example kernels discussed above. In the case of kernels with the form (\ref{KerBall}), we
have $\gamma=\frac{1}{6}$ and $p=\frac{1}{2}$.  Thus the inequality
(\ref{ExCond}) is not satisfied, and there are no stationary solutions.
On the other hand, in the case of kernels
with the form (\ref{KernBrow}) we have $\gamma=0$ and $p=\frac{1}{3}$.  Then
the inequality (\ref{ExCond}) holds, and there exists at least one stationary solution.

In this paper we will prove the existence of steady states to \eqref{B2}, \eqref{B3} under the
assumption \eqref{ExCond} and nonexistence of steady states if $\gamma
+2p\geq1$. The proofs of these results require a generalization of the methods developed in \cite{FLNV} to the multicomponent case. 
In particular, in the multicomponent setting,  the mass flows from small to large sizes through a (d-1)-dimensional surface rather than a point, as in the one-dimensional case. This allows for more possibilities in the choice of the  definition of flux and some care is needed in choosing an appropriate definition (cf.\ Section \ref{ss:stattypes}). Moreover, in the multicomponent setting, the proofs require more refined geometrical arguments than the ones used in the one-component case.

In the physical literature, explicit stationary solutions to the multicomponent equation \eqref{B2} have been obtained in \cite{KbN} in the case of the constant kernel $K(x,y)=1$ and additive kernel $K(x,y)=x+y $ and with a source term supported on the monomers. 

For non-solving kernels, most of the mathematical analysis of coagulation equations has been made for
one-component systems only, i.e., $d=1$.  On the other hand, there are only a few papers
addressing the problem of the coagulation equations with injection terms like
$\{s_{\alpha}\}_\alpha$ or
$\eta$ (cf. \cite{Dub, EM06, FLNV, Laurencotpreprint20}). This issue has been discussed in \cite{FLNV} and we refer to that
paper for additional references.

An interesting property of the steady
states to (\ref{B2}), (\ref{B3}) specific to the multicomponent
coagulation system, that does not have a counterpart in the case $d=1$,
is the so-called 
\textit{localization property}.  It consists in the fact that the concentrations $n_{\alpha}$ localize along a particular line in the space $\mathbb{N}_{0}^{d}$ as $\left\vert \alpha\right\vert \to \infty$. A similar property holds in the continuous case, namely the density $f$ concentrates along a specific direction of the cone ${\mathbb{R}}^{d}_{+}$ as $\left\vert x\right\vert \to \infty$. The precise formulation is the following. If $n_{\alpha}$ and $f$ are  stationary solutions to  (\ref{B2}), (\ref{B3}) respectively then there is a $\zeta>0$ such that, for any $\ep>0$, 
\begin{equation}
\lim_{R\rightarrow\infty}\frac{\sum_{\left\{  R\leq\left\vert \alpha
\right\vert \leq \zeta R\right\}  \cap\left\{  \left\vert \frac{\alpha}{\left\vert
\alpha\right\vert }-\theta\right\vert <\varepsilon\right\}  }n_{\alpha}}%
{\sum_{\left\{  R\leq\left\vert \alpha\right\vert \leq\zeta R\right\}  }n_{\alpha}%
}=1\ \ \text{or\ \ }\lim_{R\rightarrow\infty}\frac{\int_{\left\{
R\leq\left\vert x\right\vert \leq\zeta R\right\}  \cap\left\{  \left\vert \frac
{x}{\left\vert x\right\vert }-\theta\right\vert <\varepsilon\right\}
}f\left(  dx\right)  }{\int_{\left\{  R\leq\left\vert x\right\vert
\leq\zeta R\right\}  }f\left(  dx\right)  }=1. \label{LocStat}%
\end{equation}
where the direction $\theta$ is defined by the normalized mass vector of the source $s_\alpha$ or $\eta$ such that $|\theta|=1$. The proof of this result is given in \cite{FLNV1} for the class of kernels satisfying (\ref{Ineq1})--(\ref{Ineq3}).

\subsubsection*{Structure of the paper }

The plan of the paper is the following. In Section \ref{ss:stattypes} we
informally discuss the different types of stationary solutions considered in
this paper (constant injection solutions, constant flux solutions, \dots). In
Section \ref{StatDef} we introduce rigorously the definitions of solutions
studied in this paper. In Section \ref{sec:4} we formulate 
the main results that we prove in this paper, namely, existence or nonexistence of stationary injection solutions or constant flux solutions for several classes of kernels. Section
\ref{sec:5} contains two technical results which are repeatedly used in the
rest of the paper. The proof of the existence of  steady states for some classes of kernels is the
content of Section \ref{sec:6}. The non-existence results for a different class of kernels are given in Section \ref{sec:9}.
Section \ref{sec:7} provides some estimates
for the stationary solutions, whenever they exist.

\subsubsection*{Notations}

We will denote by $\mathbb{R}_{+}:={[}0,\infty {)} $ and $\N_0:= \{0,1,2,\ldots\}$ the non-negative real numbers and integers respectively.  We also 
use a subindex ``$*$'' to denote restriction of real-component vectors $x$ to those which satisfy $x>0$,  or equivalently $\max_i (x_i)>0$.  More precisely, we denote
$\R_*:=\R_+\setminus\{0\}$, $\R^d_*:=\R_+^d\setminus\{O\}$ and $\N^d_*:=\N_0^d\setminus\{O\}$.
Given a locally compact Hausdorff space $X$ (for instance $X=\R^d_*$) we denote by  
$C_{c}\left(X\right) $ the space of compactly supported
continuous functions from $X$ to $\C$, and by $C_0(X)$  its completion in the standard supremum norm. 
The collection of non-negative Radon measures on $X$, not necessarily
bounded, will be denoted by $\mathcal{M}_{+}(X)$ and its subspace consisting of bounded
measures by $\mathcal{M}_{+,b}(X)$.  Due to the Riesz--Markov--Kakutani theorem, we can identify $ \mathcal{M}_{+}(X) $  with the space of positive linear functionals on $C_{c}(X)$.

Both the notation $\eta(x) dx$ and $\eta(dx)$ will be used to denote elements of the above measure spaces.
We will use the symbol $\eta( dx) $ when performing integrations or when we want to emphasize that the measure might not be absolutely continuous with respect to the Lebesgue measure.  We will often drop the differential ``$dx$'' from the first notation, typically when the measure eventually turns out to be absolutely continuous. 
We will use the symbol $\cf{P}$ to denote the characteristic function of a condition $P$: $\cf{P}=1$ if the condition $P$ is
true, and $\cf{P}=0$ if $P$ is false.


\section{Different types of stationary solutions for multicomponent coagulation
equations\label{StatTypes}}

\bigskip

We now introduce different types of stationary solutions of (\ref{B2}),
(\ref{B3}) which will be considered in this paper. These classes of solutions
have been discussed in \cite{FLNV} in the case $d=1.$ We will adapt the
definitions used in that paper to the multicomponent case. We recall that in
all the cases discussed in this Section, the solutions are stationary,
nonequilibrium solutions yielding a constant flux of monomers towards large
clusters. We discuss shortly here these classes of solutions as well as their
physical meaning.

\subsection{Heuristic description of flux and constant flux solutions}\label{ss:stattypes}

In this section, we first introduce different concepts of stationary solutions used in this paper.  The rigorous, more detailed, definitions are collected in Subsection \ref{StatDef}.

\subsubsection*{Stationary injection solutions}

The stationary solutions of (\ref{B2}), (\ref{B3}) satisfy respectively the equations 
\begin{align}
&0=\frac{1}{2}\sum_{\beta<\alpha}K_{\alpha-\beta,\beta}n_{\alpha-\beta}%
n_{\beta}-n_{\alpha}\sum_{\beta>0}K_{\alpha,\beta}n_{\beta}+s_{\alpha}\ \ ,\ \ \alpha\in\mathbb{N}_{*}^{d} \,,\label{B2Stat} \\
& 0=\frac{1}{2} \int_{\left\{  0<y <x \right\}  } K\left(  x-y,y\right)
f\left(  x-y,t\right)  f\left(  y,t\right)  dy-\int_{\mathbb{R}_{*}^{d}}K\left(
x,y\right)  f\left(  x,t\right)  f\left(  y,t\right)  dy
\nonumber \\ & \qquad 
+\eta\left(  x\right)
\,, \quad x\in\mathbb{R}_{*}^{d}\,.
\label{eq:contStat}%
\end{align}
We will assume that the sequence $s_{\alpha}$ is supported in a finite set of
values of $\alpha.$ On the other hand, we will assume that $\eta\in
\mathcal{M}_{+}\left(  \mathbb{R}_{*}^{d}\right)  $ is a Radon measure
compactly supported in the set $x\geq\mathbf{1}$, where $\mathbf{1}=\left(
1,1,\ldots ,1\right)  \in\mathbb{R}_{*}^{d}$
(for examples of how to relax the assumptions about the source, we refer to a recent preprint \cite{Laurencotpreprint20} where compact support is not required assuming that the solution $f$ is absolutely continuous with respect to the Lebesgue measure).
In this paper we are mostly interested in the solutions of the equations
(\ref{B2Stat}), (\ref{eq:contStat}) which we call \defem{stationary injection solutions}.  
Their detailed definition will be given in
Section \ref{StatDef}.

\subsubsection*{Constant flux solutions}

In addition to the above injection solutions, 
in the one-component case ($d=1$) we have considered in
\cite{FLNV} a family of solutions of (\ref{eq:contStat}) with $\eta=0$ that
we have termed as \textit{constant flux solutions}. 
The terminology and motivation arise from the fact that the coagulation equation without a source, at least formally, conserves ``total mass'', the function $\int x f(x,t) dx$. 
This conservation law leads to a continuity equation, which may be written as
\[
 \partial_t (x f(x,t)) + \partial_{x}J(  x;f)=0\,,
\]
where the \defem{flux} can be defined by 
\[
J\left(  x;f\right)  =\int_{0}^{x}dy\int_{x-y}^{\infty}dz \,K\left(  y,z\right)
yf\left(  y\right)  f\left(  z\right)\,.
\]
In this case, we find that $f$ is a stationary solution if and only if for all $x>0$
\begin{equation}
\partial_{x}J\left(  x;f\right)  =0\,, \label{fluxStat}%
\end{equation}
i.e., if and only if the flux is constant in $x$.
Therefore, if there is $J_0\ge 0$ and a measure $f\in\mathcal{M}_{+}\left(  \mathbb{R}_{*}\right)  $
such that
\begin{equation}
J\left(  x;f\right)  =J_{0}\,, \qquad \text{for all }x>0\,,
\label{EqConFlSol}%
\end{equation}
we say that $f$ is a constant flux solution.  
We say that the solution has a \defem{non-trivial flux} if $J_0>0$.  In this case, clearly also $f\ne 0$.

In the above one-dimensional case, any sufficiently regular constant flux solution $f$
also has the property that
\begin{equation}
0=-\frac{1}{x}\partial_{x}J(  x;f)=\frac{1}{2}\int_{0}^{x}K\left(  x-y,y\right)  f\left(  x-y,t\right)
f\left(  y,t\right)  dy-\int_{0}^{\infty}K\left(  x,y\right)  f\left(
x,t\right)  f\left(  y,t\right)  dy\,. \label{StatZeroSource}%
\end{equation}
Comparing the result with \eqref{eq:contStat} shows that these
are stationary solutions to the original evolution equation without source,
albeit with a slightly non-standard physical interpretation as solutions with non-trivial source of ``particles'' located at $x=0$. Indeed, one practical use for 
the constant flux solutions comes from the observation 
that they can provide the asymptotics of stationary injection solutions.
It has been proven in \cite{FLNV} that the
stationary injection solutions both of the discrete and the continuous model
(cf. (\ref{B2Stat}), (\ref{eq:contStat})) behave for large values of $\alpha$
or $x$ as a constant flux solution. More precisely, rescaling $n_{\alpha}$ or
$f$ in a suitable manner we obtain some measures that converge for large
values to a measure which satisfies (\ref{EqConFlSol}). We refer to
\cite{FLNV} for the detailed results.

In the multicomponent
case without source, the total mass of each of the particle species is conserved, so there  are 
now $d$ mass continuity equations, as derived below.  In addition,
the analogue of \eqref{fluxStat} is a vectorial divergence equation.  
Therefore,
it is not possible to characterize the fluxes at a given point just by one number.
In order to define a suitable concept of constant flux solutions in the
multicomponent case we must take into account that, if $d>1$, we cannot expect
the solutions of (\ref{StatZeroSource}) to be uniquely characterized by the
flux of particles across all the surfaces $\left\{  \left\vert x\right\vert
=R\right\}  $ for arbitrary values of $R>0$, where the norm $|\cdot|$ is as in
\eqref{eq:l1norm}. Let us introduce the change of variables $x\rightarrow
\left(  \left\vert x\right\vert ,\theta\right)  $ where $\theta=\frac
{x}{\left\vert x\right\vert }\in\Delta^{d-1}$ where we denote by $\Delta
^{d-1}$ the simplex
\begin{equation}
\label{eq:simplex}\Delta^{d-1}=\left\{  \theta\in\mathbb{R}_{*}^{d} :\left\vert
\theta\right\vert =1\right\} .
\end{equation}
Then, the detailed distribution of the measure $f$ in the variable $\theta$ in
each surface $\left\{  \left\vert x\right\vert =R\right\}  $ must be obtained
from the generalization of equation (\ref{fluxStat}) to the multicomponent
case and it cannot be determined just from the values of the fluxes across
these surfaces.

We now rewrite  equation (\ref{eq:contStat}) in the form of divergences of
fluxes.  
To this end, we choose a component $j\in \{1,2,\ldots,d\}$ and multiply (\ref{eq:contStat}) by $x_j$.  Expanding in the first term $x_j=(x-\xi)_j+\xi_j$ and using the symmetry $\xi\leftrightarrow\left(x-\xi\right)  $, we obtain 
\[
0=\int_{\left\{ 0<\xi<x\right\} }d\xi K\left(  x-\xi,\xi\right)  
\left(x-\xi\right)_j  f\left(  x-\xi\right)  f\left(  \xi\right)  -\int_{{\mathbb{R}}_{*}^{d}} d\xi K\left(  x,\xi\right)  x_j f\left(  x\right)  f\left(
\xi\right)  +x_j\eta\left(  x\right)\,.
\]
We then multiply this equation by a test function $\varphi\in  C_c(\R^d_*)$.
The support of $\varphi$ is a compact subset of $\R^d_+\setminus\{O\}$ and thus it is bounded and separated by a finite distance from the origin.  Thus we can find
$a,b>0$ with $a<b$ such that the support of $\varphi$ is  
contained in the set $\left\{  x:\left\vert x\right\vert \in\left[  a,b\right]  \right\}$. Then,
using Fubini's Theorem and assuming that all the integrals appearing in the computations
are finite, we obtain
\begin{equation}
0=\int_{\mathbb{R}_{*}^{d}}d\xi\int_{\mathbb{R}_{*}^{d}}dx\left[
\varphi\left(  x+\xi\right)  -\varphi\left(  x\right)  \right]  K\left(
x,\xi\right)  x_j f\left(  x\right)  f\left(  \xi\right)  +\int_{{\mathbb{R}}%
_{*}^{d}}x_j\eta\left(  x\right)  \varphi\left(  x\right)  dx \label{B8}%
\end{equation}
Here
\[
\varphi\left(  x+\xi\right)  -\varphi\left(  x\right)  =\int_{0}^{1}\xi
\cdot\nabla_{x}\varphi\left(  x+t\xi\right)  dt
\]
and thus
\[
0=\int_{0}^{1}dt\int_{\mathbb{R}_{*}^{d}}d\xi\int_{\mathbb{R}_{*}^{d}%
}dx\left[  \xi\cdot\nabla_{x}\varphi\left(  x+t\xi\right)  \right]  K\left(
x,\xi\right)  x_j f\left(  x\right)  f\left(  \xi\right)  +\int_{{\mathbb{R}}%
_{*}^{d}}x_j \eta\left(  x\right)  \varphi\left(  x\right)  dx
\]
Using the change of variables $y=x+t\xi$ in the first integral we obtain 
\begin{align*}
 & 0=\int_{0}^{1}dt\int_{\mathbb{R}_{*}^{d}}d\xi\int_{\left\{ t\xi<y\right\}
}dy\left[  \xi\cdot\nabla_{y}\varphi\left(  y\right)  \right]  K\left(
y-t\xi,\xi\right)  \left(  y-t\xi\right)_j  f\left(  y-t\xi\right)  f\left(
\xi\right) 
\\ & \quad
+\int_{\mathbb{R}_{*}^{d}}x_j\eta\left(  x\right)  \varphi\left(
x\right)  dx\,.
\end{align*}
Applying Fubini's Theorem we obtain 
\begin{align*}
 & 
0=\int_{\mathbb{R}_{*}^{d}}dy\nabla_{y}\varphi\left(  y\right)  \cdot\left[
\int_{0}^{1}dt\int_{\left\{ 0<\xi<\frac{y}{t}\right\} }d\xi\ \xi \left(
y-t\xi\right)_j  K\left(  y-t\xi,\xi\right)  f\left(  y-t\xi\right)  f\left(
\xi\right)  \right] 
\\ & \quad
 +\int_{\mathbb{R}_{*}^{d}}x_j\eta\left(  x\right)
\varphi\left(  x\right)  dx \,.
\end{align*}

The final result can be interpreted in the sense of distributions as a vector equation
\[
\operatorname{div}_x\left(  \int_{0}^{1}dt \int_{\left\{ 0<\xi<\frac{x}%
{t}\right\} } d\xi\, \xi\otimes\left(  x-t\xi\right)  K\left(  x-t\xi,\xi\right)
f\left(  x-t\xi\right)  f\left(  \xi\right)  \right)  =x\eta\left(  x\right)
\]
or in a more detailed manner for each of the coordinates 
\begin{equation}
\operatorname{div}\left(  J_{j}\left(  x\right)  \right)  =x_{j}\eta\left(
x\right) \,, \quad j=1,2,\ldots ,d \,,\label{B4}%
\end{equation}%
where each $J_j$ itself is a vector-valued distribution with 
\begin{equation}
(J_{j})_i\left(  x\right)  =\int_{0}^{1}dt \int_{\left\{ 0<\xi<\frac{x}{t}\right\}
} d\xi\,\xi_i\left(  x_{j}-t\xi_{j}\right)  K\left(  x-t\xi,\xi\right)  f\left(
x-t\xi\right)  f\left(  \xi\right)  \ \ ,\ \ j=1,2,\ldots ,d \,.\label{B5}%
\end{equation}
In the case of constant flux solutions, i.e., in the absence of the source term
$\eta,$ equation (\ref{B4}) becomes 
\begin{equation}
\operatorname{div}\left(  J_{j}\left(  x\right)  \right)
=0\,,\quad j=1,2,\ldots ,d \,.\label{B6}%
\end{equation}
Note that the equations (\ref{B4}), (\ref{B6}) indeed correspond to the conservation
laws associated with the transport of each of the components of the clusters of
the system.

In order to quantify the fluxes of different monomer types which characterize
the solutions of (\ref{B6}) we introduce the following notation. We will
write 
\[
\Sigma_{R}=\left\{  x\in\mathbb{R}_{*}^{d}:\left\vert x\right\vert =R\right\}
\text{ for each }R>0\,.
\]
Note that then $\Sigma_{R}= R \Delta^{d-1}$.  The outward-pointing unit 
vector  $n$, with respect to the simplex $\{x\in \R_+^d:0\le |x|\le R\}$, is given 
at any point of $\Sigma_R$ by 
\[
n=\frac{1}{\sqrt{d}}\left(  1,1,\ldots ,1\right) \,.
\]

Let us for simplicity assume that each $J_j(x)$ is a regular function which satisfies (\ref{B6}) 
and is zero if $x_i\le 0$ for any component $i$.  We integrate  (\ref{B6}) 
over the set $\{x\in \R_+^d:R_1\le |x|\le R_2\}$, for arbitrary $0<R_1<R_2$ and use Stokes' theorem.  This shows that there is $A\in \R^d$ such that 
\begin{equation}
\int_{\Sigma_{R}}\left[  J_{j}\left(  x\right)  \cdot n\right]  dS_{x}%
=A_{j}\,, \quad \text{for all }j=1,2,\ldots ,d,\ R>0 \label{B7}%
\end{equation}
where $dS_{x}$ is the surface area element. It readily follows from (\ref{B5})
that $A_{j}\geq0$ for each $j\in\left\{  1,2,\ldots ,d\right\}$, i.e., $A\in \R_+^d$.
In particular, we find that the flux of monomers of
type $j$ is constant across all the surfaces $\Sigma_{R}$.

In contrast to the case $d=1$, finding $f$ for which equations (\ref{B7}) hold does not imply 
that $f$ satisfies (\ref{B6}). This is due to the fact that in the case $d=1$
the set $\Sigma_{R}$ is just a point for each $R>0.$ If $d>1$ the relation
(\ref{B7}) does not specify the distribution of the fluxes $J_{j}\left(
x\right)  $ in each surface $\Sigma_{R}$ and this distribution must be
obtained from the equations (\ref{B6}).

We prove in \cite{FLNV1} that the solutions
of (\ref{B5}), (\ref{B6}) are Dirac-like measures $f$ supported along a line
$\left\{  x=\lambda b:\lambda>0\right\}  $ for some vector 
$b\in\mathbb{R}_{+}^{d}$ with $\left\vert b\right\vert =1$.
Let us point out that indeed there exist
solutions with that form.  To this end it is convenient to reformulate
(\ref{B6}) in weak form and to change to the coordinate system $\left(
\left\vert x\right\vert ,\theta\right)  $ indicated above.

Taking into account (\ref{B8}), it is natural to define a weak solution of
(\ref{B6}) as a measure $f\in\mathcal{M}\left(  \mathbb{R}_{*}^{d}\right)  $
satisfying 
\begin{equation}
0=\int_{\mathbb{R}_{*}^{d}}d\xi\int_{\mathbb{R}_{*}^{d}}dx\left[
\varphi\left(  x+\xi\right)  -\varphi\left(  x\right)  \right]  K\left(
x,\xi\right) x_j f\left(  x\right)  f\left(  \xi\right)  \label{C1}%
\end{equation}
for each $j=1,2,\ldots ,d$ and every
test function $\varphi\in C^{1}_c\left(  \mathbb{R}_{*}^{d}\right)$  
(see Section \ref{StatDef} for a precise definition of weak solutions).

\subsubsection*{Reformulation of the problem \eqref{C1} using a suitable change of variables} 

It is convenient to rewrite (\ref{C1}) using the new coordinates $\left(  r,\theta\right)$
with $r=|x|>0$ and $\theta = \frac{1}{|x|}x\in \Delta^{d-1}$ for $x\in \R^d_*$.
 The inverse map $\R_*\times \Delta^{d-1}\to \R^d_*$ is given by
\begin{equation}
\label{S1E1}x=r\theta\ \ ,\ \ \ r>0,\ \ \theta\in\Delta^{d-1}.
\end{equation}
We compute the Jacobian of the mapping $x\rightarrow\left(  r,\theta\right)
.$ We use the variables $\theta_{1},\theta_{2},\ldots ,\theta_{d-1}$ to
parametrize the simplex and set then 
\[
\theta_{d}=1-\sum_{j=1}^{d-1}\theta_{j}\,.
\]
Thus, the change of variables is $x\rightarrow\left(  r,\theta_{1},\theta
_{2},\ldots ,\theta_{d-1}\right)$. Therefore, with the above implicit definition of $\theta_d$,
\[
\frac{\partial\left(  x_{1},x_{2},\ldots ,x_{d}\right)  }{\partial\left(
r,\theta_{1},\theta_{2},\ldots ,\theta_{d-1}\right)  }=\left\vert
\begin{array}
[c]{cccccc}%
\theta_{1} & \theta_{2} & \theta_{3} & \ldots  & \theta_{d-1} & \theta_{d}\\
r & 0 & 0 & \ldots  & 0 & -r\\
0 & r & 0 & \ldots  & 0 & -r\\
\ldots  & \ldots  & r & \ldots  & \ldots  & \ldots \\
0 & 0 & 0 & \ldots  & 0 & -r\\
0 & 0 & 0 & 0 & r & -r
\end{array}
\right\vert =D_{d}\left(  r;\theta_{1},\theta_{2},\ldots ,\theta_{d}\right)\,.
\]
We can iterate, developing the determinant by columns. Then 
\[
D_{d}\left(  r;\theta_{1},\theta_{2},\ldots ,\theta_{d}\right)  =\left(
-1\right)  ^{d-1}r^{d-1}\theta_{1}-r D_{d-1}\left(  r;\theta_{2},\theta
_{3},\ldots ,\theta_{d}\right)
\]
Iterating, we arrive to 
\[
D_{d}\left(  r;\theta_{1},\theta_{2},\ldots ,\theta_{d}\right)  =\left(
-1\right)  ^{d-1}r^{d-1}\left(  \theta_{1}+\theta_{2}+\ldots +\theta_{d}\right)
=\left(  -r\right)^{d-1}%
\]
Then 
\[
dx=\left\vert \frac{\partial\left(  x_{1},x_{2},\ldots ,x_{d}\right)  }%
{\partial\left(  r,\theta_{1},\theta_{2},\ldots ,\theta_{d-1}\right)  }\right\vert
drd\theta_{1}d\theta_{2}\ldots d\theta_{d-1}=r^{d-1}drd\theta_{1}d\theta
_{2}\ldots d\theta_{d-1}\,.
\]

We can write $d\theta_{1}d\theta_{2}\ldots d\theta_{d-1}$ in terms of the area
element of the simplex. We just use 
\[
dS\left(  \theta\right)  =\sqrt{1+\left(  \nabla_{\theta}h\right)  ^{2}%
}d\theta_{1}d\theta_{2}\ldots d\theta_{d-1}\,.%
\]
with $\theta_{d}=h\left(  \theta_{1},\theta_{2},\ldots ,\theta_{d-1}\right)$ where $h\left(  \theta_{1},\theta_{2},\ldots ,\theta_{d-1}\right)=1-\sum_{j=1}^{d-1}\theta_{j}$. We
will denote the element of area of the simplex as $d\tau\left(  \theta\right)
$. Explicitly, 
\[
d\tau\left(  \theta\right)  =\sqrt{d}\, d\theta_{1}d\theta_{2}\ldots d\theta_{d-1}%
\]
Thus,
\begin{equation}
dx=\frac{r^{d-1}}{\sqrt{d}}drd\tau\left(  \theta\right) \,. \label{Jac}%
\end{equation}

We can now rewrite (\ref{C1}) using the above results.  
Suppose that $x=r\theta$ and $\xi=\rho\sigma.$ We then have $\left\vert
x+\xi\right\vert =r+\rho.$ On the other hand, 
\[
\frac{x+\xi}{\left\vert x+\xi\right\vert }=\frac{r}{r+\rho}\theta+\frac{\rho
}{r+\rho}\sigma\,.
\]
We now rewrite the coagulation kernel in this set of variables as
\begin{equation}
G\left(r,\rho;\theta,\sigma\right)= K\left(r\theta,\rho\sigma\right)\, .  \label{G1}%
\end{equation}%
We also rewrite the measure $f$ in terms of    
the measure $F\in \mathcal{M}_+(\R_*\times \Delta^{d-1})$ which is
defined as
\begin{equation}
\int \psi(r,\theta)\frac{r^{d-1}}{\sqrt{d}} F(r,\theta)drd\tau\left(  \theta\right) 
=\int \psi(|x|,x/|x|) f(x) dx\,, \label{G2}
\end{equation}
for a given test function $\psi\in  C_c(\R_*\times \Delta^{d-1})$.  Notice that  if $f$ is absolutely continuous with a smooth density, both sides of \eqref{G2} are the same as it can be seen using an elementary change of variables. 
Then (\ref{C1}) can be equivalently written as 
\begin{align}
&  \int_{0}^{\infty}r^{d}dr\int_{0}^{\infty}\rho^{d-1}d\rho\int_{\Delta^{d-1}%
}d\tau\left(  \theta\right)  \int_{\Delta^{d-1}}d\tau\left(  \sigma\right)
G\left(  r,\rho;\theta,\sigma\right)  \nonumber\\
&  \qquad\quad\times \left[  \psi\left(  r+\rho,\frac{r}{r+\rho}\theta+\frac{\rho
}{r+\rho}\sigma\right)  -\psi\left(  r,\theta\right)  \right]  \theta_j F\left(
r,\theta\right)  F\left(  \rho,\sigma\right)  =0\,,\label{C2}%
\end{align}
for all $j=1,2,\ldots ,d$ and $\psi\in C_c^{1}(\R_*\times \Delta^{d-1})$.  Notice that in any open bounded set of $\R^d_*$ the change of variables \eqref{S1E1} defines a diffeomorphism. 

We observe that the change of variables \eqref{G2}  can be understood rigorously 
via the Riesz--Markov--Kakutani theorem applied to the linear functional
\[
\varphi\mapsto  \int_{\R^d_*} \frac{\sqrt{d}}{|x|^{d-1}}
\varphi(|x|,x/|x|) f(x) dx\,, \qquad \varphi \in C_c(\R_*\times \Delta^{d-1})\, 
\]
which allows to define a measure $F$.   
Furthermore, if $f$ satisfies the assumptions in 
Definition \ref{StatInjDef}, we have that $F$ is supported in $[1,\infty)\times \Delta^{d-1}$ and 
\[
 \int_{\R_*\times \Delta^{d-1}} r^{d-1+\gamma+p} F(r,\theta)dr d\tau(\theta)
<\infty \,.
\]

\subsubsection*{A family of weighted Dirac-$\delta$ solutions}

Suppose that $K$ is continuous and homogeneous with homogeneity $\gamma.$ If
the kernel $K$ satisfies (\ref{Ineq2}), (\ref{Ineq3}) with $\gamma+2p<1$, we claim that 
we
then have a family of solutions of (\ref{C2}) given by the following weighted Dirac 
$\delta$-measures
\begin{equation}
F\left(  r,\theta\right)  =\frac{C_{0}}{r^{\frac{\gamma
+1}{2}+d}}\delta\left(  \theta-\theta_{0}\right)  ,\quad C_{0}>0\,, \label{C4}%
\end{equation}
where $\theta_{0}\in \Delta^{d-1}$ is fixed but arbitrary.
To see this, first note that  $\frac{r}{r+\rho}\theta_{0}+\frac{\rho}{r+\rho}\theta
_{0}=\theta_{0}$, and thus then (\ref{C2}) is equivalent to 
\begin{equation}
( \theta_{0})_j
\int_{0}^{\infty}r^{d}dr\int_{0}^{\infty}\rho^{d-1}d\rho\frac{G\left(
r,\rho;\theta_{0},\theta_{0}\right) }{r^{\frac{\gamma+1}{2}+d}%
\rho^{\frac{\gamma+1}{2}+d}}\left[  \psi\left(  r+\rho,\theta_{0}\right)
-\psi\left(  r,\theta_{0}\right)  \right]  =0.\label{eq:C2bis}%
\end{equation}
Notice that the integral in \eqref{eq:C2bis} is well defined for $\psi\in
C_{c}^{1}(\R_*\times \Delta^{d-1})$ and $\gamma+2p<1$.

We now rewrite \eqref{eq:C2bis} in a more convenient form. 
First, since $\theta_0\in \Delta^{d-1}$, there is at least one $j$ such that $(\theta_0)_j>0$.
Thus the factor $(\theta_0)_j$  may be dropped from  \eqref{eq:C2bis}.
Then for any $\psi\in C_{c}^{1}(\R_*\times \Delta^{d-1})$ we may employ inside the integrand 
the identity
\[
\psi\left(  r+\rho,\theta_{0}\right)  -\psi\left(  r,\theta_{0}\right)
=\int_{r}^{r+\rho}\frac{\partial\psi}{\partial t}\left(  t,\theta_{0}\right)
dt
=\int_{a}^{b} \cf{t\ge r} \cf{t< r+\rho}
\frac{\partial\psi}{\partial t}\left(  t,\theta_{0}\right)
dt.
\]
where $0<a<b$ are such that the support of $\psi$ lies in $[a,b]\times \Delta^{d-1}$.
Therefore, applying Fubini's Theorem, \eqref{eq:C2bis} is seen to be equivalent with
\[
\int_{a}^{b}dt\frac{\partial\psi}{\partial t}\left(
t,\theta_{0}\right)  \int_{0}^{t}r^{d}dr\int_{t-r}^{\infty}\rho^{d-1}%
d\rho\frac{G\left(  r,\rho;\theta_{0},\theta_{0}\right)  }{r^{\frac{\gamma
+1}{2}+d}\rho^{\frac{\gamma+1}{2}+d}}=0\,.
\]
Integrating by parts we obtain 
\begin{equation}
\int_{0}^{\infty}dt \, \psi(  t,\theta_{0})  \frac{\partial
}{\partial t}\left(  \int_{0}^{t}r^{d}dr\int_{t-r}^{\infty}\rho^{d-1}%
d\rho\frac{G\left(  r,\rho;\theta_{0},\theta_{0}\right)  }{r^{\frac{\gamma
+1}{2}+d}\rho^{\frac{\gamma+1}{2}+d}}\right)  =0\,.\label{C3}%
\end{equation}

Since this needs to hold for all allowed $\psi$, we find that it is valid if and only if
there is $J_0\ge 0$ such that
\begin{equation}
J_0= \int_{0}^{t}r^{d}dr\int_{t-r}^{\infty}\rho^{d-1}%
d\rho\frac{G\left(  r,\rho;\theta_{0},\theta_{0}\right)  }{r^{\frac{\gamma
+1}{2}+d}\rho^{\frac{\gamma+1}{2}+d}}\,, \quad \text{ for all }t>0 \,.\label{C3bis}%
\end{equation}
This equation is indeed satisfied if $K$ is a homogeneous kernel, since then $G\left(  \lambda r,\lambda
\rho;\theta_{0},\theta_{0}\right)  =\lambda^{\gamma}G\left(  r,\rho;\theta
_{0},\theta_{0}\right)$ for each $\lambda>0$, and thus
\begin{align*}
&  \int_{0}^{t}r^{d}dr\int_{t-r}^{\infty}\rho^{d-1}d\rho\frac{G\left(
r,\rho;\theta_{0},\theta_{0}\right)  }{r^{\frac{\gamma+1}{2}+d}\rho
^{\frac{\gamma+1}{2}+d}}\\
&  =\frac{t^{2d+1}}{t^{\gamma+1+2d}}\int_{0}^{1}r^{d}dr\int_{1-r}^{\infty}%
\rho^{d-1}d\rho\frac{G\left(  tr,t\rho;\theta_{0},\theta_{0}\right)
}{r^{\frac{\gamma+1}{2}+d}\rho^{\frac{\gamma+1}{2}+d}}\\
&  =\int_{0}^{1}r^{d}dr\int_{1-r}^{\infty}\rho^{d-1}d\rho\frac{G\left(
r,\rho;\theta_{0},\theta_{0}\right)  }{r^{\frac{\gamma+1}{2}+d}\rho
^{\frac{\gamma+1}{2}+d}}%
\end{align*}
is constant.

Using the same procedure to simplify the general case
(\ref{C2}) yields an alternative way of prescribing the fluxes
through the surfaces $\left\{  \left\vert x\right\vert =R\right\}$. This
will be made precise in Section \ref{StatDef} (cf. Definitions
\ref{DefConstFlux}).

\subsubsection*{Stationary solutions with a prescribed concentration of monomers}

As a third possibility used in the literature to obtain stationarity
of solutions to coagulation equation, let us briefly mention 
using, instead of sources, boundary conditions to fix the concentration of monomers
to some given value in 
(\ref{B2Stat}).  
The one-component case has already been considered in \cite{FLNV}, but 
the definition becomes more
involved here due to the fact that we have a multicomponent system.

Explicitly, we would then be interested in solutions of 
\begin{equation}
0=\frac{1}{2}\sum_{\beta<\alpha}K_{\alpha-\beta,\beta}n_{\alpha-\beta}%
n_{\beta}-n_{\alpha}\sum_{\beta>0}K_{\alpha,\beta}n_{\beta}\ \ ,\ \ \left\vert
\alpha\right\vert \notin\left\{  0,1\right\}  \label{B3Stat}%
\end{equation}
We will say that $\left\{  n_{\alpha}\right\}  _{\alpha\in\mathbb{N}_{\ast
}^{d}\setminus\left\{  O\right\}  }$ is a \textit{stationary solution of
(\ref{B3Stat}) with a prescribed concentration of monomers}, if it solves
(\ref{B3Stat}) and in addition it satisfies 
\begin{equation}
\sum_{\left\{  \alpha:\alpha_{j}=1\right\}  }n_{\alpha}=c_{j}\ \ \text{for
each }j=1,2,3,\ldots ,d \label{B4Stat}%
\end{equation}
for a given set of concentrations $\left\{  c_{j}\right\}  _{j=1}^{d} \in \R_+^d.$

The existence of solutions of the problem (\ref{B3Stat}), (\ref{B4Stat}) for a
given set of concentrations $\left\{  c_{j}\right\}  _{j=1}^{d}$ is not
evident at all. If we have an injection solution to (\ref{B2Stat}) for a set
of sources $\left\{  s_{\alpha}\right\}  _{\left\{  \left\vert \alpha
\right\vert =1\right\}  },$ then we have a solution of (\ref{B3Stat}),
(\ref{B4Stat}) for the corresponding values of $c_{j}$ obtained by means of
the sum (\ref{B4Stat}). However, there is not any reason to expect that any
set of concentrations $\left\{  c_{j}\right\}  _{j=1}^{d}$ could be obtained
by means of a suitable choice of sources $\left\{  s_{\alpha}\right\}
_{\left\{  \left\vert \alpha\right\vert =1\right\}  }.$ It has been seen in
\cite{FLNV} that in the case $d=1$ the problem (\ref{B3Stat})--(\ref{B4Stat})
can be solved if $\left\vert \gamma+2\lambda\right\vert <1$ and kernels with
the form \eqref{PrevIned1}, \eqref{PrevIneq2}.

\subsection{Rigorous definition of the classes of steady state
solutions\label{StatDef}}

We define now in a precise mathematical way the solutions that we will
consider in this paper.  

\begin{definition}\label{StatInjDef}
Let $\eta\in\mathcal{M}_{+,b}\left(  \mathbb{R}_{*}^{d}\right)  $
with support contained in  $\defset{x\in \R^d_*}{1\le |x|\le L}$ 
for some $L>1.$ Suppose that the coagulation kernel $K$ is continuous and satisfies (\ref{Ineq2}),
(\ref{Ineq3}). We say that $f\in\mathcal{M}_{+}\left(  \mathbb{R}_{*}%
^{d}\right)  $ is a stationary injection solution to (\ref{eq:contStat}) if
the support of $f$ is contained in $\defset{x\in \R^d_*}{|x|\ge 1}$ and $f$ satisfies
\begin{equation}
\int_{\R^d_*} |x|^{\gamma+p} f(dx)<\infty ,\label{S1E4}
\end{equation}
as well as 
\begin{equation}
0=\frac{1}{2}\int_{{{{\mathbb{R}}}_{*}^{d}}}\int_{{{{\mathbb{R}}}_{*}^{d}}%
}K\left(  x,y\right)  \left[  \varphi\left(  x+y\right)  -\varphi\left(
x\right)  -\varphi\left(  y\right)  \right]  f\left(  dx\right)  f\left(
dy\right)  +\int_{{{{\mathbb{R}}}_{*}^{d}}}\varphi\left(  x\right)
\eta\left(  dx\right)  \label{WeakForm1}%
\end{equation}
for any test function 
$\varphi\in C^{1}_c\left(  {{{\mathbb{R}}}_{*}^{d}}\right)$.
\end{definition}

We recall the notation $\R^d_*:=\R^d_{+}\setminus \{O\}$ and that $|x|=\sum_j |x_j|$ 
denotes the $\ell_1$-norm in $\R^d_*$.

Stationary injection solutions for the discrete equation
(\ref{B2Stat}) can be 
considered as solutions $f$ of (\ref{eq:contStat}) with $f$  supported on the
elements of $\mathbb{N}_{*}^{d}=\N_0^d\setminus\{O\}$ by using Dirac-$\delta$ measures as explained next.
Let the sequence
$\left\{  n_{\alpha}\right\}  _{\alpha\in\mathbb{N}_{*}^{d}}$ be a solution of
(\ref{B2Stat}), i.e., it satisfies
\begin{equation}
0=\frac{1}{2}\sum_{\beta<\alpha}K_{\alpha-\beta,\beta}n_{\alpha-\beta}%
n_{\beta}-n_{\alpha}\sum_{\beta>0}K_{\alpha,\beta}n_{\beta}+s_{\alpha
}\,, \qquad \alpha\in\mathbb{N}_{*}^{d} \,, \label{S1E5}%
\end{equation}
where the source satisfies $s_\alpha=0$ whenever $|\alpha|>L$  for some $L>1$.
We then define 
\begin{equation}
f\left(  x\right)  =\sum_{\alpha\in\mathbb{N}_{*}^{d}}n_{\alpha}\delta\left(
x-\alpha\right)  \label{S1E2}%
\end{equation}
as well as 
\begin{equation}
\eta\left(  x\right)  =\sum_{\alpha\in\mathbb{N}_{*}^{d}}s_{\alpha}%
\delta\left(  x-\alpha\right) \,. \label{S1E3}%
\end{equation}
Then $\eta$ satisfies 
the assumptions of Definition
\ref{StatInjDef} with the same parameter $L$ and we can  define a solution of (\ref{S1E5}) as follows.
\begin{definition}
\label{DefStatInjDis}Suppose that $\left\{  s_{\alpha}\right\}  _{\alpha
\in\mathbb{N}_{*}^{d}  }$ is a non-negative sequence supported
in a finite collection of values $\alpha.$ A sequence $\left\{
n_{\alpha}\right\}  _{\alpha\in\mathbb{N}_{*}^{d}
}$, with $n_{\alpha}\geq0$ for $\alpha\in\mathbb{N}_{*}^{d}$, is a stationary injection solution of (\ref{S1E5}) if
\[
 \sum_{\alpha\in\mathbb{N}_{*}^{d}} |\alpha|^{\gamma+p}n_{\alpha} < \infty\,, 
\]
and
the measure $f\in\mathcal{M}_{+}\left(  \mathbb{R}_{*}^{d}\right)  $ defined
as in (\ref{S1E2}) solves (\ref{eq:contStat}) in the sense of Definition
\ref{StatInjDef}.
\end{definition}
Thanks to the assumptions on $n_\alpha$ the measure
$f$ has its support in $\defset{x\in \R^d_*}{|x|\ge 1}$ and satisfies (\ref{S1E4}). 

\medskip

We now provide a rigorous definition of the constant flux to
the equation (\ref{eq:contStat}) with $\eta=0$, namely 
\begin{equation}
0=\frac{1}{2}\int_{\left\{ 0<y<x\right\} }K\left(  x-y,y\right)  f\left(
x-y,t\right)  f\left(  y,t\right)  dy-\int_{\mathbb{R}_{*}^{d} }K\left(
x,y\right)  f\left(  x,t\right)  f\left(  y,t\right)  dy\,. \label{S1E7}%
\end{equation}

\begin{definition}\label{DefConstFlux}
Suppose that the coagulation kernel $K$ is continuous and satisfies
(\ref{Ineq2}), (\ref{Ineq3}). 
We say that $f\in\mathcal{M}\left(
\mathbb{R}_{*}^{d}\right)  $ is a stationary solution to (\ref{S1E7}) 
if 
\begin{equation*}
\int_{\defset{x\in \R^d_*}{|x|\ge 1}} |x|^{\gamma+p} f(dx)
+
\int_{\defset{x\in \R^d_*}{|x| < 1}} |x|^{1-p} f(dx)
<\infty\label{cond_constFlux}
\end{equation*}
is satisfied and 
(\ref{WeakForm1}) holds with $\eta=0$,  
for every test function 
$\varphi\in C^{1}_c\left(  {{{\mathbb{R}}}_{*}^{d}}\right)$.

We define the \defem{total flux across
the surface $\left\{  \left\vert x\right\vert =R\right\}  $}
as the vector-valued function $A(R)\in \R_+^d$, $R>0$, defined by means of 
\begin{equation}
A_j(R) =  \frac{1}{d}
\int_{(0,R]\times \Delta^{d-1}}dr d\tau\left(  \theta\right)\int_{(R-r,\infty)\times \Delta^{d-1}} d\rho d\tau\left(  \sigma\right)
r^{d}\rho^{d-1}
F\left(  r,\theta\right)  F\left(  \rho,\sigma\right)  \theta_j G\left(
r,\rho;\theta,\sigma\right) \,,\label{S1E8}
\end{equation}
where the function $G$ is as in (\ref{G1}) 
and the measure $F$ has been defined using (\ref{G2}). 
We say that $f$ is a \defem{non-trivial constant flux solution} of (\ref{S1E7}) if it is a stationary solution and there is  
$J_0>O$ 
such that $A(R)=J_0$ for all  $R>0$.
\end{definition}

\begin{remark}
The above definition of flux, (\ref{S1E8}), is obtained by a similar computation as 
leading to the special case in (\ref{C3bis}) with the additional assumption that the test-function is constant in the simplex-variable $\theta$; the details of this argument may be found in the proof of Theorem \ref{ThmExistCont}. 
Note that in the one-component case to impose that the fluxes are constant,
i.e. \eqref{S1E8}, implies that $f$ is a solution to the coagulation equation.
This does not automatically happen in the multicomponent case and this explains why we need
to further assume \eqref{WeakForm1} here.
\end{remark}

\section{Main results} 
\label{sec:4}

We state in this Section the 
main results proven in this paper. They can be thought as a  natural extensions from one-~to multi-component systems of 
the existence and non-existence of stationary
injection solutions to (\ref{eq:contStat}) and (\ref{S1E5}),
as well as of the constant
flux solutions to (\ref{S1E7}), which have been considered in
\cite{FLNV} for the one-component case. 

We first describe the existence results for
the injection solutions:
\begin{theorem}
\label{ThmExistCont}
Suppose that the coagulation kernel $K$ is a continuous symmetric function that satisfies
(\ref{Ineq2}), (\ref{Ineq3}) with $\gamma+2p<1.$ Suppose that 
$\eta\in\mathcal{M}_{+,b}\left(  \mathbb{R}_{*}^{d}\right)$ has 
its support in the set $\defset{x\in \R^d_*}{1\le |x|\le L}$ for some $L>1$. 
Then, there exists a stationary injection solution $f\in\mathcal{M}%
_{+}\left(  \mathbb{R}_{*}^{d}\right)  $ to (\ref{eq:contStat}) in the sense
of Definition \ref{StatInjDef}.
\end{theorem}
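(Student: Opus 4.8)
The plan is to construct the stationary injection solution $f$ by a compactness argument applied to a family of regularized, truncated problems. First I would fix a large parameter $n$ and consider the coagulation equation restricted to the bounded region $\{x\in\R^d_*: |x|\le n\}$, replacing the coagulation gain/loss terms by versions that remove mass flowing past size $n$ (equivalently, adding a sink of strength proportional to the rate at which clusters of size near $n$ coagulate). On such a bounded domain the nonlinearity is bounded and one expects to produce a non-negative stationary solution $f_n$ — concentrated on $\{|x|\ge 1\}$ because the source $\eta$ sits there and coagulation only moves mass upward — either by a fixed-point/degree argument for the stationary equation or by solving the time-dependent truncated equation and extracting a stationary state as a long-time limit (the time-dependent route is cleaner since the truncated evolution is globally well-posed and dissipative). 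The key a priori bound is obtained by testing the weak form \eqref{WeakForm1} (for the truncated problem) with $\varphi(x)=|x|^{\gamma+p}$, suitably cut off: the quadratic term has a favourable sign on the relevant range because the subadditivity/superadditivity properties of $s\mapsto s^{\gamma+p}$ combine with the kernel bounds \eqref{Ineq2}--\eqref{Ineq3}, and $\gamma+2p<1$ is precisely the condition that makes the gain from coagulation controllable by the source $\int |x|^{\gamma+p}\eta(dx)<\infty$. This yields $\sup_n \int_{\R^d_*}|x|^{\gamma+p} f_n(dx)<\infty$, uniformly in the truncation.

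Next I would pass to the limit $n\to\infty$. The uniform moment bound, together with the fact that all $f_n$ are supported in $\{|x|\ge 1\}$ (so there is no loss of mass at the origin) gives tightness on compact subsets of $\R^d_*$; by Banach--Alaoglu / Helly-type selection we extract a subsequence converging vaguely to some $f\in\mathcal M_+(\R^d_*)$ with $\int |x|^{\gamma+p} f(dx)<\infty$ and $\supp f\subset\{|x|\ge 1\}$. For a fixed test function $\varphi\in C^1_c(\R^d_*)$, whose support lies in $\{a\le|x|\le b\}$ for some $0<a<b$, the bilinear functional
\[
(f,g)\mapsto \tfrac12\int\!\!\int K(x,y)\big[\varphi(x+y)-\varphi(x)-\varphi(y)\big] f(dx) g(dy)
\]
has an integrand supported, in each variable, in a bounded region of $\R^d_*$ (either $x\in\supp\varphi$, or $y\in\supp\varphi$, or $x+y\in\supp\varphi$ which forces $|x|,|y|\le b$), and on that region $K$ is continuous hence bounded and the bracket is bounded; the tails are controlled by the $|x|^{\gamma+p}$-moment since $\gamma+2p<1$ makes the excess weight integrable against the kernel. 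This is enough to pass the truncated weak equation to the limit and recover \eqref{WeakForm1}, while the truncation sink term vanishes in the limit by the same tail estimate. Thus $f$ is a stationary injection solution in the sense of Definition \ref{StatInjDef}.

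The main obstacle I expect is twofold. First, the a priori estimate is delicate: with a general kernel bounded only through \eqref{Ineq2}--\eqref{Ineq3}, the test function $|x|^{\gamma+p}$ is not exactly sub/superadditive, so one must split the $(r,\rho)$-integration into the comparable region $r\sim\rho$ and the unbalanced regions $r\ll\rho$ or $\rho\ll r$, estimate $\Phi$ near the endpoints where it behaves like $s^{-p}$, and check that the exponent $\gamma+2p<1$ makes every resulting integral converge — this is the multicomponent analogue of the one-component computation in \cite{FLNV} and is where the quantitative heart of the argument lies. Second, establishing that the truncated problems actually possess non-negative stationary solutions with mass supported above $1$, and that the relevant limit is non-trivial (i.e.\ not the zero measure), requires a careful lower bound; here one uses that $\eta\ne 0$ and that the total ``mass'' $\int|x|\,f_n(dx)$ is pinned by a mass-balance identity (testing with a cutoff of $|x|$) so that mass cannot simply escape. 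Once these two points are handled, the compactness and limit passage are routine. I would also remark that, by the identification \eqref{S1E2}--\eqref{S1E3}, the same construction applied to $\eta=\sum_\alpha s_\alpha\delta(\cdot-\alpha)$ yields the discrete statement of Definition \ref{DefStatInjDis}, provided one checks that the constructed $f$ remains supported on $\N^d_*$ — which follows because the truncated discrete evolution preserves lattice support.
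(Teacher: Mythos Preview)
Your overall architecture---truncate, produce a stationary solution for the truncated problem via the time-dependent evolution plus a fixed-point argument, derive a uniform moment bound, and pass to the limit---is exactly the paper's strategy. The real issue is the a priori estimate. Testing with $\varphi(x)=|x|^{\gamma+p}$ as you propose does \emph{not} give the moment bound for general $p$: the kernel lower bound reads $K(x,y)\ge c_1(|x|+|y|)^{\gamma+2p}|x|^{-p}|y|^{-p}$, and when you multiply this by the bracket $|x|^{\gamma+p}+|y|^{\gamma+p}-|x+y|^{\gamma+p}$ (assuming it has a sign at all, which requires $\gamma+p<1$, a condition not implied by $\gamma+2p<1$ when $p<0$) you obtain, for $|x|\le |y|$, a lower bound of order $|y|^{\gamma+p}|x|^{\gamma}$ rather than $|x|^{\gamma+p}|y|^{\gamma+p}$. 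So the quadratic form does not dominate $(\int|x|^{\gamma+p}f)^2$ unless $p=0$. The paper first performs the reduction to $p=0$ via the substitution $h(x)=|x|^{-p}f(x)$ (Lemma~\ref{EquivDefin}), which converts the kernel to one satisfying $c_1(|x|+|y|)^{\gamma+2p}\le \tilde K\le c_2(|x|+|y|)^{\gamma+2p}$; only then does the bilinear structure become tractable.

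Even after that reduction, the paper does \emph{not} use your moment test. Instead it tests with $\varphi(x)=|x|\chi_\delta(|x|)$ where $\chi_\delta$ is a cutoff of $\mathbf{1}_{[1,z]}$, and lets $\delta\to 0$ to obtain the flux identity
\[
\frac{1}{d}\int_{(0,z]}r^{d}dr\int_{(z-r,\infty)}\rho^{d-1}d\rho\iint G(r,\rho;\theta,\sigma)F(r,\theta)F(\rho,\sigma)\,d\tau(\theta)d\tau(\sigma)=\int_{\{|x|\le z\}}|x|\,\eta(dx)\,.
\]
Since $[2z/3,z]^2$ sits inside the integration region and $G\ge c_1 z^\gamma$ there, this yields the pointwise bound $\frac{1}{z}\int_{[2z/3,z]\times\Delta^{d-1}}r^{d-1}F\,dr\,d\tau\le C z^{-(\gamma+3)/2}$, which is then integrated via Lemma~\ref{lem:bound} to give the $|x|^\gamma$-moment. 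This flux route is what makes the condition $\gamma+2p<1$ appear naturally and is also what the paper needs to control the loss term $\int\!\!\int K(x,y)\varphi(x)f(dx)f(dy)$ in the limit---note that this term has $x$ compactly supported but $y$ \emph{unbounded}, contrary to what you write, and its convergence requires precisely the pointwise decay just described. Your proposal is missing the flux identity, which is the main analytic idea here.
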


The following theorem is a corollary of the previous result assuming that $f$ is supported on the set  $\mathbb{N}_{*}^{d}$. However, since the discrete coagulation equation has an independent interest and it is relevant for applications, we formulate the result as a separate theorem. 

\begin{theorem}
\label{ThmExistDisc}
Suppose that the coagulation kernel $K$ is symmetric and satisfies (\ref{Ineq1}), (\ref{Ineq3})
with $\gamma+2p<1.$ Let $\left\{  s_{\alpha}\right\}  _{\alpha\in
\mathbb{N}_{*}^{d}  }$ be a non-negative sequence
supported on a finite number of values $\alpha.$ Then, there exists a
stationary injection solution $\left\{  n_{\alpha}\right\}  _{\alpha
\in\mathbb{N}_{*}^{d}  }$ to (\ref{S1E5}) in the
sense of Definition \ref{DefStatInjDis}.
\end{theorem}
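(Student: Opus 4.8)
The plan is to deduce Theorem \ref{ThmExistDisc} from Theorem \ref{ThmExistCont} by the standard device of representing the discrete system as a measure-valued solution of the continuous equation supported on the lattice $\mathbb{N}_*^d$. First I would define $\eta$ as the atomic measure $\eta(dx)=\sum_{\alpha\in\mathbb{N}_*^d} s_\alpha\,\delta(x-\alpha)$ as in \eqref{S1E3}; since $\{s_\alpha\}$ is non-negative and supported on a finite set of values, $\eta\in\mathcal{M}_{+,b}(\mathbb{R}_*^d)$ and its support is contained in $\defset{x\in\R^d_*}{1\le|x|\le L}$ for $L:=\max\{|\alpha|:s_\alpha>0\}$ (note $|\alpha|\ge 1$ for $\alpha\in\mathbb{N}_*^d$; if the finite support forces $L=1$ we simply enlarge $L$ slightly, which does not affect the hypotheses). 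Here we must also use that the discrete kernel bound \eqref{Ineq1} together with \eqref{Ineq3} lets us \emph{define} a continuous symmetric kernel $K(x,y)$ on $\R^d_*\times\R^d_*$ satisfying \eqref{Ineq2}, \eqref{Ineq3} whose values at integer points $K(\alpha,\beta)$ agree with (or at least are comparable to) the given $K_{\alpha,\beta}$; the cleanest choice is to take the right-hand side expression $c(|x|+|y|)^\gamma\Phi(|x|/(|x|+|y|))$ itself as $K$, or—if one wants exact agreement of the discrete dynamics—to interpolate $K_{\alpha,\beta}$ continuously, which is possible precisely because the two-sided bounds \eqref{Ineq1} pin $K_{\alpha,\beta}$ between two continuous functions of the same form. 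Either way, Theorem \ref{ThmExistCont} applies to this $K$ and $\eta$, producing a stationary injection solution $f\in\mathcal{M}_+(\mathbb{R}_*^d)$ with support in $\{|x|\ge1\}$ and $\int_{\R^d_*}|x|^{\gamma+p} f(dx)<\infty$.

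The second step is to show that this $f$ is necessarily atomic and supported on $\mathbb{N}_*^d$, so that writing $f=\sum_{\alpha}n_\alpha\delta(\cdot-\alpha)$ with $n_\alpha\ge 0$ makes sense, and that the resulting $\{n_\alpha\}$ solves the discrete stationary equation \eqref{S1E5}. The support claim follows from an induction on $|\alpha|$ familiar from discrete coagulation theory: testing \eqref{WeakForm1} against functions $\varphi$ supported near a point $x_0\notin\mathbb{N}_*^d$, one uses that the gain term $\frac12\iint K(x,y)\varphi(x+y)f(dx)f(dy)$ only produces mass at points $x+y$ with $x,y$ in $\operatorname{supp} f$, while the source $\eta$ sits on lattice points and the loss term is absolutely continuous with respect to $f$ itself; starting from the fact that the smallest-$|\alpha|$ mass must be fed directly by $\eta$ (hence lattice-supported), one shells outward by $|\alpha|=1,2,\dots$ to conclude $\operatorname{supp} f\subseteq\mathbb{N}_*^d$. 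Once this is known, specializing the test functions $\varphi\in C^1_c(\R^d_*)$ to ones that equal $1$ near a single lattice point $\alpha$ and vanish near all others — permissible because $\mathbb{N}_*^d$ is discrete and the relevant sums are locally finite thanks to the moment bound and the lower bound in \eqref{Ineq1} — turns the weak identity \eqref{WeakForm1} into precisely \eqref{S1E5} at $\alpha$; the moment bound $\int|x|^{\gamma+p}f(dx)<\infty$ becomes $\sum_\alpha|\alpha|^{\gamma+p}n_\alpha<\infty$, so $\{n_\alpha\}$ satisfies Definition \ref{DefStatInjDis}.

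I expect the main obstacle to be the bookkeeping needed to justify passing from the weak formulation \eqref{WeakForm1} back to the pointwise discrete equation \eqref{S1E5}: one must verify that for each fixed $\alpha$ both series $\sum_{\beta<\alpha}K_{\alpha-\beta,\beta}n_{\alpha-\beta}n_\beta$ and $n_\alpha\sum_{\beta>0}K_{\alpha,\beta}n_\beta$ converge absolutely, so that the rearrangement implicit in choosing a bump test function is legitimate, and that no mass "leaks to infinity" in the test-function limit. Absolute convergence of the gain term is immediate (it is a finite sum over $\beta<\alpha$), but the loss term requires the bound $\sum_\beta K_{\alpha,\beta}n_\beta<\infty$, which one gets from \eqref{Ineq1}, \eqref{Ineq3}: $K_{\alpha,\beta}\le c_2(|\alpha|+|\beta|)^\gamma\Phi(\cdot)\le C_\alpha(1+|\beta|^{\gamma})|\beta|^{-p}$ for $|\beta|$ large (the $\Phi$-factor contributes at most $|\beta|^{-p}$ on the relevant branch), and $\sum_\beta |\beta|^{\gamma}|\beta|^{-p} n_\beta \lesssim \sum_\beta |\beta|^{\gamma+p} n_\beta<\infty$ provided $-p\le p$, i.e. $p\ge 0$; for $p<0$ one has $\gamma+2p<1$ forcing $\gamma<1$ and the bound is even easier since then $\Phi$ is bounded. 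Handling the regime $p<0$ and the small-$|\beta|$ contributions (where $\Phi$ blows up) cleanly, and making the induction on shells rigorous when $K_{\alpha-\beta,\beta}$ can be large for $\beta$ near $O$, is the delicate part; everything else is routine once those estimates are in place.
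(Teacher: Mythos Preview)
Your approach is correct but takes a genuinely different route from the paper. The paper does \emph{not} invoke Theorem \ref{ThmExistCont} as a black box; instead it reruns the truncation and Schauder fixed-point argument directly in the discrete setting (the truncated ODE system \eqref{DiscTrunc}), obtains a discrete stationary solution $\{n_\alpha^{\varepsilon,M}\}$ for each cutoff, and only then embeds these as atomic measures in $\R^d_*$ to borrow the uniform estimate \eqref{A1N} needed to pass to the limits $M\to\infty$, $\varepsilon\to 0$. In particular the paper never needs to argue that a continuous solution is lattice-supported, because its solutions are lattice-supported by construction at every stage.

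Your route is more economical in that it avoids repeating the fixed-point machinery, but it trades that for two extra ingredients: (a) a genuine continuous symmetric extension of $K_{\alpha,\beta}$ to all of $\R^d_*\times\R^d_*$ that agrees at lattice points and still satisfies \eqref{Ineq2}--\eqref{Ineq3} (your first option, taking the bounding function itself, is not enough since it would solve the wrong discrete equation; the interpolation option works, e.g.\ by Tietze-extending the bounded ratio $K_{\alpha,\beta}/[(|\alpha|+|\beta|)^\gamma\Phi]$ and symmetrizing); and (b) the shell-by-shell support argument, which is valid because $f$ is supported in $\{|x|\ge 1\}$, the gain term at level $|x|<n+1$ only involves $f$ at levels $<n$, and the strictly positive lower bound on $K$ forces the loss term to kill any off-lattice mass once gain and source vanish there. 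One small slip: for fixed $\alpha$ and large $|\beta|$ the factor $\Phi$ contributes like $(|\beta|/|\alpha|)^{p}$, not $|\beta|^{-p}$, so $K_{\alpha,\beta}\lesssim_\alpha |\beta|^{\gamma+p}$ directly and the loss sum is controlled by the moment condition $\sum_\beta |\beta|^{\gamma+p}n_\beta<\infty$ for every sign of $p$; your case split on $p\ge 0$ versus $p<0$ is unnecessary.
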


\begin{remark}
Notice that the assumptions on the kernels $K$ in Theorems \ref{ThmExistCont},
\ref{ThmExistDisc} are more general than those in \cite{FLNV}, since the
assumptions on the kernels (\ref{PrevIned1}), (\ref{PrevIneq2}) were used
there. Therefore, the results in this paper provide an improvement of the
earlier results even in the case of one component, $d=1$.
\end{remark}

\begin{remark}
Let us point out that no uniqueness of the solutions
is claimed in Theorems \ref{ThmExistCont}, \ref{ThmExistDisc}. The issue of uniqueness of stationary injection
solutions is an interesting open problem.
\end{remark}

The restrictions for the values of $\gamma$ an $p$ in Theorems \ref{ThmExistCont}, \ref{ThmExistDisc} are not only
sufficient to have stationary injection solutions, but they are also necessary. Indeed,
we have the following non-existence results, which are analogous to Theorems 2.4, 5.3. in
\cite{FLNV}.

\begin{theorem} 
The following results hold:
\begin{itemize}
\item[(i)] \label{NonexistCont} 
Suppose that $K$ is a continuous symmetric function that satisfies
(\ref{Ineq2}), (\ref{Ineq3}) with $\gamma+2p\geq1.$ 
 Suppose that 
$\eta\in\mathcal{M}_{+,b}\left(  \mathbb{R}_{*}^{d}\right)  $
is supported inside the set $\defset{x\in \R^d_*}{1\le |x|\le L}$ for some $L>1$. 
Then there are no solutions to (\ref{eq:contStat}) in the
sense of Definition \ref{StatInjDef}.

\item[(ii)] \label{NonExistDisc}
Suppose that $K$ is symmetric and satisfies (\ref{Ineq1}), (\ref{Ineq3})
with $\gamma+2p\geq1.$ Let $\left\{  s_{\alpha}\right\}  _{\alpha\in
\mathbb{N}_{*}^{d}  }$ be a nonnegative sequence
supported on a finite number of values $\alpha$.
Then there are no solutions to
(\ref{S1E5}) in the sense of Definition \ref{DefStatInjDis}.
\end{itemize} 
\end{theorem}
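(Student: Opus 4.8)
The plan is to prove both parts simultaneously, noting that (ii) is just the special case of (i) where $f$ is a sum of Dirac masses on the lattice $\mathbb{N}_*^d$; the discrete estimates in \eqref{Ineq1} are inherited from \eqref{Ineq2} under this identification, so it suffices to carry out the argument for the continuous equation \eqref{eq:contStat}. The strategy mirrors the one-component non-existence proof in \cite{FLNV}, Theorems 2.4 and 5.3: assume for contradiction that a stationary injection solution $f\in\mathcal{M}_+(\mathbb{R}_*^d)$ exists, with $\supp f\subset\{|x|\ge 1\}$ and the moment bound \eqref{S1E4}. The idea is to test the weak formulation \eqref{WeakForm1} against a carefully chosen family of test functions $\varphi_R$ that are essentially indicators of the ball $\{|x|\le R\}$ (smoothed out so that $\varphi_R\in C^1_c(\mathbb{R}_*^d)$), and to extract from the resulting identity a lower bound on the coagulation flux across the sphere $\Sigma_R$ that grows without bound as $R\to\infty$, contradicting the fact that the source $\eta$ has finite total mass $\int\eta(dx)<\infty$.

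First I would fix $R$ large (larger than $L$) and take $\varphi_R$ to be a smooth approximation of $\mathbbm{1}_{\{|x|\le R\}}$, with $\varphi_R\equiv 1$ on $\{|x|\le R\}$ and $\varphi_R\equiv 0$ on $\{|x|\ge 2R\}$. Plugging into \eqref{WeakForm1}, the source term contributes exactly $\int\eta(dx)=:M$ for $R>L$, since $\varphi_R=1$ on the support of $\eta$. The coagulation term $\tfrac12\int\int K(x,y)[\varphi_R(x+y)-\varphi_R(x)-\varphi_R(y)]f(dx)f(dy)$ is then forced to equal $-M<0$. The key observation is that $\varphi_R(x+y)-\varphi_R(x)-\varphi_R(y)\le 0$ whenever both $|x|\le R$ and $|y|\le R$ (since then $\varphi_R(x)=\varphi_R(y)=1$ while $\varphi_R(x+y)\le 1$), and more precisely this quantity equals $-1$ on the bulk region $\{|x|\le R,\ |y|\le R,\ |x+y|\ge 2R\}$, and is $O(1)$ elsewhere. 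So, up to controlling error terms from the transition layer $R\le|x+y|\le 2R$ and the region where one argument exceeds $R$, the identity yields
\[
M \ \ge \ \tfrac12 \int_{\{|x|\le R,\ |y|\le R,\ |x+y|\ge 2R\}} K(x,y)\, f(dx)\,f(dy) \ - \ (\text{error}).
\]
Using the lower bound in \eqref{Ineq2}–\eqref{Ineq3}, $K(x,y)\ge c_1(|x|+|y|)^\gamma \Phi(|x|/(|x|+|y|))\ge c_1 (|x|+|y|)^\gamma$, and on the relevant region $|x|+|y|\ge|x+y|\ge 2R$, so $K(x,y)\ge c_1 (2R)^\gamma$.

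The heart of the matter — and the step I expect to be the main obstacle — is to show that this flux term does not vanish as $R\to\infty$, but in fact diverges when $\gamma+2p\ge 1$. One needs a lower bound on $f$ itself, i.e., on the mass $\mu(R):=\int_{\{1\le|x|\le R\}}f(dx)$ or on a weighted version of it near the sphere. The argument should proceed by first establishing, again from \eqref{WeakForm1} with suitable test functions, that $f\ne 0$ and that $\mu(R)$ cannot decay too fast; heuristically, a constant flux $M$ through every sphere combined with the kernel growth rate $R^\gamma$ forces $f$ to carry mass of order $R^{-(\gamma+1)/2}$ per unit $|x|$ near scale $R$ (this is the constant-flux scaling visible in \eqref{C4}), whence $\int_{R\le|x|\le 2R}|x|f(dx)\sim R^{(1-\gamma)/2}\to\infty$ if $\gamma<1$, and one must track where the extra factor $p$ (equivalently the $\Phi$-anisotropy near the boundary of the simplex) worsens this. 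Here the multicomponent geometry enters: the flux integral is over the $(d-1)$-dimensional sphere, and one must use the localization information or at least a Cauchy–Schwarz / Jensen argument on the simplex to convert the two-variable integral $\int\int K f f$ into a controllable expression in the one-dimensional moment $\int|x|^{\gamma+p}f(dx)$. I would combine: (a) a comparison argument showing $\int_{\{|x|\ge R\}}|x|^{\gamma+p}f(dx)$ must be bounded (from \eqref{S1E4}) yet (b) the flux identity forcing $\int_{\{|x|\sim R\}} K(x,y)f(dx)f(dy)\gtrsim M$ for all $R$, which together with the upper kernel bound and dyadic decomposition in $|x|,|y|$ produces, when $\gamma+2p\ge 1$, a divergent lower bound for the left side of \eqref{S1E4} — the desired contradiction. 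The technical work is in handling the $\Phi$-singularity at $s=0,1$ (i.e. when one cluster is much smaller than the other), which is exactly where the exponent $p$ — rather than $\lambda$ — governs the estimates, and in making the smoothing of $\varphi_R$ precise so that the gradient terms $\int_0^1 \xi\cdot\nabla\varphi_R(x+t\xi)\,dt$ from the transition layer are genuinely lower-order; this is the place where the refined geometric arguments alluded to in the introduction are needed, replacing the elementary one-dimensional estimates of \cite{FLNV}.
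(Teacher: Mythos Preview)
Your overall strategy---assume a solution exists, derive a flux identity from the weak formulation, and contradict the moment bound \eqref{S1E4}---matches the paper's. However, there are two genuine gaps that would prevent your proposal from closing.

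\textbf{First, you skip the reduction to $p=0$.} The paper's very first move is to invoke Lemma \ref{EquivDefin}: replacing $f$ by $|x|^{-p}f$ and $K$ by $|x|^p|y|^p K$ converts the problem into one with a kernel satisfying $c_1(|x|+|y|)^{\gamma+2p}\le \tilde K\le c_2(|x|+|y|)^{\gamma+2p}$, i.e.\ with new exponents $\tilde\gamma=\gamma+2p\ge 1$ and $\tilde p=0$. This completely eliminates the $\Phi$-singularity you flag as ``technical work'', and without it your claimed lower bound $K(x,y)\ge c_1(|x|+|y|)^\gamma$ is actually false when $p<0$ (since then $\Phi(s)=(s(1-s))^{-p}$ can be arbitrarily small). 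After this reduction the problem is essentially one-dimensional in the radial variable $r=|x|$.

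\textbf{Second, and more seriously, your plan for the ``main obstacle'' is not a proof.} The paper does \emph{not} use Cauchy--Schwarz, dyadic decomposition, or a direct constant-flux-scaling heuristic. Instead it tests with $\varphi(x)=|x|\chi_{R,\delta}(|x|)$ (mass-weighted, not the plain indicator you use), obtaining the exact identity that the mass flux across $\{|x|=R\}$ equals $|J|=|\int x\,\eta(dx)|$ for every $R>L$. It then splits the flux integral into the region $\{\rho\ge\delta r\}$ and $\{\rho<\delta r\}$; the first piece is shown to vanish as $R\to\infty$ using only the upper kernel bound and the finiteness of the $\gamma$-moment. The remaining piece, where $r$ is forced close to $R$, is rewritten as
\[
\int_{[1,\delta R]} h(\rho)\bigl[W(R-\rho)-W(R)\bigr]\,d\rho \ \ge\ \frac{C_1}{R^{\gamma+1}},
\]
with $h(r)=r^{d-1}\int_{\Delta^{d-1}}F(r,\theta)\,d\tau(\theta)$ and $W(R)=\int_{(R,\infty)}h$. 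The crucial step is then an appeal to an auxiliary comparison lemma (Lemma \ref{lem:F+estimate}, taken from \cite{FLNV}) which turns this difference inequality into the pointwise lower bound $W(R)\ge B R^{-\gamma}$ for large $R$. Since $\int \rho^\gamma h(\rho)\,d\rho<\infty$ forces $R^\gamma W(R)\to 0$, this is the contradiction. Your proposal never isolates this difference inequality for $W$, nor invokes (or reproves) the comparison lemma that resolves it; the vague ``dyadic decomposition'' does not substitute for it.

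Your reduction of (ii) to (i) is correct and is exactly what the paper does.
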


\bigskip

Concerning the constant flux solutions to (\ref{S1E7}) we have already seen in Section
\ref{StatTypes} that (\ref{C4}) defines a constant flux solution to
(\ref{S1E7}) in the sense of Definition \ref{DefConstFlux} if $\gamma+2p<1$, at least
when $K$ is a homogeneous kernel function.
If
$\gamma+2p\geq1,$ such solutions do not exist. This is the content of the
following Theorem.
\begin{theorem}
\label{NonexFluxSol}Suppose that $K$ satisfies (\ref{Ineq2}), (\ref{Ineq3})
with $\gamma+2p\geq1$. There are no non-trivial constant flux solutions to (\ref{S1E7}),
in the sense of Definition \ref{DefConstFlux}.
\end{theorem}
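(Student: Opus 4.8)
\textbf{Proof proposal for Theorem \ref{NonexFluxSol}.}

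The plan is to argue by contradiction: assume a non-trivial constant flux solution $f$ exists, so that $A(R)=J_0>O$ for all $R>0$, and derive a contradiction from the assumption $\gamma+2p\ge 1$ by tracking how the flux $A_j(R)$ must behave as $R\to 0^+$ and as $R\to\infty$. First I would record the basic consequences of Definition \ref{DefConstFlux}: the finiteness condition forces $\int_{\{|x|<1\}}|x|^{1-p}f(dx)<\infty$ and $\int_{\{|x|\ge 1\}}|x|^{\gamma+p}f(dx)<\infty$, and working in the $(r,\theta)$ variables of \eqref{G2} this says $\int r^{d-1+\gamma+p}F(r,\theta)\,dr\,d\tau(\theta)<\infty$ near infinity and $\int r^{d-1+1-p}F(r,\theta)\,dr\,d\tau(\theta)<\infty$ near zero. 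Since $f\ne 0$, at least one component $J_{0,j}>0$; I would fix such a $j$ and focus on $A_j(R)$ in \eqref{S1E8}, keeping in mind $\theta_j>0$ on the relevant part of the support because of the lower kernel bound.

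The core step is to estimate $A_j(R)$ from above using the kernel bound \eqref{Ineq3}: inside the integrand $G(r,\rho;\theta,\sigma)\le c_2(r+\rho)^\gamma\Phi\!\big(\tfrac{r}{r+\rho}\big)$, and on the domain of integration in \eqref{S1E8} one has $r\le R$ and $\rho>R-r$, hence $r+\rho>R$ whenever $r\le R$. I would split the $\rho$-integral into the ``diagonal'' region $\rho\le r$ and the ``bulk'' region $\rho>r$, and similarly exploit that on $(0,R]$ either $r$ is comparable to $R$ or $r$ is small. In the regime where $r$ is of order $R$ and $\rho$ is large, the factor $(r+\rho)^\gamma\Phi(r/(r+\rho))\sim \rho^{\gamma-p}r^{-p}$ (up to constants, using $\Phi(s)\le s^{-p}(1-s)^{-p}$ and $s=r/(r+\rho)$ small), which combines with $r^d\rho^{d-1}F(r,\theta)F(\rho,\sigma)$ to give an integrand bounded by $r^{d-p}F(r,\theta)\cdot\rho^{d-1+\gamma-p}F(\rho,\sigma)$; the $\rho$-integral is then controlled by the large-$r$ tail bound $\int\rho^{d-1+\gamma+p}F\,d\rho<\infty$ provided $\gamma-p\le \gamma+p$, which always holds, but I must be more careful since here we need the integral over $\rho>R-r$ to \emph{decay} in $R$, not merely be finite. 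The precise claim I would aim for is: under $\gamma+2p\ge 1$ one can bound $A_j(R)\le C\,\omega(R)$ where $\omega(R)\to 0$ either as $R\to\infty$ or as $R\to 0^+$ — contradicting $A_j(R)\equiv J_{0,j}>0$. This mirrors the one-component argument in \cite{FLNV} (Theorem 5.3 there), where the non-existence at $\gamma+2\lambda\ge 1$ comes from the flux necessarily vanishing in one of the two limits.

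To make the limit argument rigorous I would proceed as follows. Consider first $R\to 0^+$. Every factor $r$ in \eqref{S1E8} satisfies $r\le R$, so by dominated convergence (using the near-zero integrability $\int_{(0,1]} r^{d-1+1-p}F\,dr<\infty$ together with the kernel bound, which near $r=0$ behaves like $r^{-p}$ and is absorbed since $d-1+1-p - p = d-2p\ge d-1>-1$ under mild assumptions, or more robustly since the relevant power is $r^{d-p}$ from $r^d\cdot r^{-p}$), the integral $A_j(R)\to 0$ as $R\to 0^+$; hence $J_{0,j}=0$, a contradiction, \emph{unless} the near-zero behaviour is not summable, i.e.\ unless $F$ has too much mass near $r=0$. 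The delicate case is precisely the borderline, and here the condition $\gamma+2p\ge 1$ enters: it is exactly the threshold at which the scaling $F\sim r^{-(\gamma+1)/2-d}$ of the formal constant-flux solution \eqref{C4} fails the integrability condition $\int_{(0,1]}r^{d-p}F\,dr<\infty$ (the exponent becomes $d-p-(\gamma+1)/2-d+? $; one checks $d - (\gamma+1)/2 - d = -(\gamma+1)/2$ and the integrability near $0$ of the $\theta$-integrated flux fails precisely when $\gamma+2p\ge 1$). So the real work is: (a) showing any constant flux solution must have the self-similar lower/upper bounds near $r=0$ and $r=\infty$ forced by $A(R)=J_0$ constant and the kernel bounds — this is the refined geometric/scaling argument — and then (b) showing these bounds are incompatible with the integrability conditions in Definition \ref{DefConstFlux} when $\gamma+2p\ge 1$. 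The main obstacle is step (a): in the multicomponent setting one cannot reduce to a point as in $d=1$, so I expect to need the localization-type estimates — controlling the $\theta$-distribution of $F$ on each sphere $\Sigma_R$ — or, more economically, to integrate \eqref{S1E8} against suitable weights in $R$ to extract a scalar inequality of the form $\int_0^\infty R^{-1}\big(\text{something}\ge cJ_0\big)\,dR=\infty$ versus a finite bound coming from \eqref{S1E4}-type conditions, yielding the contradiction. I would model this on the one-component proof in \cite{FLNV} and adapt the geometric parts as the authors indicate is needed.
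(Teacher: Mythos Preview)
Your proposal has a genuine gap: the argument never reaches a contradiction, and the route you sketch is not the one that works. You try to show $A_j(R)\to 0$ as $R\to 0^+$ by dominated convergence, but the domination you need---finiteness of $\int \rho^{d-1-p}(r+\rho)^{\gamma+2p}F\,d\rho$ uniformly in small $r$---is not guaranteed by the integrability hypotheses (you only know $\int_{(0,1)}\rho^{d-p}F\,d\rho<\infty$, which is one power of $\rho$ too weak). You then fall back on a vague plan of proving ``self-similar lower/upper bounds'' for $F$ and showing incompatibility with the moment conditions, but you identify this as the main obstacle and do not carry it out. You also worry about needing localization estimates in the $\theta$-variable; this is unnecessary and sends you in the wrong direction.

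The paper's proof avoids all of this by two reductions you are missing. First, Lemma~\ref{EquivDefin}(iii) lets you assume $p=0$ and $\gamma\ge 1$, which eliminates the $\Phi$ factor from the kernel bounds and makes the analysis much cleaner. Second, summing \eqref{S1E8} over $j$ and using $|\theta|=1$ collapses the problem to a scalar flux identity identical to \eqref{S8E7}; integrating out the simplex variable then reduces everything to the one-dimensional measure $h(r)=r^{d-1}\int_{\Delta^{d-1}}F(r,\theta)\,d\tau(\theta)$, so no $\theta$-localization is needed at all. From here the paper splits the flux integral at $R$ into the regions $\{\rho\ge\delta r\}$ and $\{\rho<\delta r\}$, shows the first vanishes as $R\to\infty$ by the moment bound $\int r^{\gamma}h(r)\,dr<\infty$, and for the second obtains the inequality $\int_{[1,\delta R]}h(\rho)[W(R-\rho)-W(R)]\,d\rho\ge C R^{-\gamma-1}$ with $W(R)=\int_{(R,\infty)}h$. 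The key step you are missing entirely is Lemma~\ref{lem:F+estimate}: a comparison/subsolution argument that converts this difference inequality into the lower bound $W(R)\ge B R^{-\gamma}$, which immediately contradicts $\int r^{\gamma}h\,dr<\infty$. This is precisely the ``step (a)'' you flag as the obstacle; the paper resolves it with that lemma, and the argument works only at $R\to\infty$.
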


\bigskip

\begin{remark}
We observe that we did not require the kernel $K$ to be homogeneous in Theorems \ref{ThmExistCont}, \ref{ThmExistDisc}, nor in Theorems \ref{NonexistCont}, \ref{NonexFluxSol}, for which the upper and lower estimates \eqref{Ineq1}, \eqref{Ineq2}, \eqref{Ineq3} suffice. 
Notice that, if the kernel $K$ is homogeneous, there are constant flux solutions to \eqref{S1E7} with $H\left(  r\right)=r^{-\frac{\gamma+3}{2}}$ (cf.~\eqref{C4}). However,
not all constant flux solutions to the one-component equation are necessarily power-laws, even if one assumes homogeneity of the kernel. Examples of non-power law solutions for certain one-dimensional coagulation kernels are given in \cite{FLNV3}.
\end{remark}

\bigskip

\section{Some auxiliary results} 
\label{sec:5}

\subsection{A convenient reformulation of the problem}  
\label{ReductpZero} 
\bigskip

The kernels $K$ satisfying (\ref{Ineq1})--(\ref{Ineq3}) are 
characterized by the two parameters $\gamma, p$.
It turns out that, using a suitable change of variable, we can reformulate the
problems described in Section \ref{StatTypes} with kernels $K$ 
into similar problems with new kernels $\tilde{K}$ characterized by parameters
$\tilde{\gamma}=\gamma+2p$ and $\tilde{p}=0$.

To see this, we will use an idea used in \cite{DP} (see also \cite{BLL} and the recent paper \cite{Laurencotpreprint20}). 
Before formulating the
precise results, we explain the idea in the case of the continuous coagulation
equation (\ref{eq:contStat}). Suppose that $f\in\mathcal{M}_{+}\left(
\mathbb{R}_{*}^{d}\right)  $ solves (\ref{eq:contStat}). We can rewrite this
equation as 
\begin{align*}
& 0=\frac{1}{2}\int_{\left\{ 0<\xi<x\right\} }d\xi\left[  K\left(  x-\xi
,\xi\right)  \left\vert x-\xi\right\vert ^{p}\left\vert \xi\right\vert
^{p}\right]  \frac{f\left(  x-\xi\right)  }{\left\vert x-\xi\right\vert ^{p}%
}\frac{f\left(  \xi\right)  }{\left\vert \xi\right\vert ^{p}} 
\\ \quad &
- \int
_{\mathbb{R}_{*}^{d}}d\xi\left[  K\left(  x,\xi\right)  \left\vert
x\right\vert ^{p}\left\vert \xi\right\vert ^{p}\right]  \frac{f\left(
x\right)  }{\left\vert x\right\vert ^{p}}\frac{f\left(  \xi\right)
}{\left\vert \xi\right\vert ^{p}}+\eta\left(  x\right) . 
\end{align*}
Therefore, if we multiply the measure $f$ by the strictly positive continuous
function $x\mapsto \left\vert x\right\vert ^{-p}$, i.e., if we 
define 
$h\left(  x\right)dx =
\left\vert x\right\vert ^{-p}f(x)dx$,
we find that it solves the following equation 
\begin{equation}
0=\frac{1}{2}\int_{\left\{ 0<\xi<x\right\} } d\xi\tilde{K}\left(  x-\xi
,\xi\right)  h\left(  x-\xi\right)  h\left(  \xi\right)  -\int_{{\mathbb{R}%
}_{+}^{d}} d\xi\tilde{K}\left(  x,\xi\right)  h\left(  x\right)  h\left(
\xi\right)  +\eta\left(  x\right)  \label{S1E9}%
\end{equation}
where 
\begin{equation}
\tilde{K}\left(  x,y\right)  =K\left(  x,y\right)  \left\vert x\right\vert
^{p}\left\vert y\right\vert ^{p} .\label{S2E1}%
\end{equation}

Notice that (\ref{S1E9}) has the same form as (\ref{eq:contStat}). However,
the bounds for the kernel $\tilde{K}$ are simpler than those for $K$. 
Namely, we recall
that $K$ satisfies (\ref{Ineq2}), (\ref{Ineq3}), and thus obtain the bounds
\[
c_{1}\left(  \left\vert x\right\vert +\left\vert y\right\vert \right)
^{\gamma}\left\vert x\right\vert ^{p}\left\vert y\right\vert ^{p}\Phi\left(
\frac{\left\vert x\right\vert }{\left\vert x\right\vert +\left\vert
y\right\vert }\right)  \leq\tilde{K}\left(  x,y\right)  \leq c_{2}\left(
\left\vert x\right\vert +\left\vert y\right\vert \right)  ^{\gamma}\left\vert
x\right\vert ^{p}\left\vert y\right\vert ^{p}\Phi\left(  \frac{\left\vert
x\right\vert }{\left\vert x\right\vert +\left\vert y\right\vert }\right)\,.
\]
Denoting $s=\frac{\left\vert x\right\vert }{\left\vert x\right\vert
+\left\vert y\right\vert }$, we then obtain 
\[
c_{1}\left(  \left\vert x\right\vert +\left\vert y\right\vert \right)
^{\gamma+2p}s^{p}\left(  1-s\right)  ^{p}\Phi\left(  s\right)  \leq\tilde
{K}\left(  x,y\right)  \leq c_{2}\left(  \left\vert x\right\vert +\left\vert
y\right\vert \right)  ^{\gamma+2p}s^{p}\left(  1-s\right)  ^{p}\Phi\left(
s\right)\,.
\]
By (\ref{Ineq3}), this implies 
\begin{equation}
c_{1}\left(  \left\vert x\right\vert +\left\vert y\right\vert \right)
^{\gamma+2p}\leq\tilde{K}\left(  x,y\right)  \leq c_{2}\left(  \left\vert
x\right\vert +\left\vert y\right\vert \right)  ^{\gamma+2p} \,,  \qquad x,y\in \R^d_*\,.\label{S2E2}%
\end{equation}

Therefore, $h$ solves (\ref{S1E9}) which is the same equation as
(\ref{eq:contStat}) with a kernel $\tilde{K}$ satisfying (\ref{S2E2}).
The new kernel thus satisfies (\ref{Ineq2}) after
replacing $\gamma$ by $\tilde{\gamma
}=\gamma+2p$ and $p$ by $\tilde{p}=0$. 
In addition, the supports of $h$ and $f$ are the same, and the moment bound 
(\ref{S1E4}) is true for $f$, $\gamma$, and $p$, if and only if it is true for
$h$, $\tilde{\gamma}$, and $\tilde{p}$. 
Notice that exactly the same argument can be made in the weak
formulation of (\ref{eq:contStat}), as well as in the discrete problem
(\ref{B2Stat}). Therefore we can reduce the discrete problem considered in
this paper to an analogous problem with kernel
\begin{equation}
\label{S2E1bis}\tilde{K}\left(  \alpha,\beta\right)  =K\left(  \alpha
,\beta\right)  \left\vert \alpha\right\vert ^{p}\left\vert \beta\right\vert
^{p}%
\end{equation}
which satisfies an estimate 
\begin{equation}
c_{1}\left(  \left\vert \alpha\right\vert +\left\vert \beta\right\vert
\right)  ^{\gamma+2p}\leq\tilde{K}\left(  \alpha,\beta\right)  \leq
c_{2}\left(  \left\vert \alpha\right\vert +\left\vert \beta\right\vert
\right)  ^{\gamma+2p}.\label{S2E3}%
\end{equation}

These observations can be summarized as follows.
\begin{lemma}
\label{EquivDefin}

The following statements hold:

\begin{itemize}
\item[(i)] Let 
$\eta\in\mathcal{M}_{+,b}\left(  \mathbb{R}_{*}^{d}\right)  $ with the 
support in the set $\defset{x\in \R^d_*}{1\le |x|\le L}$ for some $L>1$. 
Let us assume that $K$ is continuous and satisfies (\ref{Ineq2}), (\ref{Ineq3}).
Then, the Radon measure $f\in\mathcal{M}_{+}\left(  \mathbb{R}_{*}^{d}\right)  $ is
a stationary injection solution to (\ref{eq:contStat}) in the sense of
Definition \ref{StatInjDef} if and only if the Radon measure $h\left(
x\right)  =\frac{f\left(  x\right)  }{\left\vert x\right\vert ^{p}}$ is a
stationary injection solution to (\ref{eq:contStat}) with  kernel
$\tilde{K}$ defined as in (\ref{S2E1}). Moreover, (\ref{S2E2}) holds.

\item[(ii)] Suppose that $\left\{  s_{\alpha}\right\}  _{\alpha\in
\mathbb{N}_{\ast}^{d}\setminus\left\{  O\right\}  }$ is a sequence supported
in a finite number of values $\alpha.$ Let us assume that the kernel $K$
satisfies (\ref{Ineq1}), (\ref{Ineq3}). Then the sequence $\left\{  n_{\alpha
}\right\}  _{\alpha\in\mathbb{N}_{*}^{d}  }$ is a
stationary injection solution to (\ref{S1E5}) in the sense of Definition
\ref{DefStatInjDis} if and only if the sequence $\left\{ \left\vert \alpha\right\vert ^{-p}
n_{\alpha}\right\}  _{\alpha\in\mathbb{N}_{*}^{d} }$ is a stationary injection solution to
(\ref{S1E5}) with kernel $\tilde{K}$ defined as in (\ref{S2E1bis}). Moreover, \eqref{S2E3} is satisfied.

\item[(iii)] Let us assume that $K$ is continuous and satisfies (\ref{Ineq2}),
(\ref{Ineq3}). The Radon measure $f\in\mathcal{M}_{+}\left(  \mathbb{R}%
_{+}^{d}\right)  $ is a constant flux solution to (\ref{S1E7}) in the sense of
Definition \ref{DefConstFlux} if and only if the Radon measure $h\left(
x\right)  =\frac{f\left(  x\right)  }{\left\vert x\right\vert ^{p}}$ is a
constant flux solution to (\ref{S1E7}) with the kernel $\tilde{K}$ defined as in
(\ref{S2E1}). Moreover, (\ref{S2E2}) is satisfied.
\end{itemize}
\end{lemma}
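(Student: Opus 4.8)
The plan is to observe that the correspondence $f \mapsto h$ with $h = |x|^{-p} f$ is a bijection of $\mathcal{M}_{+}(\R^d_*)$ onto itself, with inverse $h \mapsto |x|^p h$, and that it intertwines every structure entering Definitions \ref{StatInjDef}, \ref{DefStatInjDis} and \ref{DefConstFlux}. Since $x \mapsto |x|^{\pm p}$ is continuous and strictly positive on $\R^d_*$ (recall $|x| = \sum_j x_j > 0$ there), it is locally bounded, so $h$ is again a locally finite Radon measure with $\supp h = \supp f$; the same remark shows that $\tilde K$ in \eqref{S2E1}, \eqref{S2E1bis} is continuous (resp.\ well defined) whenever $K$ is. All three statements then follow once we check (a) the kernel bounds, (b) the integrability conditions, (c) the weak equations, and, for (iii), (d) the flux functional.

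For (a), using the explicit form $\Phi(s) = s^{-p}(1-s)^{-p}$ from \eqref{Ineq3} with $s = |x|/(|x|+|y|)$ one gets $|x|^p |y|^p \Phi(s) = (|x|+|y|)^{2p}$; multiplying \eqref{Ineq2} (resp.\ \eqref{Ineq1}) by $|x|^p|y|^p$ therefore yields exactly \eqref{S2E2} (resp.\ \eqref{S2E3}), i.e.\ $\tilde K$ satisfies \eqref{Ineq1}--\eqref{Ineq3} with parameters $\tilde\gamma = \gamma + 2p$ and $\tilde p = 0$, for which $\Phi \equiv 1$. For (b), since $f(dx) = |x|^p h(dx)$ we have $\int |x|^{\gamma+p} f(dx) = \int |x|^{\gamma+2p} h(dx) = \int |x|^{\tilde\gamma+\tilde p} h(dx)$, as well as $\int_{|x|\ge 1}|x|^{\gamma+p} f + \int_{|x|<1}|x|^{1-p} f = \int_{|x|\ge 1}|x|^{\tilde\gamma+\tilde p} h + \int_{|x|<1}|x|^{1-\tilde p} h$, and similarly $\sum_\alpha |\alpha|^{\gamma+p} n_\alpha = \sum_\alpha |\alpha|^{\tilde\gamma+\tilde p}(|\alpha|^{-p} n_\alpha)$ in the discrete case; thus \eqref{S1E4} and the moment condition in Definition \ref{DefConstFlux} hold for $(f, \gamma, p)$ if and only if they hold for $(h, \tilde\gamma, \tilde p)$.

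The heart of the matter is (c). Inserting $f(dx) = |x|^p h(dx)$ into \eqref{WeakForm1} and using \eqref{S2E1}, the product measure satisfies $K(x,y)[\varphi(x+y)-\varphi(x)-\varphi(y)]\, f(dx)f(dy) = \tilde K(x,y)[\varphi(x+y)-\varphi(x)-\varphi(y)]\, h(dx)h(dy)$ as measures on $\R^d_*\times\R^d_*$, while the source term $\int \varphi\, \eta$ is unchanged; hence for every $\varphi \in C^1_c(\R^d_*)$ equation \eqref{WeakForm1} holds for $(K, f, \eta)$ precisely when it holds for $(\tilde K, h, \eta)$, and in particular the absolute convergence of the double integral is equivalent in the two settings. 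Together with (b) this proves (i). Part (ii) then follows by applying (i) to $f = \sum_\alpha n_\alpha \delta(\cdot - \alpha)$ and $\eta = \sum_\alpha s_\alpha \delta(\cdot - \alpha)$: one has $h = \sum_\alpha |\alpha|^{-p} n_\alpha \delta(\cdot - \alpha)$, and the restriction of $\tilde K$ to $\N^d_* \times \N^d_*$ is exactly $\tilde K_{\alpha,\beta}$ of \eqref{S2E1bis}, so Definition \ref{DefStatInjDis} carries over verbatim (with the same source $\{s_\alpha\}$).

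For (iii) it remains to track the flux functional \eqref{S1E8}. Reading off the defining identity \eqref{G2} with $h$ in place of $f$ (testing against $\psi'(r,\theta)=r^{-p}\psi(r,\theta)$) shows that the measure associated with $h$ is $\tilde F(r,\theta) = r^{-p} F(r,\theta)$, whereas $\tilde G(r,\rho;\theta,\sigma) = \tilde K(r\theta, \rho\sigma) = r^p \rho^p\, G(r,\rho;\theta,\sigma)$ because $|\theta| = |\sigma| = 1$. Substituting into \eqref{S1E8}, the factors $r^{-p}\rho^{-p}$ coming from $\tilde F\, \tilde F$ cancel exactly the $r^p\rho^p$ coming from $\tilde G$, so the flux associated with $(h, \tilde K)$ coincides with that of $(f, K)$ for every $R > 0$; in particular $A(R) = J_0 > O$ for all $R$ in one problem iff the same holds in the other, and combining this with (b) and (c) (with $\eta = 0$) gives (iii). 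I do not expect a genuine obstacle here, as the lemma reduces to the algebraic identities above; the step demanding the most care is (iii) — extracting $\tilde F = r^{-p}F$ from \eqref{G2} and verifying the cancellation of the powers of $r,\rho$ in \eqref{S1E8}, together with the simultaneous finiteness of all the integrals involved, which is automatic since the integrands agree pointwise after regrouping.
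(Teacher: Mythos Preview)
Your proof is correct and follows essentially the same approach as the paper: the paper presents the argument as an informal discussion preceding the lemma (the computation of $\tilde K$ and its bounds \eqref{S2E2}, the remark that supports and the moment condition \eqref{S1E4} transform appropriately, and the observation that the same manipulation works in the weak formulation and the discrete case), then states the lemma as a summary. Your write-up is in fact more explicit than the paper's, particularly in part (iii), where you verify directly that $\tilde F = r^{-p}F$, $\tilde G = r^p\rho^p G$, and hence that the flux functional \eqref{S1E8} is literally unchanged under the transformation; the paper does not spell this out.
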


Therefore, it is sufficient to consider kernels satisfying (\ref{Ineq1})--(\ref{Ineq3}) 
with $p=0$. Equivalently, we need to examine kernels
satisfying (\ref{S2E2}), (\ref{S2E3}).

\subsection{An auxiliary Lemma}

The following result which will be extensively used in the rest of the paper
has been proven as Lemma 2.10 in \cite{FLNV}. It allows to transform
estimates of averaged integrals into estimates on the whole line.
\begin{lemma}\label{lem:bound}
Suppose $a>0$ and $b \in (0,1)$, and assume that 
$R\in (0,\infty]$ is such that 
$R\ge a$.  Consider some $f \in \mathcal{M_+}(\R_*)$ and $\varphi \in C(\R_*)$,
with $\varphi\ge 0$.
\begin{enumerate}
 \item\label{it:Rlessinf} Suppose $R<\infty$, and assume that there is 
 $g \in L^1([a,R])$ such that $g\ge 0$ and 
\begin{equation}\label{eq:avg}
\frac{1}{z}\int_{[bz,z]} \varphi(x) f(dx) \leq g(z)\,, \quad \text{for } z \in [a,R] \,.
\end{equation}
Then
\begin{equation}\label{eq:bound}
\int_{[a,R]} \varphi(x) f(dx) \leq \frac{\int_{[a,R]}g(z)dz}{\vert \ln b\,\vert }
+ R g(R)\,.
\end{equation}
 \item\label{it:polcase}
 Consider some $r\in (0,1)$, and assume that $a/r\le R<\infty$.
 Suppose that 
(\ref{eq:avg}) holds for $g(z)=c_0 z^q$, with $q\in \R$ and $c_0\ge 0$.  Then there is a constant $C>0$, which depends only on $r$, $b$ and $q$, such that
\begin{equation}\label{eq:polbound}
\int_{[a,R]} \varphi(x) f(dx) \leq C c_0 \int_{[a,R]} z^q dz \,.
\end{equation}
\item\label{it:Risinf} If $R=\infty$ and there is 
 $g \in L^1([a,\infty))$ such that $g\ge 0$ and 
\begin{equation}\label{eq:avgRinfty}
\frac{1}{z}\int_{[bz,z]} \varphi(x) f(dx) \leq g(z)\,, \quad \text{for } z \ge a \,,
\end{equation}
then
\begin{equation}\label{eq:boundRinfty}
\int_{[a,\infty)} \varphi(x) f(dx) \leq \frac{\int_{[a,\infty)}g(z)dz}{\vert \ln b\,\vert }\,.
\end{equation}
\end{enumerate}
\end{lemma}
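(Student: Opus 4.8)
\textbf{Proof proposal for Lemma \ref{lem:bound}.}

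The plan is to prove all three parts by the same elementary device: integrating the averaging inequality \eqref{eq:avg} over the scale variable $z$ and using Fubini--Tonelli to recover the full integral $\int \varphi \, f$. First I would set up the key computation. For $\varphi\ge 0$ and $f\in\mathcal{M}_+(\R_*)$, consider the double integral $I:=\int_{[a,R]}\frac{dz}{z}\int_{[bz,z]}\varphi(x)\,f(dx)$. Swapping the order of integration (legitimate by Tonelli since everything is non-negative and measurable), for a fixed $x$ the inner condition $bz\le x\le z$ is equivalent to $x\le z\le x/b$, so the $z$-integral contributes $\int_{[\max(a,x),\min(R,x/b)]}\frac{dz}{z}$. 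The crucial observation is that whenever $x\in[ba,R]$ and $x/b\le R$ this equals $\ln(1/b)=|\ln b|$ exactly (since $b\in(0,1)$ forces $x<x/b$, and the endpoints are $x$ and $x/b$ provided $x\ge a$). For $x$ near the upper end $R$, i.e. $x\in(bR,R]$, the upper cutoff $R$ truncates the interval and the $z$-integral is only $\int_{[x,R]}\frac{dz}{z}\le \ln(1/b)$, but this is where the boundary term $Rg(R)$ will be needed to compensate.

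For part \ref{it:Rlessinf}: bound $I$ from above using \eqref{eq:avg}, giving $I\le\int_{[a,R]}g(z)\,dz$. Bound $I$ from below by discarding the contribution of $x>bR$ if necessary; more carefully, split $\int_{[a,R]}\varphi f = \int_{[a,bR]}\varphi f + \int_{(bR,R]}\varphi f$. On $[a,bR]$ every such $x$ has $x/b\le R$ and $x\ge a$, so the Fubini computation gives exactly the weight $|\ln b|$, hence $|\ln b|\int_{[a,bR]}\varphi f\le I\le \int_{[a,R]}g$. For the tail piece $\int_{(bR,R]}\varphi f$, apply \eqref{eq:avg} with $z=R$ directly: $\frac{1}{R}\int_{(bR,R]}\varphi f\le\frac{1}{R}\int_{[bR,R]}\varphi f\le g(R)$, so that piece is at most $Rg(R)$. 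Adding the two bounds yields \eqref{eq:bound}.

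For part \ref{it:polcase}, with $g(z)=c_0 z^q$ and the extra hypothesis $a/r\le R$: the idea is to apply part \ref{it:Rlessinf} not on $[a,R]$ directly but to control the boundary term $Rg(R)=c_0 R^{q+1}$ by an integral of the same type. Since $a\le rR<R$, the interval $[rR,R]$ lies inside $[a,R]$ and has length $(1-r)R$ comparable to $R$; on it $z^q$ is comparable to $R^q$ (the comparison constants depending only on $r$ and $q$, taking care of both signs of $q$), so $R^{q+1}\le C(r,q)\int_{[rR,R]}z^q\,dz\le C(r,q)\int_{[a,R]}z^q\,dz$. Plugging into \eqref{eq:bound} and combining with the $\frac{1}{|\ln b|}$ term gives \eqref{eq:polbound} with $C$ depending only on $r$, $b$, $q$. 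Part \ref{it:Risinf} is the cleanest: run the same Fubini argument with $R=\infty$. Now there is no upper truncation at all, so for every $x\ge a$ we have $x/b<\infty$ and the $z$-weight is exactly $|\ln b|$; hence $|\ln b|\int_{[a,\infty)}\varphi f = I \le \int_{[a,\infty)}g(z)\,dz<\infty$, which is \eqref{eq:boundRinfty}; the integrability $g\in L^1([a,\infty))$ guarantees the right-hand side is finite and hence so is the left.

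The only mildly delicate point — the main obstacle, such as it is — is the careful bookkeeping of the boundary region $x\in(bR,R]$ in the finite-$R$ case, where the inner $z$-integral is truncated and one must not overcount; this is handled by the clean split above together with a single extra application of \eqref{eq:avg} at $z=R$. Everything else is Tonelli plus the exact evaluation $\int_x^{x/b}\frac{dz}{z}=|\ln b|$, and in part \ref{it:polcase} the elementary comparison of $z^q$ with $R^q$ on $[rR,R]$.
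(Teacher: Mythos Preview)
The paper does not supply its own proof of this lemma; it is quoted verbatim as Lemma~2.10 of \cite{FLNV} and no argument is given here. Your proof is correct and is the natural one: integrate the averaging hypothesis against $dz/z$ over $[a,R]$, apply Tonelli, and use $\int_x^{x/b}\frac{dz}{z}=|\ln b|$. The handling of the boundary layer $(bR,R]$ via a single application of \eqref{eq:avg} at $z=R$ is exactly right, and the comparison $R^{q+1}\le C(r,q)\int_{[rR,R]}z^q\,dz$ in part~\ref{it:polcase} is clean. One minor slip: in part~\ref{it:Risinf} you write $|\ln b|\int_{[a,\infty)}\varphi\,f = I$, but $I$ also picks up a non-negative contribution from $x\in[ba,a)$ (with $z$-weight strictly less than $|\ln b|$), so the correct relation is $|\ln b|\int_{[a,\infty)}\varphi\,f \le I$. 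The inequality points the right way and your conclusion is unaffected.
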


\bigskip

\section{Proof of the existence of stationary injection solutions} 
\label{sec:6}

In this Section we prove Theorems \ref{ThmExistCont}, \ref{ThmExistDisc}.

\subsection{Continuous coagulation equation\label{ExistContCase}}

We first prove Theorem \ref{ThmExistCont}. Notice that due to Lemma
\ref{EquivDefin} it is enough to prove the Theorem under the additional assumptions 
$\gamma<1$, $p=0$.  In particular, then the coagulation kernel satisfies (\ref{S2E2}) with $p=0$.

We will follow a strategy that has been  used in the literature  to show existence results for some classes of unbounded coagulation kernels (cf.\ \cite{BLL}). This consists in proving first the existence of stationary
injection solutions for a truncated version of the problem in which the kernel
$K$ is replaced by a compactly supported kernel. We will then derive estimates
for the solutions of these truncated problems that are uniform in the
truncation parameter and we can then take the limit in the truncated problem
and derive a solution to (\ref{eq:contStat}). 

\bigskip

We first define the truncated kernel. We will make two truncations, the first
one to obtain a bounded kernel and the second one to obtain a kernel with
compact support. Before describing these truncations in detail we prove that
there exists a stationary injection solution for a large class of coagulation
equations with bounded, compactly supported kernels. This result will be used
later as an auxiliary tool.

We will use the following Assumptions to characterize a class of solutions in a simplified 
setup where $M$ is a cutoff parameter, $K=K_M$ is a suitably bounded kernel, and 
we additionally cut off the ``gain term'' for large values of $|x|$.  This will result in unique solvability of the coagulation evolution equation, and imply existence of
stationary solutions.

\begin{assumption}[cutoff model]
\label{Assumptions} We will make the following assumptions on the fixed source
term $\eta$, on the kernel $K_{M}$ and on the cutoff function $\zeta_{M}$.

\begin{itemize}
\item[(i)] Consider a source term $\eta\in\mathcal{M}_{+,b}\left( {{\mathbb{R}}%
}_{*}^{d}\right) $. There exists a real number $L\geq1$ such that
$\operatorname*{supp}\left(  \eta\right)  \subset\left\{  x\ : \ 1\leq
\left\vert x\right\vert \leq L\right\} $.

\item[(ii)] The kernel $K_{M}:\mathbb{R}_{*}^{d}\times\mathbb{R}_{*}%
^{d}\rightarrow{\mathbb{R}}_{+}$ is a continuous, nonnegative, symmetric
function. Suppose that $M$, $a_{1}$, $a_{2}$ are constants such that $M>2 L$, with $L$
as in item $(i)$, and $0<a_{1}<a_{2}$.  Assume that the kernel $K_{M}$ satisfies
\[
K_{M}(x,y)\leq a_{2}\quad\text{for all}\quad(x,y) \in\big(\mathbb{R}_{*}%
^{d}\big)^{2}.
\]
We assume also
\[
K_{M}(x,y)\in\left[ a_{1},a_{2}\right]  \quad\text{for} \quad(\vert
x\vert,\vert y\vert)\in\left[ 1,M\right] ^{2}
\]
and
\[
K_{M}(x,y)=0 \quad\text{if}\quad(\left\vert x\right\vert , \left\vert
y\right\vert ) \notin\big(0,2 M\big)^{2}.
\]
 
\item[(iii)] We assume that $\zeta_{M}$ is a fixed cutoff function 
such that $\zeta_{M}\in C\left(  \mathbb{R}_{*}^{d}\right)  $,
\[
0\leq\zeta_{M}\leq1, \;\; \zeta_{M}\left(  x\right)  =1\;\; \text{for} \;\;
0<\left\vert x\right\vert \leq \frac{1}{2}M ,\;\;\; \text{and }\;\; \zeta_{M}\left(
x\right)  =0\;\;\text{for}\;\;\left\vert x\right\vert \geq M.
\]

\end{itemize}
\end{assumption}

The cutoff function $\zeta_M$ 
will be used to inactivate the ``gain term'' for large cluster sizes.
Explicitly, in the simplified problem we study solutions to the 
evolution equations 
\begin{align}
& \partial_{t}f(x,t)=\frac{\zeta_{M}\left(  x\right)  }{2}\int_{\left\{  0<y<
x\right\}  }K_{M}(x-y,y)f(x-y,t)f(y,t)dy
\nonumber \\ & \qquad
-\int_{{{{\mathbb{R}}}_{*}^{d}}}%
K_{M}(x,y)f(x,t)f(y,t)dy+\eta(x)\,, \label{evolEqTrunc}%
\end{align}
where the kernel $K_{M}$, the source $\eta$, and the function $\zeta_{M}$ are
as in Assumption \ref{Assumptions}. In particular we are interested in the
steady states associated to (\ref{evolEqTrunc}). These satisfy 
\begin{equation}
0=\frac{\zeta_{M}\left(  x\right)  }{2}\int_{\left\{  0<y< x\right\}  }%
K_{M}(x-y,y)f(x-y,t)f(y,t)dy-\int_{{{{\mathbb{R}}}_{*}^{d}}}K_{M}%
(x,y)f(x,t)f(y,t)dy+\eta(x)\,. \label{SteadTrunc}%
\end{equation}

We will restrict our attention to solutions of (\ref{SteadTrunc}) which vanish
for small and very large cluster sizes. 
More precisely, we will use the
following concept of solution to (\ref{SteadTrunc}).
\begin{definition}
\label{DefStatSol} Suppose that Assumption \ref{Assumptions} holds. We will
say that $f\in{\mathcal{M}_{+}(\mathbb{R}_{*}^{d})}$, satisfying $f\left(
\left\{  x\in{\mathbb{R}_{*}^{d}}:|x|<1\text{ or }\left\vert x\right\vert
>M\right\}  \right)  =0$, is a stationary injection solution to
(\ref{evolEqTrunc}) if the following identity holds for any test function
$\varphi\in C_{c}\left(  \mathbb{R}_{*}^{d}\right)  $:
\begin{align}
0 = &  \ \frac{1}{2}\int_{{{{\mathbb{R}}}_{*}^{d}}}\int_{{{{\mathbb{R}}}%
_{*}^{d}}}K_{M}\left(  x,y\right)  \left[  \varphi\left(  x+y\right)
\zeta_{M}\left(  x+y\right)  -\varphi\left(  x\right)  -\varphi\left(
y\right)  \right]  f\left(  dx\right)  f\left(  dy\right) 
+\int_{{{{\mathbb{R}}}_{*}^{d}}}\varphi\left(  x\right)  \eta\left(
dx\right)  . \label{eq:evol_eqWeakSt}%
\end{align}

\end{definition}

In order to prove the existence of a stationary injection solution to
(\ref{evolEqTrunc}) in the sense of Definition \ref{DefStatSol} we will obtain these solutions as a fixed point for the
corresponding evolution problem.  
We first prove the following result.

\begin{proposition}
\label{thm:existence_evolution} Suppose that Assumption \ref{Assumptions}
holds. Then, for any initial condition $f_{0}$ satisfying $f_0\left(
\left\{  x\in{\mathbb{R}_{*}^{d}}:|x|<1\text{ or }\left\vert x\right\vert
>M\right\}  \right)  =0$ and for
any $T>0,$ there exists a unique time-dependent solution $f\in{C^{1}(\left[
0,T\right]  ,\mathcal{M}_{+,b}(\mathbb{R}_{*}^{d}))}$ to (\ref{evolEqTrunc})
which solves it in the classical sense. Moreover, we have%
\begin{equation}
f\left(  \left\{  x\in{\mathbb{R}_{*}^{d}}:|x|<1\text{ or }\left\vert x\right\vert
>M\right\}   ,t\right)  =0\,,\quad\text{for }0\leq t\leq T\,,
\label{fVanish}%
\end{equation}
and the following estimate holds
\begin{equation}
\int_{\mathbb{R}_{*}^{d}}f(dx,t)\leq\int_{\mathbb{R}_{*}^{d}}f_{0}%
(dx)+Ct\,,\quad t\geq0\,, \label{fEstim}%
\end{equation}
for $C=\int_{\mathbb{R}^d_{*}}\eta(dx)\geq0$ which is independent of $f_{0}$,
$t$, and $T$.
\end{proposition}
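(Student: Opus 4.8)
The plan is to rewrite the truncated evolution equation \eqref{evolEqTrunc} as an abstract ODE in the Banach space $\mathcal{M}_b(\mathbb{R}^d_*)$ of finite signed measures, and then solve it by a fixed-point/contraction argument on a short time interval, before bootstrapping to global existence via the a priori bound \eqref{fEstim}. First I would introduce the bilinear coagulation operator $Q[f,g]$ defined weakly by the right-hand side of \eqref{eq:evol_eqWeakSt} (without the source), and observe that, because $K_M$ is bounded by $a_2$ and $0\le\zeta_M\le1$, one has the Lipschitz-type estimate $\|Q[f,f]-Q[g,g]\|_{TV}\le 3a_2(\|f\|_{TV}+\|g\|_{TV})\|f-g\|_{TV}$ on the total-variation norm, and similarly $\|Q[f,f]\|_{TV}\le \tfrac{3}{2}a_2\|f\|_{TV}^2$. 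Combined with $\|\eta\|_{TV}=C<\infty$, standard Picard iteration (or Banach's fixed point theorem applied to the integral formulation $f(t)=f_0+\int_0^t(Q[f,f]+\eta)\,ds$) yields a unique local-in-time solution $f\in C^1([0,\tau],\mathcal{M}_b)$ for $\tau$ depending on $\|f_0\|_{TV}$ and $a_2$.

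Next I would establish the two qualitative properties: non-negativity and the support confinement \eqref{fVanish}. For non-negativity, the cleanest route is to write the equation in ``loss form'': setting $\Lambda(x,t)=\int_{\mathbb{R}^d_*}K_M(x,y)f(y,t)\,dy$ (a bounded, non-negative function once we know $f\ge0$), the equation reads $\partial_t f = (\text{non-negative gain term}) - \Lambda f + \eta$, so by the Duhamel representation $f(x,t)=e^{-\int_0^t\Lambda(x,s)ds}f_0(x)+\int_0^t e^{-\int_s^t\Lambda(x,u)du}(\text{gain}+\eta)\,ds$ one sees that if $f$ stays non-negative up to some time it cannot cross zero; this can be made rigorous by a Gronwall/continuity argument on the negative part, or by running the Picard iteration in the cone $\mathcal{M}_{+,b}$ directly, using that $Q$ maps non-negative measures to measures whose negative part is controlled only by the loss term, which Duhamel handles. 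For the support property, note that the gain term $\zeta_M(x)\int K_M(x-y,y)f(x-y)f(y)\,dy$ is supported where $\zeta_M\ne0$, i.e.\ in $\{|x|\le M\}$, and moreover since $f_0$ and $\eta$ live in $\{|x|\ge1\}$ and coagulation only increases $|x|$, the measure never charges $\{|x|<1\}$; the loss term never creates mass. So $f(\cdot,t)$ remains supported in $\{1\le|x|\le M\}$, which is \eqref{fVanish}. The restriction of everything to this compact annulus is what makes $K_M$ effectively bounded and the total mass finite.

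Then I would prove the mass bound \eqref{fEstim}: testing \eqref{eq:evol_eqWeakSt} against $\varphi\equiv1$ (approximated by a sequence in $C_c$ that equals $1$ on $\{1\le|x|\le M\}$, which is legitimate since $f$ is supported there), the bracket becomes $\zeta_M(x+y)-2$; the quadratic term is therefore $\le0$ (as $\zeta_M\le1\le 2$ forces $\zeta_M(x+y)-2\le0$ and $K_M,f\ge0$), so $\frac{d}{dt}\int f(dx,t)\le\int\eta(dx)=C$, giving \eqref{fEstim} upon integration. Finally, since $\|f(t)\|_{TV}=\int f(dx,t)$ is controlled linearly in $t$ uniformly on bounded time intervals, the local solution cannot blow up in finite time, so the standard continuation argument extends it to all of $[0,T]$ for every $T>0$, and uniqueness propagates from the local uniqueness.

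The main obstacle I expect is the careful handling of the fixed-point argument directly in the space of measures: one must check that $Q[f,g]$, defined a priori only as a linear functional on $C_c(\mathbb{R}^d_*)$, is indeed represented by a finite signed Radon measure (this uses the boundedness of $K_M$ and $\zeta_M$ together with the Riesz--Markov--Kakutani identification, and the fact that the push-forward of $f\otimes f$ under $(x,y)\mapsto x+y$ is a finite measure when $f$ is), and that the map $t\mapsto f(t)$ one constructs actually lies in $C^1([0,T],\mathcal{M}_{+,b})$ and solves the equation in the classical (strong, pointwise-in-$t$) sense rather than merely weakly. Once the representation and the basic norm estimates are in place, the contraction and the global continuation are routine; the support confinement and the mass bound are then essentially free from the weak formulation \eqref{eq:evol_eqWeakSt}.
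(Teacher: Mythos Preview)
Your proposal is correct and follows essentially the same route as the paper, which also reduces \eqref{evolEqTrunc} to an integral equation via the Duhamel principle, applies a standard fixed-point argument using the boundedness of $K_M$, and obtains \eqref{fEstim} by integrating the equation. The only minor stylistic difference is that the paper invokes Duhamel from the outset (so that positivity is built into the iteration), whereas you run the plain Picard scheme on $f(t)=f_0+\int_0^t(Q[f,f]+\eta)\,ds$ and reserve the Duhamel representation for the separate positivity step; both variants are standard and equivalent here.
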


The proof of Proposition \ref{thm:existence_evolution} reduces to the reformulation of
(\ref{evolEqTrunc}) as an integral equation by means of an application of the
Duhamel principle. The well-posedness result then follows by means of a
standard fixed point argument. The estimate (\ref{fEstim}) is a consequence of
the inequality $\partial_{t}\left(  \int_{\mathbb{R}_{*}^{d}}f(dx,t)\right)
\leq\int_{\mathbb{R}^d_{*}}\eta(dx)$ which follows by integration of
(\ref{evolEqTrunc}). Given that the argument is just a small adaptation of the
similar one in \cite[Proposition 3.6]{FLNV} we will skip the details of the proof.

The solution $f$ obtained in Proposition
\ref{thm:existence_evolution} has a number of useful properties needed later: 
\begin{proposition}
\label{PropSolutTimeDep}Suppose that Assumption \ref{Assumptions} holds. Let
$f_{0}\in{\mathcal{M}_{+,b}(\mathbb{R}_{*}^{d})}$ be as in the statement of
Proposition \ref{thm:existence_evolution} and let $f\in{C^{1}(\left[  0,T\right]
,\mathcal{M}_{+,b}(\mathbb{R}_{*}^{d}))}$ be the solution to (\ref{evolEqTrunc})
which has been obtained in Proposition \ref{thm:existence_evolution}. Then,
the following properties hold.

\begin{itemize}
\item[(i)] The function $f$ is a weak solution to (\ref{evolEqTrunc}), i.e.,
for a test function $\varphi\in C^{1}(\left[  0,T\right]  ,C_{c}\left(
{\mathbb{R}}_{\ast}^{d}\right)  )$ and any $T>0$ the following identity holds 
\begin{align}
&  \frac{d}{dt}\int_{{\mathbb{R}_{*}^{d}}}\varphi\left(  x,t\right)  f\left(
dx,t\right)  -\int_{{\mathbb{R}_{*}^{d}}}\partial_{t}\varphi\left(
x,t\right)  f\left(  dx,t\right) \nonumber\\
&  =\frac{1}{2}\int_{{\mathbb{R}_{*}^{d}}}\int_{{\mathbb{R}_{*}^{d}}}%
K_{M}\left(  x,y\right)  \left[  \varphi\left(  x+y,t\right)  \zeta_{M}\left(
x+y\right)  -\varphi\left(  x,t\right)  -\varphi\left(  y,t\right)  \right]
f\left(  dx,t\right)  f\left(  dy,t\right) \nonumber\\
&  \quad+\int_{{\mathbb{R}_{*}^{d}}}\varphi\left(  x,t\right)  \eta\left(
dx\right)\,, \qquad \text{for every } t\in\left[  0,T\right] \,. \label{eq:evol_eqWeak}%
\end{align}

\item[(ii)] The following inequality is satisfied%
\begin{equation}
\partial_{t}\left(  \int_{\mathbb{R}_{*}^{d}}f\left(  dx,t\right)  \right)
\leq-\frac{a_{1}}{2}\left(  \int_{\mathbb{R}_{*}^{d}}f\left(  dx,t\right)
\right)  ^{2}+\int_{\mathbb{R}_{*}^{d}}\eta\left(  dx\right)  .
\label{IneqQuad}%
\end{equation}

\item[(iii)] For each $M>1$ we define a topological vector space
\[
\mathcal{X}_{M}=\left\{  f\in{\mathcal{M}_{+,b}(\mathbb{R}_{*}^{d})} : f\left(
\left\{  x\in{\mathbb{R}_{*}^{d}}:|x|<1\text{ or }\left\vert x\right\vert
>M\right\}  \right)  =0\right\}
\]
endowed with the weak topology of measures, i.e., the $*$-weak topology inherited as a closed subspace of $C_0(\R^d_*)^*$.  If $R>0$, the subset $\mathcal{U}_R := \defset{f\in \mathcal{X}_M}{\int_{\R^d_*}f(dx)\le R}$ is compact and metrizable in this topology.
For each $t>0$ we define an operator $S\left(  t\right)
:\mathcal{X}_{M}\rightarrow\mathcal{X}_{M}$ by means of $S\left(  t\right)
f_{0}=f\left(  \cdot,t\right)  ,$ with $f\left(  \cdot,t\right)  $ as in
Proposition \ref{thm:existence_evolution}. Then the family of operators
$\left\{  S\left(  t\right)  \right\}  _{t\geq0}$ define a continuous
semigroup in $\mathcal{X}_{M}.$ More precisely, we have 
\begin{equation}
S\left(  0\right)  =I\ \ ,\ \ S\left(  t_{1}+t_{2}\right)  =S\left(
t_{1}\right)  S\left(  t_{2}\right)  \text{ for each }t_{1},t_{2}\in
\mathbb{R}_{+}\ ,\ \label{SemProps}%
\end{equation}
the mapping $f\mapsto S\left(  t\right)  f$ with $f\in\mathcal{X}_{M}$
is continuous in $\mathcal{X}_{M}$ for each $t\geq0.$ Moreover, the mapping
$t\mapsto S\left(  t\right)  f$ from $\mathbb{R}_{+}$ to $\mathcal{X}_{M}$
is continuous for each $f\in\mathcal{X}_{M}.$
\end{itemize}
\end{proposition}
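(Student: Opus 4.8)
The plan is to read off the three items from Proposition~\ref{thm:existence_evolution} together with the structure of the cutoff model (Assumption~\ref{Assumptions}), the support property \eqref{fVanish}, the mass bound \eqref{fEstim}, and uniqueness. For (i), since $f\in C^1([0,T],\mathcal M_{+,b}(\mathbb R^d_*))$ solves \eqref{evolEqTrunc} in the classical sense, I differentiate $t\mapsto\int\varphi(x,t)f(dx,t)$ for $\varphi\in C^1([0,T],C_c(\mathbb R^d_*))$, getting $\int\partial_t\varphi\,f(dx,t)$ plus $\int\varphi\,\partial_t f(dx,t)$, and into the latter I insert the right-hand side of \eqref{evolEqTrunc}. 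In the gain term, writing $x=u+v$ with $u=x-y$, $v=y$, the integration over $\{0<y<x\}$ becomes $\tfrac12\iint K_M(u,v)\varphi(u+v,t)\zeta_M(u+v)f(du,t)f(dv,t)$; in the loss term the symmetry of $K_M$ lets me replace $\varphi(x,t)$ by $\tfrac12(\varphi(x,t)+\varphi(y,t))$. Collecting the three contributions gives \eqref{eq:evol_eqWeak}, and all Fubini interchanges are legitimate because $K_M$ is bounded, $\varphi(\cdot,t)$ has compact support, $f(\cdot,t)$ is finite, and $\eta$ is bounded. For (ii), I apply (i) with a time-independent $\chi\in C_c(\mathbb R^d_*)$, $0\le\chi\le1$, with $\chi\equiv1$ on $\{|x|\le 3M\}$: by \eqref{fVanish} and Assumption~\ref{Assumptions}(i), $f(\cdot,t)$ is carried by $\{1\le|x|\le M\}$ and $\eta$ by $\{1\le|x|\le L\}\subset\{|x|\le M/2\}$, so $\chi\equiv1$ on these sets and $\chi(x+y)=1$ on $\supp(f(\cdot,t)\otimes f(\cdot,t))$. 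Then \eqref{eq:evol_eqWeak} reduces to $\partial_t\!\int f(dx,t)=\tfrac12\iint K_M(x,y)[\zeta_M(x+y)-2]f(dx,t)f(dy,t)+\int\eta(dx)$, and using $\zeta_M\le1$ (so $\zeta_M(x+y)-2\le-1$) together with $K_M\ge a_1$ on $\{(|x|,|y|)\in[1,M]^2\}$ yields \eqref{IneqQuad}.

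For the elementary parts of (iii): every $f\in\mathcal X_M$ is carried by the compact set $\bar B:=\{x\in\mathbb R^d_*:1\le|x|\le M\}$, so $\mathcal X_M\subset C_0(\mathbb R^d_*)^*$ and, $C_0(\mathbb R^d_*)$ being separable, the weak-$*$ topology is metrizable on norm-bounded subsets; $\mathcal U_R$ is norm-bounded and weak-$*$ closed (positivity, the bound $\|f\|\le R$, and ``carried by $\bar B$'' are weak-$*$ closed conditions), hence weak-$*$ compact by Banach--Alaoglu. By \eqref{fVanish}, $S(t)$ maps $\mathcal X_M$ into itself, and by \eqref{fEstim} it sends $\mathcal U_R$ into $\mathcal U_{R+Ct}$. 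The identities \eqref{SemProps} follow from $f(\cdot,0)=f_0$ and from the autonomy of \eqref{evolEqTrunc}: by the uniqueness in Proposition~\ref{thm:existence_evolution}, the solution issued from $S(t_2)f_0$ equals $s\mapsto f(\cdot,t_2+s)$, whence $S(t_1)S(t_2)=S(t_1+t_2)$. Finally $t\mapsto S(t)f$ is continuous because $t\mapsto f(\cdot,t)$ is $C^1$, hence continuous, in total variation, which dominates the weak-$*$ topology.

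The part I expect to be the main obstacle is the continuity of $f_0\mapsto S(t)f_0$ in the weak-$*$ topology. The quick Gronwall estimate — differentiate $\int\varphi(f_t-\tilde f_t)$, use $|K_M|\le a_2$, the mass bound \eqref{fEstim}, and cancel the source — only gives Lipschitz continuity in total variation, which is strictly stronger than the weak-$*$ topology carried by $\mathcal X_M$, so it does not suffice. Instead I would argue by compactness: fix $t$ and $R>\|f_0\|$ and take $f_0^{(n)}\to f_0$ weak-$*$ in $\mathcal U_R$ (sequences suffice, since $\mathcal U_R$ is metrizable and $\{g\in\mathcal X_M:\|g\|<R\}$ is a weak-$*$ open neighbourhood of $f_0$ inside it). Put $f^{(n)}(\cdot,s)=S(s)f_0^{(n)}$; by \eqref{fEstim} these lie in $\mathcal U_{R+Ct}$ uniformly for $s\in[0,t]$, and \eqref{eq:evol_eqWeak} with time-independent test functions shows $s\mapsto\int\varphi\,f^{(n)}(\cdot,s)$ is Lipschitz with a constant depending only on $\|\varphi\|_\infty$, $R$, $t$, $a_2$, $\|\eta\|$; hence $\{f^{(n)}\}$ is equicontinuous as a family of maps from $[0,t]$ into $\mathcal U_{R+Ct}$ with its weak-$*$ topology, and by Arzel\`a--Ascoli a subsequence converges uniformly to some $\bar f\in C([0,t],\mathcal U_{R+Ct})$. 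Along this subsequence one passes to the limit in the integrated form of \eqref{eq:evol_eqWeak}: $f^{(n)}(\cdot,s)\otimes f^{(n)}(\cdot,s)\to\bar f(\cdot,s)\otimes\bar f(\cdot,s)$ weak-$*$ for each $s$, the weight $K_M(x,y)[\varphi(x+y)\zeta_M(x+y)-\varphi(x)-\varphi(y)]$ is continuous with compact support ($K_M$ vanishes once $|x|\ge 2M$ or $|y|\ge 2M$), and the $s$-integration is controlled by dominated convergence via the uniform mass bound. Thus $\bar f$ is readily checked to be a classical solution with $\bar f(\cdot,0)=f_0$, so by uniqueness $\bar f(\cdot,s)=S(s)f_0$; since every subsequence of $(f_0^{(n)})$ has a further subsequence with this same limit, $S(t)f_0^{(n)}\to S(t)f_0$ weak-$*$, which gives the desired continuity. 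Justifying this passage to the limit uniformly in $s$ in the nonlinear term — and the equicontinuity that makes it possible — is the delicate step; the remaining estimates are routine and closely parallel \cite{FLNV}.
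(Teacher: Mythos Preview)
Your treatment of (i), (ii), and the routine parts of (iii) (metrizability and compactness of $\mathcal U_R$, the semigroup identities, time continuity) is correct and matches the paper almost verbatim. The one place where you genuinely diverge from the paper is the weak-$*$ continuity of $f_0\mapsto S(t)f_0$. The paper does \emph{not} use a compactness argument: it runs the test function backwards through the (linear, $f$-dependent) adjoint equation to produce $\varphi_0\in C_c(\mathbb R^d_*)$ with $\int\varphi\,f(dx,t)=\int\varphi_0\,f_0(dx)$ (plus a source contribution), and then continuity in $f_0$ is immediate once one checks that $\varphi_0$ depends continuously on $f_0$ in sup norm. Your Arzel\`a--Ascoli route is a legitimate alternative and has the advantage of being more robust (it does not require setting up and analysing the adjoint flow), while the duality argument is shorter and gives an explicit modulus of continuity.

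There is, however, one step in your compactness argument that is not quite ``readily checked'': identifying the limit $\bar f$ with $S(\cdot)f_0$. You obtain $\bar f\in C([0,t],\mathcal U_{R+Ct})$ satisfying the \emph{integrated} weak form, but Proposition~\ref{thm:existence_evolution} asserts uniqueness only for classical $C^1$-in-TV solutions, and upgrading $\bar f$ from weak-$*$ continuous to TV-$C^1$ is not automatic (the loss term $L[\bar f(s)]\bar f(s)$ is not obviously TV-continuous in $s$ when $\bar f$ is only weak-$*$ continuous). The clean fix is to prove uniqueness directly in the class of integrated weak solutions: if $\bar f$ and $g:=S(\cdot)f_0$ both satisfy the integrated weak form with the same initial datum, subtract, bound $|\langle\varphi,\mathcal N[\bar f]-\mathcal N[g]\rangle|\le C(R+Ct)\|\varphi\|_\infty\|\bar f-g\|_{TV}$ using $K_M\le a_2$ and the factorisation $\mu\otimes\mu-\nu\otimes\nu=(\mu-\nu)\otimes\mu+\nu\otimes(\mu-\nu)$, and apply Gronwall to $s\mapsto\|\bar f(s)-g(s)\|_{TV}$ (this function is lower semicontinuous, hence measurable, which is enough). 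With this patch your argument is complete.
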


\begin{proof}
All the results contained in Proposition \ref{PropSolutTimeDep} are small
adaptations of similar results contained in \cite{FLNV}. Therefore, we just
sketch the main ideas and refer to that paper for details.

The proof of $(i)$ is a consequence of the fact that $f$ is a classical
solution to (\ref{evolEqTrunc}). We then compute $\frac{d}{dt}\left(
\int_{{\mathbb{R}^d_{*}}}\varphi\left(  x,t\right)  f\left(  dx,t\right)
\right)  $ and write it in terms of the coagulation kernel using
\eqref{evolEqTrunc}. Using the regularity properties of $\varphi$ and $f$, and
applying Fubini's theorem to rewrite the term containing convolution in
\eqref{evolEqTrunc}, we obtain \eqref{eq:evol_eqWeak}.

Inequality (\ref{IneqQuad}) in item $(ii)$ follows using a test function
$\varphi\left(  x,t\right)  $ which takes the value $1$ for $1\leq |x|\leq M,$
combined with the fact that $\zeta_{M}\leq1$ and that $K\left(  x,y\right)
\geq a_{1}>0$ in the support of the measure $f\left(  dx,t\right)  f\left(
dy,t\right)  .$

The statements in item $(iii)$  about the space $\mathcal{X}_{M}$ 
and its intersections with 
norm-topology balls,  $\mathcal{U}_{R}$, 
follow from standard results in functional analysis, in particular, 
by using the Banach--Alaoglu theorem; more details may be found in \cite{FLNV}.
The properties of $S\left(  t\right)  $ in
(\ref{SemProps}) follow from the definition of $S\left(  t\right)  $ and the
uniqueness result in Proposition \eqref{thm:existence_evolution}. The only
nontrivial results to be proven are the continuity properties of $S\left(
t\right)  .$ The continuity of $t\mapsto S\left(  t\right)  f$ follows from
the differentiability of $f$ in the $t$ variable, yielding even Lipschitz-type 
estimates for the dependence, cf.\ Eq.\ (3.20) in \cite{FLNV}.  

The continuity of $f\mapsto S\left(  t\right)  f$ in 
the weak topology of measures is the most involved part of the results but it
can also be proven as in \cite{FLNV}. 
The basic idea is to prove that the mappings $f_{0}%
\mapsto\int_{\mathbb{R}_{*}^{d}}\varphi\left(  x\right)  f\left(
dx,t\right)  $ change continuously for every test function $\varphi\in
C_{c}\left(  \mathbb{R}_{*}^{d}\right)  .$ Considering the evolution of
$\varphi$ in terms of the adjoint equation from the time $t$ to the time $0$
we obtain a new function, denoted as $\varphi_{0}\in C_{c}\left(
\mathbb{R}_{*}^{d}\right)  $, such that 
\[
\int_{\mathbb{R}_{*}^{d}}\varphi\left(  x\right)  f\left(  dx,t\right)
=\int_{\mathbb{R}_{*}^{d}}\varphi_{0}\left(  x\right)  f_{0}\left(  dx\right)
.
\]
Therefore, $\int_{\mathbb{R}_{*}^{d}}\varphi\left( x\right)  f\left(
dx,t\right)  $ changes continuously if $\int_{\mathbb{R}_{*}^{d}}\varphi
_{0}\left( x\right)  f_{0}\left(  dx\right)  $ does. This gives the
desired continuity in the weak topology of measures and the result follows.  
\end{proof}

We can now prove the existence of stationary injection solutions to
(\ref{evolEqTrunc}) in the sense of Definition \ref{DefStatSol}.  We observe that, to  prove the existence  of stationary solutions, finding fixed points for the corresponding evolution semigroup has often been used in the study of coagulation equations. We refer for instance to \cite{EMR, FL05, NV, NNTV, NTV}. Similar ideas have been used also to construct stationary solutions of more general classes of kinetic equations. See for instance \cite{GPV, KV, JNV}.

\begin{proposition}
\label{thm:existence_truncated} Under the assumptions of
Proposition~\ref{thm:existence_evolution}, there exists a stationary injection
solution $\hat{f}\in\mathcal{M}_{+,b}(\mathbb{R}_{*}^{d})$ to (\ref{evolEqTrunc}%
) as in Definition~\ref{DefStatSol}.
\end{proposition}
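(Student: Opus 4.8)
The plan is to obtain $\hat f$ as a fixed point of the semigroup $\{S(t)\}_{t\ge 0}$ constructed in Proposition \ref{PropSolutTimeDep}, using a Tikhonov/Schauder-type fixed point argument on a compact convex set. First I would fix a radius $R>0$ large enough that the set $\mathcal{U}_R=\defset{f\in\mathcal{X}_M}{\int_{\R^d_*}f(dx)\le R}$ is invariant under $S(t)$ for all $t\ge 0$. This follows from the quadratic differential inequality \eqref{IneqQuad}: if $y(t)=\int f(dx,t)$ then $y'\le -\frac{a_1}{2}y^2+C$ with $C=\int\eta(dx)$, so any $R$ with $\frac{a_1}{2}R^2\ge C$ (e.g.\ $R=\sqrt{2C/a_1}$, or $\max$ of this and $\int f_0(dx)$) gives $y(t)\le R$ for all $t$ once $y(0)\le R$; hence $S(t)\mathcal{U}_R\subset\mathcal{U}_R$. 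By Proposition \ref{PropSolutTimeDep}(iii), $\mathcal{U}_R$ is convex, compact and metrizable in the weak-$*$ topology, and each $S(t):\mathcal{U}_R\to\mathcal{U}_R$ is continuous.

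Next I would apply a fixed point theorem for a commuting family of continuous maps on a compact convex set. The cleanest route is to pick a sequence $t_n\to 0^+$ (say $t_n=1/n$) and for each $n$ invoke the Schauder--Tikhonov theorem to get $f_n\in\mathcal{U}_R$ with $S(t_n)f_n=f_n$. By the semigroup property \eqref{SemProps}, $S(kt_n)f_n=f_n$ for all $k\in\N$, so $f_n$ is fixed by $S(t)$ for all $t$ in the dense set $\{k/n:k\in\N_0\}\cap\R_+$; by continuity of $t\mapsto S(t)f_n$ this gives $S(t)f_n=f_n$ for all $t\ge 0$. (Alternatively, one can apply the Markov--Kakutani fixed point theorem directly to the commuting family $\{S(t)\}_{t\ge 0}$ on $\mathcal{U}_R$ and obtain a common fixed point $\hat f$ in one step.) Either way we produce $\hat f\in\mathcal{U}_R\subset\mathcal{M}_{+,b}(\R^d_*)$ with $\hat f(\{|x|<1\text{ or }|x|>M\})=0$ and $S(t)\hat f=\hat f$ for all $t\ge 0$.

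It then remains to check that such a stationary point of the semigroup is a stationary injection solution in the sense of Definition \ref{DefStatSol}. For this I would use the weak formulation \eqref{eq:evol_eqWeak} from Proposition \ref{PropSolutTimeDep}(i) with a time-independent test function $\varphi\in C_c(\R^d_*)$: since $f(\cdot,t)=S(t)\hat f=\hat f$ is constant in $t$, the left-hand side $\frac{d}{dt}\int\varphi\, f(dx,t)-\int\partial_t\varphi\, f(dx,t)$ vanishes identically, and the right-hand side is exactly the right-hand side of \eqref{eq:evol_eqWeakSt}. This yields \eqref{eq:evol_eqWeakSt} for all $\varphi\in C_c(\R^d_*)$, as required, with $\hat f\in\mathcal{M}_{+,b}(\R^d_*)$ supported in $\{1\le|x|\le M\}$.

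The main obstacle is the application of the fixed point theorem: one must be careful that the weak-$*$ topology on $\mathcal{U}_R$ is exactly the one in which compactness, metrizability and continuity of $S(t)$ were established in Proposition \ref{PropSolutTimeDep}(iii) (as a closed bounded subset of $C_0(\R^d_*)^*$), so that the hypotheses of Schauder--Tikhonov (or Markov--Kakutani) are genuinely met; convexity of $\mathcal{U}_R$ is immediate, but one should also confirm it is nonempty and that $0\in\mathcal{X}_M$ or otherwise that invariance does not collapse the set. Everything else — the invariance estimate from \eqref{IneqQuad}, the semigroup identities, and the passage from stationarity of the flow to the weak stationary equation — is routine given the preceding propositions, and closely parallels the one-component argument in \cite{FLNV}.
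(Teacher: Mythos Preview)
Your overall strategy matches the paper's: establish that $\mathcal{U}_R$ is invariant via \eqref{IneqQuad}, apply Schauder to each $S(\delta)$ on the compact convex metrizable set $\mathcal{U}_R$, and then pass from discrete-time fixed points to a common fixed point of the whole semigroup. The verification that a semigroup fixed point satisfies \eqref{eq:evol_eqWeakSt} is also fine.

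However, the step where you pass to a common fixed point contains a genuine error. For a \emph{fixed} $n$, the set $\{k/n:k\in\N_0\}$ is a discrete lattice in $\R_+$, not a dense subset; continuity of $t\mapsto S(t)f_n$ therefore does \emph{not} extend $S(k/n)f_n=f_n$ to all $t\ge 0$. Each $f_n$ is only a periodic point of period $1/n$, and the $f_n$ for different $n$ need not coincide. Your alternative route via Markov--Kakutani also fails as stated: that theorem requires the maps to be \emph{affine}, while $S(t)$ is nonlinear (the coagulation operator is quadratic in $f$), so it does not apply to this family.

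The correct fix --- and what the paper does, by invoking Theorem~1.2 of \cite{EMR} --- is to use sequential compactness of $\mathcal{U}_R$ to extract a weak-$*$ limit $\hat f$ of a subsequence $\hat f_{\delta_n}$ with $\delta_n\to 0$, and then show $S(t)\hat f=\hat f$ for every $t>0$. Writing $t=k_n\delta_n+r_n$ with $0\le r_n<\delta_n$, one has $S(t)\hat f_{\delta_n}=S(r_n)\hat f_{\delta_n}$; the conclusion then follows from (a) continuity of $f\mapsto S(t)f$ on $\mathcal{U}_R$ and (b) equicontinuity of $t\mapsto S(t)f$ at $t=0$ uniformly over $f\in\mathcal{U}_R$, the latter coming from the Lipschitz-in-$t$ estimate noted in the proof of Proposition~\ref{PropSolutTimeDep}. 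You should replace your density/Markov--Kakutani paragraph with this compactness-plus-equicontinuity argument.
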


\begin{proof}
The argument is analogous to the one in \cite{FLNV}, and we again merely provide a sketch
of the main details. The key idea is to construct an invariant region in the
space $\mathcal{X}_{M}$ under the evolution semigroup $S\left(  t\right)  .$
To this end, notice that (\ref{IneqQuad}) implies that for $R\geq\sqrt
{\frac{2\int_{\mathbb{R}_{*}^{d}}\eta\left(  dx\right)  }{a_{1}}}$ the set
\begin{equation}
\mathcal{U}_{R}=\left\{  f\in\mathcal{X}_{M}:\int_{{\mathbb{R}}_{\ast}^{d}%
}f(dx)\leq R\right\}  \label{eq:invariant_region}%
\end{equation}
is invariant under the evolution semigroup $S\left(  t\right)  ,$ i.e.,
$S\left(  t\right)  \left(  \mathcal{U}_{R}\right)  \subset\mathcal{U}_{R}.$ 
By Proposition \ref{PropSolutTimeDep},
the set $\mathcal{U}_{R}$ is convex and compact in the weak topology of
measures. Since the operator $S\left(  \delta\right)  :\mathcal{U}%
_{R}\rightarrow\mathcal{U}_{R}$ is continuous in the same topology,
it follows from Schauder's fixed point theorem that there
exists a fixed point $\hat{f}_{\delta}$ for each $\delta>0.$ 
Since $\mathcal{U}_{R}$ is metrizable and hence sequentially compact, 
we can use Theorem 1.2 of \cite{EMR}, and conclude that 
that there is $\hat{f}$ such that 
$S(t)\hat{f} = \hat{f}$ for all $t$.
Thus $\hat{f}$ is a stationary injection solution to
\eqref{evolEqTrunc}.
\end{proof}

We can now prove Theorem \ref{ThmExistCont}.

\bigskip

\begin{proofof}
[Proof of Theorem \ref{ThmExistCont}]Due to Lemma \ref{EquivDefin}, item
$(i)$, it is enough to prove the result for $p=0$ and $\gamma<1.$ Therefore,
we will restrict our attention to kernels of the form 
\begin{equation}
K\left(  x,y\right)  =\left(  \left\vert x\right\vert +\left\vert y\right\vert
\right)  ^{\gamma}\Phi\left(  x,y\right)  \label{KernType}%
\end{equation}
where $\Phi$ is continuous, symmetric, and
\begin{equation}
0<C_{1}\leq\Phi\left(  x,y\right)  \leq C_{2}<\infty\ \ ,\ \ \text{for }\;
x,y\in\mathbb{R}_{*}^{d} .\label{eq:B1bound}%
\end{equation}
We recall that we do not assume that the kernels $K\left(  x,y\right) $ are
homogeneous for this result. 

We define a class of truncated kernels by means of%
\begin{equation}
K_{\varepsilon}\left(  x,y\right)  =\min\left\{  \left(  \left\vert
x\right\vert +\left\vert y\right\vert \right)  ^{\gamma},\frac{1}{\varepsilon
}\right\}  \Phi\left(  x,y\right)  +\varepsilon\ \ ,\ \ \varepsilon>0
\label{eq:1levTrunc}%
\end{equation}
The kernels $K_{\varepsilon}$ are bounded in $\mathbb{R}_{*}^{d}%
\times\mathbb{R}_{*}^{d}$ for each $\varepsilon>0.$ Moreover, they satisfy
also $K_{\varepsilon}\left(  x,y\right)  \geq\varepsilon>0$ for any $\left(
x,y\right)  \in\mathbb{R}_{*}^{d}\times\mathbb{R}_{*}^{d}.$ 
We will assume in the following that $\varepsilon\le 1$, and add further restrictions to
its upper bound later on.
Note that if
$\gamma\leq0$, the truncation by the minimum in (\ref{eq:1levTrunc}) will not 
have any effect
since we are interested in solutions supported in $\left\vert x\right\vert
\geq1$ and $\left\vert y\right\vert \geq1.$

We now introduce another truncation to obtain compactly supported kernels. 
To this end we choose a symmetric function $o_{M}\in C_{c}(\mathbb{R}_{*}\times\mathbb{R}_{*})$  such
that for $x,y\in \mathbb{R}_{*}^{d}$, $r,s>0$,
\begin{equation}
\label{omegcutof}
\omega_M(x,y):=o_M(|x|,|y|)\,, \qquad
o_{M}(r,s)=
\begin{cases}
 1\,, & \text{if }
1\le r,s\le M\,,\\
 0\,, & \text{if }
 r \geq 2 M \text{ or } s \geq 2 M\,.
\end{cases}
\end{equation}
Note that then $\omega_{M}\in C_{c}(\mathbb{R}_{*}^{d}\times\mathbb{R}_{*}^{d})$ and it is symmetric.
We then define truncated kernels $K_{\varepsilon,M}$ by means of%
\begin{equation}
K_{\varepsilon,M}\left(  x,y\right)  =K_{\varepsilon}\left(  x,y\right)
\omega_{M}\left(  x,y\right) \,, \qquad  x,y\in \mathbb{R}_{*}^{d}\,. \label{eq:2levTrunc}%
\end{equation}
Each of these kernels satisfies the requirements of $K_M$ in Assumption \ref{Assumptions}.

We finally choose a function $p_{M}\in C(\R_*)$ satisfying Assumption
\ref{Assumptions} for $d=1$.  Then defining $\zeta_{M}\left(
x\right)  := p_{M}\left(  \left\vert x\right\vert \right)$ satisfies Assumption
\ref{Assumptions} for general $d$.
The hypothesis of Theorem \ref{ThmExistCont} imply that $\eta$ satisfies the
requirements for this function in Assumption \ref{Assumptions}. 
Therefore, the above choices result in a system which satisfies all 
conditions required in Assumption \ref{Assumptions}, and thus
Proposition~\ref{thm:existence_truncated} implies the existence of a
stationary injection solution $f_{\varepsilon,M}\in{\mathcal{M}_{+,b}(\mathbb{R}_{*}^{d})}$ satisfying $f_{\varepsilon,M}\left(
\left\{  x\in{\mathbb{R}_{*}^{d}}:|x|<1\text{ or }\left\vert x\right\vert
>M\right\}  \right)  =0$ and 
\begin{align}
&  \frac{1}{2}\int_{{\mathbb{R}_{*}^{d}}}\int_{{\mathbb{R}_{*}^{d}}%
}K_{\varepsilon,M}\left(  x,y\right)  \left[  \varphi\left(  x+y\right)
\zeta_{M}\left(  x+y\right)  -\varphi\left(  x\right)  -\varphi\left(
y\right)  \right]  f_{\varepsilon,M}\left(  dx\right)  f_{\varepsilon
,M}\left(  dy\right) \label{WeakTruncForm}\\
&  +\int_{{\mathbb{R}_{*}^{d}}}\varphi\left(  x\right)  \eta\left(
dx\right)  =0\nonumber
\end{align}
for any test function $\varphi\in{C}_{c}({\mathbb{R}}^d_{*})$.

We now derive uniform estimates for the family of solutions $f_{\varepsilon
,M}$ in order to take the limits $M\rightarrow\infty$ and then $\varepsilon
\rightarrow0.$  To this end, it will be convenient to use the
coordinates $\left(  r,\theta\right)  $ introduced in \eqref{S1E1} and defined
as $r=\left\vert x\right\vert ,\ \theta=\frac{x}{\left\vert x\right\vert }.$
We write also $\rho=\left\vert y\right\vert ,\ \sigma=\frac{y}{\left\vert
y\right\vert }$ as well as $\varphi\left(  x\right)  =\psi\left(
r,\theta\right)$, $K_{\varepsilon,M}\left(  x,y\right)
=G_{\varepsilon,M}\left(  r,\rho;\theta,\sigma\right)$,
and $f_{\varepsilon,M}\left(  x\right)  =F_{\varepsilon,M}\left(  r,\theta\right)$,
as defined in (\ref{G2}).
Then
(\ref{WeakTruncForm}) becomes%
\begin{align}
&  \frac{1}{2 d}\int_{\mathbb{R}_{*}}r^{d-1}dr\int_{\mathbb{R}_{*}}\rho^{d-1}d\rho
\int_{\Delta^{d-1}}d\tau\left(  \theta\right)  \int_{\Delta^{d-1}}d\tau\left(
\sigma\right)  G_{\varepsilon,M}\left(  r,\rho;\theta,\sigma\right)
F_{\varepsilon,M}\left(  r,\theta\right)  F_{\varepsilon,M}\left(  \rho
,\sigma\right)  \times\nonumber\\
&  \quad\times\left[ p_M(r+\rho) \psi\left(  r+\rho,\frac{r}{r+\rho}\theta+\frac{\rho
}{r+\rho}\sigma\right)  -\psi\left(  r,\theta\right)  -\psi\left(  \rho
,\sigma\right)  \right]  +\int_{{\mathbb{R}_{*}^{d}}}\varphi\left(
x\right)  \eta\left(  dx\right)  =0. \label{WeakTrunc}%
\end{align}

Given $z,\delta >0$, we introduce a function 
$\chi_{\delta}\in C^\infty_c({\mathbb{R}}_{*})$ such that $0\le \chi_{\delta}\le 1$, 
$\chi_{\delta}(s)=1$ for $1\le \left\vert s\right\vert \leq z$, and $\chi_{\delta}(s)=0$ for $\left\vert s\right\vert \geq z+\delta$.
 We then take the radial test function $\varphi
(x)=\left\vert x\right\vert \chi_{\delta}\left(  \left\vert x\right\vert
\right)$ for $x\in \R^d_*$.  Then $\psi\left(  r,\theta\right)  =r\chi_{\delta}\left(
r\right)  .$ Plugging this test function into (\ref{WeakTrunc}), and using $p_M\le 1$, we obtain%
\begin{align*}
&  \int_{\mathbb{R}_{*}}r^{d-1}dr\int_{\mathbb{R}_{*}}\rho^{d-1}d\rho
\int_{\Delta^{d-1}}d\tau\left(  \theta\right)  \int_{\Delta^{d-1}}d\tau\left(
\sigma\right)  G_{\varepsilon,M}\left(  r,\rho;\theta,\sigma\right)
F_{\varepsilon,M}\left(  r,\theta\right)  F_{\varepsilon,M}\left(  \rho
,\sigma\right)  \times\\
&  \quad\times \left[  \left(  r+\rho\right)  \chi_{\delta}\left(  r+\rho\right)
-r\chi_{\delta}\left(  r\right)  -\rho\chi_{\delta}\left(  \rho\right)
\right]  +2 d\int_{{\mathbb{R}_{*}^{d}}}\left\vert x\right\vert \chi_{\delta
}\left(  \left\vert x\right\vert \right)  \eta\left(  dx\right)  \ge 0.
\end{align*}
Taking the limit $\delta\rightarrow0$ and using arguments analogous to
the ones in the proof of Lemma 2.7 in \cite{FLNV}, yields
\begin{align}
& \frac{1}{d} \int_{\left(  0,z\right]  }r^{d}dr\int_{\left(  z-r,\infty\right)  }%
\rho^{d-1}d\rho\int_{\Delta^{d-1}} d\tau\left(  \theta\right)  \int_{\Delta^{d-1}} d\tau\left(  \sigma\right)  G_{\varepsilon,M}\left(  r,\rho;\theta,\sigma\right)  F_{\varepsilon
,M}\left(  r,\theta\right)  F_{\varepsilon,M}\left(  \rho,\sigma\right)
\label{FluxTruncMulti}  \nonumber \\
&  \le \int_{\left\{ 0< \left\vert x\right\vert \leq z\right\}  }\left\vert
x\right\vert \eta\left(  dx\right) \,, \qquad \text{for any }z>0\,.
\end{align}

We can now derive uniform estimates for the measures
$F_{\varepsilon,M}$ arguing as in \cite{FLNV}. We have $d \int_{\left\{
\left\vert x\right\vert \leq z\right\}  }\left\vert x\right\vert \eta\left(
dx\right)  \leq d\int_{\left\{  \left\vert x\right\vert \leq L\right\}
}\left\vert x\right\vert \eta\left(  dx\right) =: c.$ On the other hand
(\ref{eq:1levTrunc}), (\ref{eq:2levTrunc}) imply that $K_{\varepsilon,M
}\left(  x,y\right)  =G_{\varepsilon,M}\left(  r,\rho;\theta,\sigma\right)
\geq\varepsilon>0$ for $\left(  r,\rho\right)  \in\left[  1,M\right]  ^{2}.$
Thus we may use the information about the support of $f_{\vep,M}$ to conclude that
for any $z>0$
\[
\varepsilon\int_{\left(  0,z\right]  }r^{d}dr\int_{\left(  z-r,\infty\right)
}\rho^{d-1}d\rho \int_{\Delta^{d-1}} d\tau\left(  \theta\right)  \int_{\Delta^{d-1}} d\tau\left(  \sigma\right)  
F_{\varepsilon,M}\left(  r,\theta\right)  F_{\varepsilon,M}\left(
\rho,\sigma\right)  \leq c\,.
\]

Assume next $z\in [1,M]$. Then $[2z/3,z]^{2}\subset\left\{  \left(  x,y\right)
\in\mathbb{R}_{+}^{2}:0<x\leq z,\ z-x<y\leq M\right\}  $, and we obtain
\[
\varepsilon z^{2d-1}\left(  \int_{\left[  \frac{2z}{3},z\right]  }%
dr\int_{\Delta^{d-1}}d\tau\left(  \theta\right)  F_{\varepsilon,M}\left(  r,\theta\right)  \right)  ^{2}\leq C\,.
\]
Therefore,
\[
\frac{1}{z}\int_{\left[  \frac{2z}{3},z\right]  }dr\int_{\Delta^{d-1}}%
d\tau\left(  \theta\right)  F_{\varepsilon,M}\left(
r,\theta\right)  \leq\frac{C_{\varepsilon}}{z^{\frac{2d+1}{2}}}\ \ \ \text{for
}0<z\leq M
\]
where $C_{\varepsilon}$ is a numerical constant depending on $\varepsilon$ but
independent of $M$. Then, Lemma \ref{lem:bound} implies%
\begin{equation}
\int_{\R^d_*}f_{\varepsilon,M}\left(  dx\right) =
\int_{\left\{  1\le \left\vert x\right\vert \leq M\right\}
}f_{\varepsilon,M}\left(  dx\right) =\int_{[1,M]}r^{d-1}%
dr\int_{\Delta^{d-1}} 
d\tau\left(  \theta\right) F_{\varepsilon,M}\left(  r,\theta\right)  \leq\bar{C}_{\varepsilon
}\,.\label{S2E8}%
\end{equation}
with $\bar{C}_{\varepsilon}$ independent on $M.$

By the earlier mentioned sequential compactness, this bound implies that 
for each $\varepsilon>0$ there exists a sequence $\left\{
M_{n}\right\}  _{n\in\mathbb{N}}$ with $\lim_{n\rightarrow\infty}M_{n}=\infty$
and $f_{\varepsilon}\in\mathcal{M}_{+,b}\left(  \mathbb{R}_{*}^{d}\right)  $
such that $f_{\varepsilon,M_{n}}\rightarrow f_{\varepsilon}$ in the weak
topology of measures. Moreover, we have%
\[
\int_{\mathbb{R}_{*}^{d}}f_{\varepsilon}\left(  dx\right)  \leq\bar
{C}_{\varepsilon}%
\]
and also $f_{\varepsilon}\left(
\left\{  x\in{\mathbb{R}_{*}^{d}}:|x|<1\right\}  \right)  =0$. 

We now notice
that $\zeta_{M_{n}}\left(  x+y\right)  \rightarrow1$ as $n\rightarrow\infty$
if $\left\vert x+y\right\vert >0$. Using this, as well as (\ref{S2E8}), we can now take
the limit $n\rightarrow\infty$ in (\ref{WeakTruncForm}) with $M$ replaced by $M_{n}$
(more details about these estimates can be found from the proof of Theorem 2.3 in \cite{FLNV}).
Since $K_{\varepsilon}(x,y)=\lim_{M\rightarrow\infty
}K_{\varepsilon,M}(x,y)$, and using that $K_{\varepsilon}$ is bounded for each
$\varepsilon>0$, we then obtain 
\begin{equation}
\frac{1}{2}\int_{{\mathbb{R}_{*}^{d}}}\int_{{\mathbb{R}_{*}^{d}}%
}K_{\varepsilon}\left(  x,y\right)  \left[  \varphi\left(  x+y\right)
-\varphi\left(  x\right)  -\varphi\left(  y\right)  \right]  f_{\varepsilon
}\left(  dx\right)  f_{\varepsilon}\left(  dy\right)  +\int_{{{\mathbb{R}}%
_{*}^{d}}}\varphi\left(  x\right)  \eta\left(  dx\right)  =0\label{S2E9}%
\end{equation}
for any test function $\varphi\in C_{c}\left(  {\mathbb{R}_{*}^{d}}\right)
.$ We now argue again as in the derivation of (\ref{FluxTruncMulti}), using
the test function $\varphi(x)=\left\vert x\right\vert \chi_{\delta}\left(
\left\vert x\right\vert \right)  ,$ or equivalently $\psi\left(
r,\theta\right)  =r\chi_{\delta}\left(  r\right)  .$ We define $F_{\varepsilon
}\left(  r,\theta\right)  =f_{\varepsilon}\left(  x\right)$ and $K_{\varepsilon}\left(  x,y\right)
=G_{\varepsilon}\left(  r,\rho;\theta,\sigma\right).$  Then, taking the limit $\delta\rightarrow0$ we arrive at
\begin{align*}
&  \frac{1}{d}\int_{\left(  0,z\right]  }r^{d}dr\int_{\left(  z-r,\infty\right)  }%
\rho^{d-1}d\rho\int_{\Delta^{d-1}} d\tau\left(  \theta\right)  \int_{\Delta^{d-1}} d\tau\left(  \sigma\right)  G_{\varepsilon}\left(  r,\rho;\theta,\sigma\right)  F_{\varepsilon}\left(  r,\theta\right)  F_{\varepsilon}\left(  \rho,\sigma\right)
\nonumber \\
&  = \int_{\left\{ 0< \left\vert x\right\vert \leq z\right\}  }\left\vert
x\right\vert \eta\left(  dx\right) \,, \qquad \text{for any }z>0\,.
\end{align*}

Using (\ref{eq:B1bound}) and
(\ref{eq:1levTrunc}) we have $K_{\varepsilon}(x,y)\geq\varepsilon+C_{0}%
\min\{z^{\gamma},\frac{1}{\varepsilon}\}$ for $\left\vert x\right\vert
,\left\vert y\right\vert \in\left[  \frac{z}{2},z\right]  $ with $C_{0}>0.$
Using then that $[2z/3,z]^{2}\subset\left\{  \left(  r,\rho\right)
\in\mathbb{R}_{+}^{2}:0<r\leq z,\ z-x<y<\infty\right\}  $ we obtain%
\[
\left(  \varepsilon+\min\{z^{\gamma},\frac{1}{\varepsilon}\}\right)  z\left(
\int_{[2z/3,z]}r^{d-1}dr\int_{\Delta^{d-1}}F_{\varepsilon}\left(
r,\theta\right)  d\tau\left(  \theta\right)  \right)  ^{2}\leq c \int_{\R_{*}^d} \vert x\vert \eta(dx)\ \text{ for all
}z>0 \,,
\]
where $c$ is independent of $\varepsilon$ and $\eta$. Thus%
\begin{align}
\frac{1}{z}\int_{[2z/3,z]}r^{d-1}dr\int_{\Delta^{d-1}}F_{\varepsilon}\left(
r,\theta\right)  d\tau\left(  \theta\right)  &\leq\frac{\tilde{C}}{z^{\frac
{3}{2}}\left(  \varepsilon+\min\{z^{\gamma},\frac{1}{\varepsilon}\}\right)
^{\frac{1}{2}}}\left( \int_{\R_{*}^d} \vert x\vert \eta(dx) \right)^{\frac 1 2}\nonumber \\& \leq\frac{\tilde{C}}{z^{\frac{3}{2}}\left(  \min\{z^{\gamma
},\frac{1}{\varepsilon}\}\right)  ^{\frac{1}{2}}} \left( \int_{\R_{*}^d} \vert x\vert \eta(dx)\right)^{\frac 1 2} \label{A1N}%
\end{align}
with $\tilde{C}$ independent of $\varepsilon$ and $\eta$, for $z>0$.  If $z\ge 1$, we have $ \min\{z^{\gamma
},\frac{1}{\varepsilon}\}\ge z^{\gamma_-}$ where $\gamma_-:= \min(0,\gamma)\le 0$.  Thus we can conclude 
that
\begin{align}
\frac{1}{z}\int_{[2z/3,z]}r^{d-1}dr\int_{\Delta^{d-1}}F_{\varepsilon}\left(
r,\theta\right)  d\tau\left(  \theta\right) 
 \leq\frac{\tilde{C}}{z^{\frac{3}{2}+\gamma_-}} \left( \int_{\R_{*}^d} \vert x\vert \eta(dx)\right)^{\frac 1 2}
 \,,\quad \text{for all }z\ge 1\,. \label{A1Nbis}%
\end{align}
This
implies the existence of a subsequence 
$\left\{  \varepsilon_{n}\right\}_{n\in\mathbb{N}}$ 
with $\lim_{n\rightarrow\infty}\varepsilon_{n}=0$ 
such that we have 
$|x|^{\gamma_-}f_{\varepsilon_{n}}(d x)\rightarrow |x|^{\gamma_-}f(dx)$ in the weak measure topology of $\mathcal{M}%
_{+,b}\left(  \mathbb{R}_{*}^{d}\right)$.  Clearly, then 
$f\in \mathcal{M}_{+}\left(  \mathbb{R}_{*}^{d}\right)$ and also
$f\left(
\left\{  x\in{\mathbb{R}_{*}^{d}}:|x|<1\right\}  \right)  =0$.

It only remains to check that we can
take the limit $n\rightarrow\infty$ in (\ref{S2E9}) with $\varepsilon
=\varepsilon_{n}$ for any test function $\varphi\in C_{c}\left(
\mathbb{R}_{*}^{d}\right)  .$ We can readily take the limit $n\rightarrow
\infty$, using (\ref{A1N}), in the term $\int_{{\mathbb{R}_{*}^{d}}}\int_{{\mathbb{R}_{*}^{d}%
}}K_{\varepsilon_{n}}\left(  x,y\right)  \varphi\left(  x+y\right)
f_{\varepsilon_{n}}\left(  dx\right)  f_{\varepsilon_{n}}\left(  dy\right)  $
because $\varphi$ is compactly supported and hence the integration
may be restricted to a compact subset of ${\mathbb{R}_{*}^{d}\times{\mathbb{R}}_{*}^{d}}$. It only remains
to examine the terms of (\ref{S2E9}) which contain the functions
$\varphi\left(  x\right)  ,\ \varphi\left(  y\right).$ These contributions are
identical, as it can be seen exchanging the variables $x$ and $y.$ Therefore, it
is enough to consider only one of them, say $\int_{{\mathbb{R}_{*}^{d}}}%
\int_{{\mathbb{R}_{*}^{d}}}K_{\varepsilon_{n}}\left(  x,y\right)
\varphi\left(  x\right)  f_{\varepsilon_{n}}\left(  dx\right)  f_{\varepsilon
_{n}}\left(  dy\right)  .$ Since $\varphi$ is compactly supported we only need
to obtain uniform boundedness of $\int_{{\mathbb{R}_{*}^{d}}}K_{\varepsilon
_{n}}\left(  x,y\right)  f_{\varepsilon_{n}}\left(  dy\right)  $ for $x$
bounded and for $|y|$ large. Due to (\ref{eq:1levTrunc}), (\ref{A1N}), as well as Lemma
\ref{lem:bound}, it is enough to estimate%
\[
\int_{1}^{\infty}\frac{\min\left\{  \rho^{\gamma},\frac{1}{\varepsilon
}\right\}  +\varepsilon}{\rho^{\frac{3}{2}}\left(  \varepsilon+\min
\{\rho^{\gamma},\frac{1}{\varepsilon}\}\right)  ^{\frac{1}{2}}}d\rho.
\]

It is readily seen that this integral can be bounded (up to a multiplicative
constant) by the sum%
\begin{equation}
\int_{1}^{\infty}\frac{\min\left\{  \rho^{\gamma},\frac{1}{\varepsilon
}\right\}  }{\rho^{\frac{3}{2}}\left(  \min\{\rho^{\gamma},\frac
{1}{\varepsilon}\}\right)  ^{\frac{1}{2}}}d\rho+\int_{1}^{\infty}%
\frac{\varepsilon}{\rho^{\frac{3}{2}}\left(  \varepsilon\right)  ^{\frac{1}
{2}}}d\rho \ .\label{S3E1}%
\end{equation}
The second integral in (\ref{S3E1}) is given by $2\left(  \varepsilon\right)
^{\frac{1}{2}}$ and thus it tends to zero as $\varepsilon\rightarrow0.$ On the
other hand, the first integral in (\ref{S3E1}) can be bounded as%
\[
\int_{1}^{\infty}\frac{\left(  \min\{\rho^{\gamma},\frac{1}{\varepsilon
}\}\right)  ^{\frac{1}{2}}d\rho}{\rho^{\frac{3}{2}}}\leq\int_{1}^{\infty}%
\frac{d\rho}{\rho^{\frac{3-\gamma}{2}}}<\infty 
\]
since $\gamma<1.$ This gives the desired uniform estimate. We can then take the limit $n\rightarrow\infty$ in
(\ref{S2E9}). 

It remains to check that also $\int_{\R^d_*} |x|^{\gamma} f(dx)
<\infty$ which is obvious from the construction, if $\gamma\le 0$.  If $0<\gamma<1$,
this follows by first taking $\vep\to 0$ in (\ref{A1N}) and then using Lemma \ref{lem:bound}.
Therefore, $f$ is a solution to (\ref{eq:contStat}) in the sense
of Definition \ref{StatInjDef}. This concludes the proof of Theorem
\ref{ThmExistCont}.
\end{proofof}

\subsection{Discrete coagulation equation}

We now prove Theorem \ref{ThmExistDisc}. Given that the proof is very similar
to the one for the continuous case (cf. Subsection \ref{ExistContCase}) we just sketch the main ideas. 

\bigskip

\begin{proofof}[Proof of Theorem \ref{ThmExistDisc}]
Due to item $(ii)$ of Lemma \ref{EquivDefin} it is enough to prove the result for
$p=0,\ \gamma<1.$ By assumption the kernel $K$ can be written as in
(\ref{KernType}) with $\Phi$ satisfying (\ref{eq:B1bound}) for all
$x,y\in\mathbb{N}_{*}^{d}  .$ We truncate the
kernel $K$ as in the proof of Theorem \ref{ThmExistCont} (cf.
(\ref{eq:1levTrunc}), (\ref{omegcutof}) and (\ref{eq:2levTrunc})) with
$\zeta_{M}$ chosen as explained in the paragraph after (\ref{eq:2levTrunc}). 
Then $\zeta_{M}$ satisfies 
Assumption \ref{Assumptions}. 

We then consider the following
time dependent truncated problem, where $M>2L$,%
\begin{equation}
\partial_{t}n_{\alpha}=\frac{\zeta_{M}\left(  \alpha\right)  }{2}\sum
_{\beta<\alpha}K_{\varepsilon,M}\left(  \alpha-\beta,\beta\right)
n_{\alpha-\beta}n_{\beta}-n_{\alpha}\sum_{\beta>0}K_{\varepsilon,M}\left(
\alpha,\beta\right)  n_{\beta}+ s_{\alpha}, \; \; \alpha \in\mathbb{N}_{*}^{d}.
 \label{DiscTrunc}%
\end{equation}
We can construct solutions of (\ref{DiscTrunc}) satisfying $n_{\alpha}=0$ for
$\left\vert \alpha\right\vert \geq 2M$ for any nonnegative initial distribution
$n_{0,\alpha}$ satisfying the same property. Local existence follows from
classical ODE theory. Global existence follows from the estimate%
\begin{equation}
\partial_{t}\left(  \sum_{\alpha}n_{\alpha}\right)  \leq-\frac{\varepsilon}%
{2}\left(  \sum_{\alpha}n_{\alpha}\right)  ^{2}+\sum_{\alpha}s_{\alpha}
\label{InvDiscTrunc}%
\end{equation}
that implies 
$$\sum_{\alpha}n_{\alpha}\leq\sum_{\alpha}n_{0,\alpha}%
+t\sum_{\alpha}s_{\alpha}.$$  
Due to (\ref{InvDiscTrunc}) there exists an invariant convex set defined by means of 
$$\left\{  n_{\alpha}%
:\sum_{\alpha}n_{\alpha}\leq\sqrt{\frac{2\sum_{\alpha}s_{\alpha}}{\varepsilon
}},\ n_{\alpha}\geq0\right\}.$$ Therefore, the existence of a stationary solution is
a consequence of Schauder's Fixed Point Theorem, arguing as in the proof of
Proposition \ref{thm:existence_truncated}. This solution will be denoted as
$\left\{  n_{\alpha}^{\varepsilon,M}\right\}  _{\alpha\in\mathbb{N}_{0}%
^{d}\setminus\left\{  O\right\}  }.$ 

We can now take the limit $M\rightarrow
\infty$ and then $\varepsilon\rightarrow0$ in order to obtain a stationary
injection solution to (\ref{S1E5}). To this end, we derive uniform
estimates for the sequence $\left\{  n_{\alpha}^{\varepsilon,M}\right\}
_{\alpha\in\mathbb{N}_{*}^{d}  }.$ 
More precisely, we have already proved 
the estimate $\sum_{\alpha}n_{\alpha}^{\varepsilon,M}\leq\sqrt{\frac
{2\sum_{\alpha}s_{\alpha}}{\varepsilon}}.$ Since the right-hand side is
independent of $M,$ there exists a sequence $\left\{  M_{n}\right\}
_{n\in\mathbb{N}}$ such that $M_{n}\rightarrow\infty$ as $n\rightarrow\infty$
and $n_{\alpha}^{\varepsilon,M_{n}}\rightarrow n_{\alpha}^{\varepsilon}$ as
$n\rightarrow\infty,$ where the sequence $\left\{  n_{\alpha}^{\varepsilon
}\right\}  _{\alpha\in\mathbb{N}_{*}^{d}  }$ solves%
\begin{equation}
\frac{1}{2}\sum_{\beta<\alpha}K_{\varepsilon}\left(  \alpha-\beta
,\beta\right)  n_{\alpha-\beta}^{\varepsilon}n_{\beta}^{\varepsilon}%
-n_{\alpha}^{\varepsilon}\sum_{\beta>0}K_{\varepsilon}\left(  \alpha
,\beta\right)  n_{\beta}^{\varepsilon}+ s_{\alpha }=0\ \ ,\ \ \alpha\in\mathbb{N}_{*}^{d}.
\label{TruncDisc}%
\end{equation}

In order to estimate the sequence $\left\{  n_{\alpha}^{\varepsilon}\right\}
_{\alpha\in\mathbb{N}_{*}^{d}  }$ we use the fact
that the measure $f_{\varepsilon}=\sum_{\alpha}n_{\alpha}^{\varepsilon}%
\delta\left(  \cdot-\alpha\right)  $ solves a stationary continuous equation.
More precisely, $f_{\varepsilon}$ satisfies (\ref{S2E9}) for any test function
$\varphi\in C_{c}\left(  {\mathbb{R}_{*}^{d}}\right)  ,$ where
$K_{\varepsilon}$ is any continuous extension of the discrete kernel. We can
then derive, arguing as in the proof of Theorem \ref{ThmExistCont} that
$f_{\varepsilon}$ satisfies the estimate (\ref{A1N}) with $f_{\varepsilon
}\left(  x\right)  =F_{\varepsilon}\left(  r,\theta\right)  .$ We can then
show that there exists a sequence $\left\{  \varepsilon_{n}\right\}
_{n\in\mathbb{N}}$ with $\lim_{n\rightarrow\infty}\varepsilon_{n}=0$ such that
$n_{\alpha}^{\varepsilon_{n}}\rightarrow n_{\alpha}$ as $n\rightarrow\infty$
for each $\alpha\in\mathbb{N}_{*}^{d}  .$ Moreover,
using (\ref{A1N}) we can also pass to the limit in the weak form of
(\ref{TruncDisc}) to show that $\left\{  n_{\alpha}\right\}  _{\alpha
\in\mathbb{N}_{*}^{d}  }$ is a stationary injection
solution to (\ref{S1E5}). Hence the Theorem follows.
\end{proofof}

\section{Nonexistence results}\label{sec:9}

In this Section we prove the non-existence of stationary injection solutions for the continuous and discrete 
model as well as the non-existence of constant flux solutions.

We first prove Theorem \ref{NonexistCont}.  
Due to Lemma \ref{EquivDefin}, item $(i)$, it is enough to prove the result for
$\gamma\geq1,\ p=0.$ We first recall the following auxiliary Lemma that is
a particular case of Lemma 4.1 in \cite{FLNV} with $a\geq1,\ b=0$.

\begin{lemma}
\label{lem:F+estimate} Let $a\geq1$ be a constant. Let
$W:\mathbb{R}_{*}\rightarrow{\mathbb{R}}$ be a right-continuous non-increasing
function satisfying $W(R)\geq0$, for all $R>0$. Assume that $h\in
\mathcal{M}_{+}(\mathbb{R}_{*})$ satisfies $h([1,\infty))>0$ and
\begin{equation}
\int_{\lbrack1,\infty)}x^{a}h(dx)<\infty\,. \label{eq:momentBound1}
\end{equation}
Suppose that there exists $\delta$ such that $0<\delta<1$ and the following
inequality holds
\begin{equation}
-\int_{\left[  1,\delta R\right]  }\left[  W\left(  R-y\right)  -W\left(
R\right)  \right]  h\left(  dy\right)  \leq -\frac{C}{R^{a+1}}\,,\quad\text{for
}R\geq R_{0}, \label{S4E9_1}%
\end{equation}
for some $R_{0}>1/\delta$ and $C>0$.

Then there are two constants $R_{0}^{\prime}\geq R_{0}$ and $B>0$ which depend only on $a$,
$h$, $\delta$, $R_{0}$, and $C$, such that
\begin{equation}
W\left(  R\right)  \geq\frac{B}{R^{a}}\,,\quad\text{for \ }R\geq R_{0}%
^{\prime}, \label{S5E3_1}%
\end{equation}

\end{lemma}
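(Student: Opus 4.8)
\textbf{Proof plan for Lemma \ref{lem:F+estimate}.}

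The plan is to iterate the hypothesis \eqref{S4E9_1} to propagate the pointwise lower bound from a sequence of geometrically increasing scales, using the monotonicity of $W$ to control the intermediate values. First I would fix a reference scale: since $h([1,\infty))>0$, I can choose $\rho\in[1,\infty)$ and $m>0$ such that $h([1,\rho])\ge m>0$, after possibly enlarging $\rho$ so that $\rho\ge 1/\delta$; replacing $\delta$ by a smaller number if necessary, we may also assume $\delta R\ge \rho$ whenever $R\ge R_0$, so the integration domain $[1,\delta R]$ in \eqref{S4E9_1} always contains $[1,\rho]$. The key elementary observation is that because $W$ is non-increasing and non-negative, for $y\in[1,\rho]$ and $R$ large we have $W(R-y)-W(R)\ge 0$, and moreover $W(R-\rho)-W(R)\ge W(R-y)-W(R)$ is the wrong direction — instead I would bound $-[W(R-y)-W(R)]\le -[W(R-\rho)-W(R)]$ is also wrong. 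The correct move: from \eqref{S4E9_1},
\[
\frac{C}{R^{a+1}}\le \int_{[1,\delta R]}[W(R-y)-W(R)]\,h(dy)\le [W(R-\rho)-W(R)]\,h([1,\delta R]) + (\text{contribution from }y>\rho),
\]
so I want a clean one-step estimate. Since $W(R-y)\le W(R-\rho)$ for $y\le\rho$ and $W(R-y)\ge W(R)$ for all $y\le\delta R< R$, splitting the integral at $\rho$ and using $h\in\mathcal M_+$ together with the moment bound \eqref{eq:momentBound1} to control the tail $\int_{(\rho,\delta R]}[W(R-y)-W(R)]h(dy)$ — which is bounded by $W(\rho')\int_{(\rho,\infty)}h(dy)$ for a fixed $\rho'$, hence by a constant — gives
\[
W(R-\rho)-W(R)\ \ge\ \frac{1}{h([1,\delta R])}\Bigl(\frac{C}{R^{a+1}}-\text{const}\Bigr),
\]
which is not yet of the right form because the constant may dominate. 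This tells me the tail contribution must instead be absorbed differently.

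So the main obstacle, as I see it, is handling the part of the integral with $y>\rho$: that contribution is a priori only controlled by the total mass and could swamp the $R^{-(a+1)}$ right-hand side. The resolution should be that the statement of Lemma 4.1 in \cite{FLNV} (of which this is the special case $a\ge1$, $b=0$) is proved by a telescoping/summation argument rather than a single step: one sums \eqref{S4E9_1} against a suitable weight over $R$ in a dyadic range $[R/2,R]$ (or over $R_k=2^k R_0$), uses Fubini to convert $\int_{[1,\delta R]}[W(R-y)-W(R)]h(dy)\,dR$ into an expression involving $\int W$, and exploits that $\int_{[1,\infty)}x^a h(dx)<\infty$ forces $W$ to not decay faster than $R^{-a}$ on average; monotonicity of $W$ then upgrades the average bound to the pointwise bound \eqref{S5E3_1}. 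Concretely, I would integrate \eqref{S4E9_1} in $R$ over $[\bar R, 2\bar R]$, apply Fubini to the left side to get $\int_{[1,2\delta\bar R]}\bigl(\int (W(R-y)-W(R))\,dR\bigr)h(dy)$, bound the inner $R$-integral above by $y\,W(\bar R - 2\delta \bar R)\le y\, W((1-2\delta)\bar R)$ using monotonicity, obtaining
\[
W((1-2\delta)\bar R)\cdot\Bigl(\int_{[1,2\delta\bar R]} y\,h(dy)\Bigr)\ \ge\ \int_{\bar R}^{2\bar R}\frac{C}{R^{a+1}}\,dR\ \ge\ \frac{C'}{\bar R^{a}},
\]
and then use the moment bound to see $\int_{[1,\infty)}y\,h(dy)\le \int_{[1,\infty)}y^a h(dy)<\infty$ (since $a\ge1$), giving $W((1-2\delta)\bar R)\ge B'/\bar R^a$. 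Renaming $R=(1-2\delta)\bar R$ (and adjusting the constant $B$ and the threshold $R_0'$ accordingly, with the requirement $1-2\delta>0$, which I can arrange by shrinking $\delta$) yields \eqref{S5E3_1}.

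Finally I would collect the book-keeping: the constant $B$ depends only on $C$ (through $C'$), on $\int_{[1,\infty)} y\,h(dy)$ hence on $h$ and $a$, and on $\delta$; the threshold $R_0'$ depends on $R_0$, $\delta$, and the requirement that $(1-2\delta)\bar R$ exceeds $R_0$ and that $2\delta\bar R\ge 1$. Since all of these are among the allowed dependencies, the proof is complete. The only genuinely delicate points are (a) ensuring $1-2\delta>0$, which may require replacing the given $\delta$ by $\min(\delta,1/4)$ — legitimate since \eqref{S4E9_1} with a smaller $\delta$ follows from the original by positivity of the integrand $W(R-y)-W(R)\ge0$ for $y<R$ — and (b) justifying the Fubini step, which is immediate as everything is non-negative. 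I would present this by first reducing to $\delta\le 1/4$, then doing the one integrated inequality, then reading off the constants.
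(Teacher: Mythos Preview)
Your integrated-in-$R$ argument is sound, but step (a)---reducing to $\delta\le 1/4$---is justified incorrectly. Since the integrand $W(R-y)-W(R)$ is non-negative on $[1,\delta R]$, shrinking the integration domain makes the left side of \eqref{S4E9_1} (written as $\int\ge C/R^{a+1}$) \emph{smaller}, so the lower bound does not pass to a smaller $\delta$; it would pass to a larger one. The fix is immediate and makes the reduction unnecessary: integrate over $R\in[\bar R,\lambda\bar R]$ with any fixed $\lambda\in(1,1/\delta)$ instead of $\lambda=2$. Then $y\le\lambda\delta\bar R$ and $R-y\ge(1-\lambda\delta)\bar R>0$ throughout, your telescoped inner $R$-integral is bounded by $y\,W((1-\lambda\delta)\bar R)$, the right-hand side is still $\ge C''/\bar R^{a}$ for some $C''>0$, and the remainder of your computation goes through verbatim with $1-\lambda\delta$ in place of $1-2\delta$.

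With that correction your proof is complete, and it takes a genuinely different route from the one the paper indicates. The paper does not reprove the lemma but records that the argument in \cite{FLNV} proceeds by a comparison principle: one constructs an explicit subsolution of the functional inequality \eqref{S4E9_1} (of the form $B R^{-a}$) and shows that $W$ must dominate it. Your approach avoids any subsolution construction by integrating once in $R$: the telescoping bound $\int[W(R-y)-W(R)]\,dR\le y\cdot W(\text{left endpoint})$ converts the convolution-type expression directly into a pointwise value of $W$, and the extra factor $y$ is absorbed by the first-moment bound $\int_{[1,\infty)} y\,h(dy)\le\int_{[1,\infty)} y^a\,h(dy)<\infty$, which is exactly where the hypothesis $a\ge 1$ enters. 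This is shorter and more elementary for the present special case; the comparison method in \cite{FLNV} is built to handle the full two-parameter family of inequalities treated there.
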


\bigskip

The proof of Lemma \ref{lem:F+estimate} relies on a comparison argument and on the construction of a suitable
subsolution for the problem \eqref{S4E9_1}.  \medskip

\begin{proofof}
[Proof of Theorem \ref{NonexistCont}] 
We start proving item $(i)$ which refers to the continuous case. To this end we follow the same strategy as in the proof of Theorem 2.4 in \cite{FLNV} for the one-component case. 
To get a contradiction, let us assume $f$ is a stationary injection solution.
Let $F$ be the corresponding measure in the simplex coordinates, as explained after Definition \ref{StatInjDef}.  In particular, then 
the support of $F$ 
lies in $[1,\infty)\times \Delta^{d-1}$ and $F$ satisfies
$
 \int_{\R_*\times \Delta^{d-1}} r^{d-1+\gamma} F(r,\theta)dr d\tau(\theta)
<\infty$.
We define $G$ as before from $K$.  Then, after rewriting (\ref{WeakForm1}) in the simplex coordinate system, we find that
\begin{align}
&  \frac{1}{2 d}\int_{\mathbb{R}_{*}}r^{d-1}dr\int_{\mathbb{R}_{*}}\rho
^{d-1}d\rho\int_{\Delta^{d-1}}d\tau\left(  \theta\right)  \int_{\Delta^{d-1}%
}d\tau\left(  \sigma\right)  G\left(  r,\rho;\theta,\sigma\right)  F\left(
r,\theta\right)  F\left(  \rho,\sigma\right) \times \nonumber\\
&  \times\left[  \psi\left(  r+\rho,\frac{r}{r+\rho}\theta+\frac{\rho}{r+\rho
}\sigma\right)  -\psi\left(  r,\theta\right)  -\psi\left(  \rho,\sigma\right)
\right]  +\int_{{\mathbb{R}_{*}^{d}}}\varphi\left(  x\right)  \eta\left(
dx\right) =0
\label{S8E6}%
\end{align}
for every $\psi \in C^{1}_c(\R_{*}\times \Delta^{d-1})$. 

We now choose test functions of the form $\psi\left(  r,\theta\right)  =r\chi_{R,\delta
}\left(  r\right)  $ to derive a formula for the fluxes. 
Let $R> \delta>0$.
We assume that
$\chi_{R,\delta}\in C^\infty_c(\R_*)$ is a ``bump function'', more precisely, it is monotone increasing on $(0,R]$ monotone decreasing on $[R,\infty)$. We also assume that  
$\chi_{R,\delta}\left(  s\right)  =1$ for $\delta\le s\leq R,\ \chi_{R,\delta}\left(
s\right)  =0$ for $s\geq R+\delta$.  
We then have%
\begin{align*}
 \psi\left(  r+\rho,\frac{r}{r+\rho}\theta+\frac{\rho}{r+\rho}\sigma\right)
&-\psi\left(  r,\theta\right)  -\psi\left(  \rho,\sigma\right) = \left(  r+\rho\right)  \chi_{R,\delta}\left(  r+\rho\right)
-r\chi_{R,\delta}\left(  r\right)  -\rho\chi_{R,\delta}\left(  \rho\right)  \\
&  =-r\left[  \chi_{R,\delta}\left(  r\right)  -\chi_{R,\delta}\left(
r+\rho\right)  \right]  -\rho\left[  \chi_{R,\delta}\left(  \rho\right)
-\chi_{R,\delta}\left(  r+\rho\right)  \right] \ .
\end{align*}
Plugging this identity in (\ref{S8E6}) and assuming that $R>L$ we obtain%
\begin{align}
&  |J| d=\int_{\mathbb{R}_{*}}r^{d-1}dr\int_{\mathbb{R}_{*}}\rho^{d-1}d\rho
\int_{\Delta^{d-1}}d\tau\left(  \theta\right)  \int_{\Delta^{d-1}}d\tau\left(
\sigma\right)  G\left(  r,\rho;\theta,\sigma\right)  F\left(  r,\theta\right)
F\left(  \rho,\sigma\right)  \nonumber \label{T1E1}\\
& \qquad \times r\left[  \chi_{R,\delta}\left(  r\right)  -\chi_{R,\delta}\left(
r+\rho\right)  \right] \,,
\end{align}
where $J=\int_{{\mathbb{R}_{*}^{d}}}x\eta\left(  dx\right)  \in
\mathbb{R}_{*}^{d}.$ Taking the limit $\delta\rightarrow0$ we may then conclude that for all $R>L$
\begin{equation}
\int_{\left(  0,R\right]  }r^{d-1}dr\int_{\left(  R-r,\infty\right)  }%
\rho^{d-1}d\rho\int_{\Delta^{d-1}}d\tau\left(  \theta\right)  \int
_{\Delta^{d-1}}d\tau\left(  \sigma\right)  G\left(  r,\rho,\theta
,\sigma\right)  F\left(  r,\theta\right)  F\left(  \rho,\sigma\right)
r=\left\vert J\right\vert d .\label{S8E7}%
\end{equation}

By the earlier mentioned known properties of $F$,
\begin{equation}
\int_{\left[  1,\infty\right)  }\rho^{d-1+\gamma}d\rho\int_{\Delta^{d-1}}%
d\tau\left(  \sigma\right)  F\left(  \rho,\sigma\right)  <\infty\,.\label{S8E8}%
\end{equation}
We estimate first the contribution to the integral in (\ref{S8E7}) due to the
region $\left\{  \rho\geq\delta r\right\}  $ where $0<\delta< \frac{1}{2}$. Using the upper estimate in
\eqref{Ineq2} (see also \eqref{S2E2}) we obtain%
\begin{align}
&  \int_{\left\{  \left(  r,\rho\right)  :r\in\left( 0,R\right]
,\ r+\rho> R,\ \rho\geq\delta r\right\}  }r^{d-1}\rho^{d-1}drd\rho
\int_{\Delta^{d-1}}d\tau\left(  \theta\right)  \int_{\Delta^{d-1}}d\tau\left(
\sigma\right)  G\left(  r,\rho;\theta,\sigma\right)  F\left(  r,\theta\right)
F\left(  \rho,\sigma\right)  r\nonumber\\
&  \leq C_{\delta}\int_{\left[  1,R\right]  }r^{d-1+1}dr\int_{\left[
\frac{\delta R}{1+\delta},\infty\right)  }\rho^{d-1}d\rho\int_{\Delta^{d-1}%
}d\tau\left(  \theta\right)  \int_{\Delta^{d-1}}d\tau\left(  \sigma\right)
\rho^{\gamma}F\left(  r,\theta\right)  F\left(  \rho,\sigma\right)
\nonumber\\
&  \leq C_{\delta}\left(  \int_{\left[  1,R\right]  }r^{d-1+\gamma}%
dr\int_{\Delta^{d-1}}F\left(  r,\theta\right)  d\tau\left(  \theta\right)
\right)  \int_{\left[  \frac{\delta R}{1+\delta},\infty\right)  }%
\rho^{d-1+\gamma}d\rho\int_{\Delta^{d-1}}F\left(  \rho,\sigma\right)
d\tau\left(  \sigma\right)  \label{S8E9}%
\end{align}
where $\gamma\geq1$, $p=0$. It then follows from \eqref{S8E8} that the right-hand side
of (\ref{S8E9}) tends to zero as $R\rightarrow\infty.$ 

Therefore, we can now conclude from  (\ref{S8E7}) that for every $\delta\in (0,1)$ there is $R_\delta>L,\delta^{-1}$ such that, if $R\ge R_\delta$, then 
\begin{align*}
& \int_{\left\{  \left(  r,\rho\right)  :\, r\in\left( 0,R\right],\, r+\rho>
R,\, \rho<\delta r\right\}  } r^{d}\rho^{d-1}drd\rho
\int_{\Delta^{d-1}}\!
d\tau\left(  \theta\right)  \int_{\Delta^{d-1}}\! d\tau\left(  \sigma\right)
G\left(  r,\rho;\theta,\sigma\right)  F\left(  r,\theta\right)  F\left(
\rho,\sigma\right)
\\ & \quad 
\geq\frac{\left\vert J\right\vert d}{2}.  
\end{align*}
Using again the upper estimate in
\eqref{Ineq2} (or \eqref{S2E2}) as well as
the fact that in the domain of integration $r$ is close to $R$ we obtain%
\[
\int_{\left[  1,\delta R\right]  }d\rho\int_{\left(  R-\rho,R\right]  }%
dr\int_{\Delta^{d-1}}F\left(  \rho,\sigma\right)  \rho^{d-1}d\tau\left(
\sigma\right)  \int_{\Delta^{d-1}}r^{d-1}F\left(  r,\theta\right)
d\tau\left(  \theta\right)  \geq\frac{C_{1}}{R^{\gamma+1}}%
\]
where $C_{1}>0$ depends on $J$ but is independent of $R$ and $\delta$.
We then consider the measure
\[
h\left(  r\right)  =r^{d-1}\int_{\Delta^{d-1}}F\left(  r,\theta\right)
d\tau\left(  \theta\right)\,,
\]
which belongs to $\mathcal{M}_+(\R_*)$, by Fubini's theorem. We can conclude that
for all $R\ge R_\delta$
\[
\int_{\left[  1,\delta R\right]  }h\left(  \rho\right)  d\rho\left[
\int_{\left(  R-\rho,R\right]  }h\left(  r\right)  dr\right]  \geq\frac{C_{1}}{R^{\gamma+1}}. 
\]

The support of $h$ lies in $[1,\infty)$ and $h\ne 0$. 
Since $
 \int_{\R_*\times \Delta^{d-1}} r^{d-1+\gamma} F(r,\theta)dr d\tau(\theta)
<\infty$, we have $ \int_{\R_*} r^{\gamma} h(r)dr <\infty$.
Here $\gamma\ge 1$, and we may conclude 
that also $ \int_{\R_*} h(r)dr <\infty$.  Therefore, we can define a right-continuous, non-negative and non-increasing function  $W$ by
\[
W\left(  R\right)  =\int_{\left( R,\infty\right)  }h\left(  r\right)  dr\,,
\qquad R>0\,,
\]
and rewrite the earlier bound as 
\[
\int_{\left[  1,\delta R\right]  }h\left(  \rho\right)  \left[  W\left(  R-\rho\right) 
-W\left(  R\right)  \right]  d\rho\geq\frac{C_{1}
}{R^{\gamma+1}}%
\]
for $R\ge R_\delta$. Applying Lemma \ref{lem:F+estimate}, we obtain%
\[
W\left(  R\right)  \geq\frac{B}{R^{\gamma}}%
\]
for all $R$ large enough and with a constant $B>0$. 
Then, for any $R$ sufficiently large we have
$$\int_{[R,\infty)} \rho^\gamma h(\rho)d\rho \geq R^{\gamma} W(R)\geq B>0,$$
but this contradicts (\ref{S8E8}) and the result follows. This concludes the proof of item $(i)$. 
\medskip

We now prove item $(ii)$ concerning the non-existence of stationary injection solutions for the discrete model. 
We observe that the result is a direct corollary of  item $(i)$.  Namely, if $\left\{  n_{\alpha}\right\}  _{\alpha\in\mathbb{N}%
_{*}^{d} }$ solves (\ref{S1E5}), then 
$f\left(  \cdot\right)  =\sum_{\alpha
}n_{\alpha}\delta\left(  \cdot-\alpha\right)  $ and $\eta\left(  \cdot\right)
=\sum_{\alpha}s_{\alpha}\delta\left(  \cdot-\alpha\right) $ solves
(\ref{eq:contStat}) which leads to a contradiction.
\end{proofof}

\bigskip

Finally we show the non-existence of nontrivial constant flux solutions stated in Theorem  \ref{NonexFluxSol}. \medskip

\begin{proofof}[Proof of Theorem \ref{NonexFluxSol}]
The proof is similar to that of Theorem
\ref{NonexistCont}, except that here we do not even  need to assume that $f$ solves 
(\ref{WeakForm1}).

We begin with the assumption that $f$ is a nontrivial constant flux solution, and hence $A$ defined by (\ref{S1E8}) satisfies $A_j(R)=(J_0)_j$, for all $R>0$.  Summing over $j$ in 
(\ref{S1E8}), we find that for all $R>0$
\begin{equation}
|J_0| =  \frac{1}{d}
\int_{(0,R]\times \Delta^{d-1}}dr d\tau\left(  \theta\right)\int_{(R-r,\infty)\times \Delta^{d-1}} d\rho d\tau\left(  \sigma\right)
r^{d}\rho^{d-1}
F\left(  r,\theta\right)  F\left(  \rho,\sigma\right) G\left(
r,\rho;\theta,\sigma\right) \,.\label{S1E8sum}
\end{equation}
Since we assume that the measure $f$ is nontrivial, here $|J_0|>0$.
Therefore, the previous equality in (\ref{S8E7}) is satisfied (the value of $L>0$ can now be chosen arbitrarily), and 
we can follow the remaining steps in the proof of Theorem
\ref{NonexistCont} to obtain a contradiction using the moment bound (\ref{S1E4}) and the assumption $\gamma\ge 1$.
\end{proofof}

 \bigskip


\section{Upper and lower estimates for the solutions}\label{sec:7}

\bigskip

Until now, we have focused on the question of existence of stationary solutions in the three cases of interest.
We now consider those kernels which can have a stationary solution.  Although no uniqueness of these solutions is claimed here, we prove in this subsection that all such solutions satisfy certain powerlaw growth bounds.
We collect the statement for the continuous stationary injection solutions in Proposition \ref{UppEstCont} and the corresponding results for the discrete coagulation equation in Proposition \ref{UppEstDisc}.
These results generalize similar upper and lower estimates proven 
in \cite{FLNV} to the multicomponent case and the larger class of kernels.

\begin{proposition}
\label{UppEstCont}
Let us assume that $K$ is a continuous symmetric function satisfying
(\ref{Ineq2}), (\ref{Ineq3}) with $\gamma+2p<1.$ Assume also that 
$\eta\in\mathcal{M}_{+,b}\left(  \mathbb{R}_{*}^{d}\right)$
has its support in  the set $\defset{x\in \R^d_*}{1\le |x|\le L}$ for some $L>1$. We denote by  
$\vert J_{0}\vert $ the total mass injection rate, where $J_{0} =\int
_{\mathbb{R}_{*}^{d}} x\eta\left(  dx\right)\in \R^d_*$. 
Let $f\in\mathcal{M}
_{+}\left(  \mathbb{R}_{*}^{d}\right)$ be a stationary injection solution  to (\ref{eq:contStat}) in the sense
of Definition \ref{StatInjDef}.
There exist constants $C_{1}, C_{2}>0$ and $b\in\left(  0,1\right)  $
depending only on $\gamma,\ p,\ d$ and the constants $c_{1},c_{2}$ in
(\ref{Ineq2}) such that the following estimates hold with $\xi = \frac{L}{b}$ 
\begin{align}
\frac{1}{z}\int_{\frac{z}{2}\leq\left\vert x\right\vert \leq z}f\left(
dx\right)   &  \leq\frac{C_{1}\sqrt{\vert J_{0}\vert }}{z^{\frac{3+\gamma}{2}}%
}\ \ \ \text{for all }z>0\,,\label{S3E4}\\
\frac{1}{z}\int_{bz\leq\left\vert x\right\vert \leq z}f\left(  dx\right)   &
\geq\frac{C_{2}\sqrt{\vert J_{0}\vert }}{z^{\frac{3+\gamma}{2}}}\ \ \ \text{for all }%
z> \xi \, .\label{S3E5} 
\end{align}

Alternatively, let us assume  that $f$ is a nontrivial constant 
flux solution (cf.~Definition \ref{DefConstFlux}) 
with flux $J_0$. Then
there exist constants $C_{1}, C_{2}>0$ and $b\in\left(  0,1\right)  $
depending only on $\gamma,\ p,\ d$ and the constants $c_{1},c_{2}$ in
(\ref{Ineq2}) such that \eqref{S3E4} and \eqref{S3E5} hold with $\xi = 0$.
\end{proposition}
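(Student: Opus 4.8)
The plan is to reduce everything to a one–dimensional statement about the radial marginal of $f$, and then invoke the one–component estimates of \cite{FLNV}. First I would remove the parameter $p$: by Lemma~\ref{EquivDefin}(i),(iii) the measure $h(x)=|x|^{-p}f(x)$ is a stationary injection solution (resp.\ a constant flux solution) for $\tilde K(x,y)=|x|^p|y|^pK(x,y)$, which by \eqref{S2E2} satisfies $c_1(|x|+|y|)^{\gamma+2p}\le\tilde K\le c_2(|x|+|y|)^{\gamma+2p}$; this substitution leaves the support of the source and the flux vector $J_0$ unchanged (the weights $|x|^{\pm p}$ cancel in \eqref{S1E8}), and it multiplies $f$ on each annulus $\{z/2\le|x|\le z\}$ by a factor comparable to $z^{-p}$, so that an estimate for $h$ with exponent $(3+(\gamma+2p))/2$ turns into an estimate for $f$ with exponent $(3+\gamma)/2$. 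Thus it suffices to treat kernels with $c_1(|x|+|y|)^\gamma\le K\le c_2(|x|+|y|)^\gamma$ and $\gamma<1$, writing $\gamma$ for the reduced exponent below.

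Next I would pass to a scalar flux identity. Using the radial test functions $\varphi(x)=|x|\chi_{R,\delta}(|x|)$ in \eqref{WeakForm1} and letting $\delta\to0$ exactly as in the derivations of \eqref{FluxTruncMulti} and \eqref{S8E7}, one gets for every $z>0$
\[
\int_{(0,z]}r^{d}dr\int_{(z-r,\infty)}\rho^{d-1}d\rho\int_{\Delta^{d-1}}\!\!d\tau(\theta)\int_{\Delta^{d-1}}\!\!d\tau(\sigma)\,G(r,\rho;\theta,\sigma)F(r,\theta)F(\rho,\sigma)=d\,\Xi(z),
\]
where $\Xi(z)=\int_{\{|x|\le z\}}|x|\eta(dx)$ in the injection case (so $\Xi(z)=|J_0|$ for $z>L$ and $\Xi(z)\le|J_0|$ always) and $\Xi(z)\equiv|J_0|$ for all $z>0$ in the constant flux case, by \eqref{S1E8sum}. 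Let $\mu\in\mathcal{M}_+(\R_*)$ be the push-forward of $f$ under $x\mapsto|x|$, so $\int_{\{a\le|x|\le b\}}f(dx)=\mu([a,b])$ and $r^{d-1}\bigl(\int_{\Delta^{d-1}}F(r,\theta)\,d\tau(\theta)\bigr)\,dr=\sqrt d\,d\mu(r)$. Because $c_1(r+\rho)^\gamma\le G(r,\rho;\theta,\sigma)\le c_2(r+\rho)^\gamma$ uniformly in $\theta,\sigma$, the simplex integrals get squeezed, and with
\[
\mathcal{K}(z):=\int_{(0,z]}r\,d\mu(r)\int_{(z-r,\infty)}(r+\rho)^\gamma\,d\mu(\rho)
\]
the identity becomes $c_1\,\mathcal{K}(z)\le\Xi(z)\le c_2\,\mathcal{K}(z)$. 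In particular $\mathcal{K}(z)\le|J_0|/c_1$ for all $z>0$, while $\mathcal{K}(z)\ge|J_0|/c_2$ for $z>L$ (injection) and for all $z>0$ (constant flux); also $\int_{[1,\infty)}r^\gamma d\mu<\infty$ and $\int_{(0,1]}r\,d\mu<\infty$ by Definitions~\ref{StatInjDef}, \ref{DefConstFlux}. The problem has become one–dimensional, the simplex playing no further role.

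For \eqref{S3E4} I would restrict $\mathcal{K}(z)$ to $r,\rho\in(2z/3,z]$ (contained in the integration region, with $(r+\rho)^\gamma\ge c(\gamma)z^\gamma$ and $r\ge2z/3$): this gives $\mathcal{K}(z)\ge c\,z^{1+\gamma}\mu((2z/3,z])^2$, hence $\mu((2z/3,z])\le C\sqrt{|J_0|}\,z^{-(1+\gamma)/2}$ for all $z>0$; covering $(z/2,z]$ by $(z/2,3z/4]\cup(2z/3,z]$ and applying this at $z$ and $3z/4$ yields \eqref{S3E4} for all $z>0$ (so with $\xi=0$ for constant flux solutions).

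For \eqref{S3E5} I would use $\mathcal{K}(z)\ge|J_0|/c_2>0$ and split $\mathcal{K}(z)=\mathcal{K}_<(z)+\mathcal{K}_\ge(z)$ along $r\le bz$, $bz<r\le z$. On $\mathcal{K}_<(z)$ one has $\rho>(1-b)z>r$, so $(r+\rho)^\gamma\le2^{|\gamma|}\rho^\gamma$, and summing the $\tfrac12$-averages of \eqref{S3E4} by Lemma~\ref{lem:bound} gives $\int_{(0,bz]}r\,d\mu\le C\sqrt{|J_0|}(bz)^{(1-\gamma)/2}$ and $\int_{((1-b)z,\infty)}\rho^\gamma d\mu\le C\sqrt{|J_0|}\,z^{(\gamma-1)/2}$, so $\mathcal{K}_<(z)\le C_\ast|J_0|\,b^{(1-\gamma)/2}$ for $z>L/b$ (the contribution of $(0,1]$ to the first integral, present for constant flux solutions, is $\le C\sqrt{|J_0|}$ by the same mechanism and is absorbed since $z>L/b>1$). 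Fixing $b=b(\gamma,p,d,c_1,c_2)\in(0,\tfrac12)$ with $C_\ast b^{(1-\gamma)/2}\le\tfrac{1}{2c_2}$ forces $\mathcal{K}_\ge(z)\ge\tfrac12\mathcal{K}(z)$ for $z>\xi:=L/b$. Then, bounding $\mathcal{K}_\ge(z)$ from above via $r\le z$, $(r+\rho)^\gamma\le2^{|\gamma|}\rho^\gamma$ for $\rho>z$, $(r+\rho)^\gamma\le2^{|\gamma|}z^\gamma$ for $z-r<\rho\le z$, and $\int_z^\infty\rho^\gamma d\mu\le C\sqrt{|J_0|}z^{(\gamma-1)/2}$, one arrives at
\[
\mathcal{K}_\ge(z)\ \le\ C\sqrt{|J_0|}\,z^{(\gamma+1)/2}\,\mu((bz,z])\ +\ 2^{|\gamma|}z^{\gamma+1}\!\int_{(bz,z]}\!\mu((z-r,z])\,d\mu(r).
\]
The first term is of the desired form; the near-diagonal second term must be shown to be $\le\tfrac14\mathcal{K}(z)$, after splitting the $r$-range once more at $r=(1-\eta)z$ with $\eta=\eta(\gamma)$ small — and this is precisely the one–component estimate of \cite{FLNV} (which rests on a comparison/subsolution argument of the type of Lemma~\ref{lem:F+estimate}), which applies unchanged with $\mu$ playing the role of $f$. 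Combining, $\mu((bz,z])\ge c\sqrt{|J_0|}\,z^{-(1+\gamma)/2}$, i.e.\ \eqref{S3E5}; undoing the first-paragraph reduction restores the exponent $(3+\gamma)/2$, the dependence of the constants on $\gamma,p,d,c_1,c_2$, and the values $\xi=L/b$ (injection) and $\xi=0$ (constant flux). The main obstacle is exactly this near-diagonal step: \eqref{S3E4} only controls scale-$z$ averages of $\mu$, so a naive use of it on the short interval $(z-r,z]$ with $r\uparrow z$ is too lossy, and one must instead exploit the integrability of the $(r+\rho)^\gamma$-weighted mass near the diagonal, which is where the refined argument of \cite{FLNV} enters; everything else is bookkeeping, the only genuinely multicomponent input being the harmless squeezing of the simplex integrals, immediate from the isotropy of \eqref{Ineq2}, \eqref{Ineq3}.
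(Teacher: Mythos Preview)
Your reduction to $p=0$, the push-forward to the radial marginal $\mu$, and the proof of the upper bound \eqref{S3E4} are correct and coincide with the paper's argument. The gap is in the lower bound \eqref{S3E5}, at precisely the step you flag as the main obstacle.

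You defer control of the near-diagonal term $z^{\gamma+1}\int_{(bz,z]}\mu((z-r,z])\,d\mu(r)$ to \cite{FLNV}, claiming it ``rests on a comparison/subsolution argument of the type of Lemma~\ref{lem:F+estimate}''. This is a misattribution on two counts. First, Lemma~\ref{lem:F+estimate} is used only in the \emph{non-existence} proof (Theorem~\ref{NonexistCont}) and plays no role in deriving \eqref{S3E5}. Second, the actual one-component lower-bound argument in \cite{FLNV}---reproduced in this paper---does not bound such a term pointwise in $z$: it splits the flux by the ratio $\rho/r$ into cones $\Sigma_1,\Sigma_2,\Sigma_3$ and controls the dangerous piece $J_3(z,\delta)$ (corresponding to $\rho<\delta r$) only \emph{after averaging over $z\in[R,2R]$}, where Fubini converts the inner integral over the short window $[z-\rho,z]$ into an integral of length $\rho$ in the $z$-variable, yielding $\tfrac{1}{R}\int_{[R,2R]}J_3(z,\delta)\,dz\le C|J_0|\,\delta^{(1-\gamma)/2}$. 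Your near-diagonal remainder, after the further split at $r=(1-\eta)z$, contains exactly this $J_3$-type contribution, and it cannot be made $\le\tfrac14\mathcal K(z)$ at a fixed $z$ from \eqref{S3E4} alone: for $\gamma>-1$ the crude estimate $\mu((z-r,z])\le\mu((0,z])$ together with $\mu(((1-\eta)z,z])\le C\sqrt{|J_0|}\,z^{-(1+\gamma)/2}$ gives a bound of order $|J_0|\,z^{(1+\gamma)/2}$, which diverges, and no sharpening based on dyadic-scale information about $\mu$ can repair this since $\mu((z-r,z])$ for $r\uparrow z$ probes arbitrarily short intervals. The averaging in $z$ is the mechanism that closes the argument, not a comparison lemma; without it your decomposition does not yield \eqref{S3E5}.
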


\begin{remark}
We observe that $\vert J_{0}\vert $ is the total injection rate which means that it 
includes all of the possible monomer types.
\end{remark}

\begin{proof}
Due to Lemma \ref{EquivDefin} as well as the fact that the estimates
(\ref{S3E4}), (\ref{S3E5}) are invariant under the transformation $\left(
f\left(  x\right)  ,\gamma,p\right)  \rightarrow\left(  \frac{f\left(
x\right)  }{\left\vert x\right\vert ^{p}},\gamma+2p,0\right)$, it is
sufficient to prove the Proposition for $p=0$ and $\gamma<1$.

Suppose first that $f$ is an appropriate stationary injection solution, in particular, it satisfies
(\ref{WeakForm1}).
We define $F$ by means of (\ref{G2}) and $G$ by means of (\ref{G1}). Then,
arguing as in the derivation of (\ref{FluxTruncMulti}) in 
the proof of Theorem \ref{ThmExistCont} (i.e.~using the test
function $\varphi\left(x\right)  =|x|\chi_{\delta}\left(  |x|\right)  $ and
taking the limit $\delta\rightarrow0$), we obtain 
\begin{align}\label{FluxMultAllz}  
&   \frac{1}{d}\int_{\left(  0,z\right]  }r^{d}dr\int_{\left(  z-r,\infty\right)  }%
\rho^{d-1}d\rho
\int_{\Delta^{d-1}} d\tau\left(  \theta\right)  \int_{\Delta^{d-1}} d\tau\left(  \sigma\right)  G\left(  r,\rho;\theta,\sigma\right)  F\left(  r,\theta\right)  F\left(  \rho,\sigma\right)
\nonumber \\
& \quad = \int_{\left\{ 0< \left\vert x\right\vert \leq z\right\}  }\left\vert
x\right\vert \eta\left(  dx\right) \,, \qquad \text{for any }z>0\,.
\end{align}
The $(r,\rho)$-integration can also be rearranged using Fubini's theorem to occur over the set
\begin{equation}
\Omega_{z}:=\{(r,\rho)\in\mathbb{R}_{+}^{2}:0<r\leq z,\ \rho
>z-r\}\,, \qquad z>0 . \label{eq:Omegaz}%
\end{equation}
In particular, we find for $z$ larger than the support of the source that
\begin{align}\label{FluxMult}  
&   \frac{1}{d}\int_{\Omega_{z}}dr d\rho \, r^{d} \rho^{d-1}
\int_{\Delta^{d-1}} d\tau\left(  \theta\right)  \int_{\Delta^{d-1}} d\tau\left(  \sigma\right)  G\left(  r,\rho;\theta,\sigma\right)  F\left(  r,\theta\right)  F\left(  \rho,\sigma\right)
= |J_0| \,,\ z\ge L\,.
\end{align}

In the second case, if $f$ is a nontrivial constant 
flux solution as in Definition \ref{DefConstFlux},
by summing over $j$ in (\ref{S1E8}) we find that (\ref{FluxMult}) holds for any choice of $L>0$.
In particular, we may now conclude that, for all $z>0$ and for either of two cases of a solution $f$,
we have an upper bound
\begin{align}\label{FluxBoundMult} 
&   \frac{1}{d}\int_{\Omega_{z}}dr d\rho \, r^{d} \rho^{d-1}
\int_{\Delta^{d-1}} d\tau\left(  \theta\right)  \int_{\Delta^{d-1}} d\tau\left(  \sigma\right)
G\left(  r,\rho;\theta,\sigma\right)  F\left(  r,\theta\right)  F\left(  \rho,\sigma\right) 
\le |J_0| \,.
\end{align}
First, let us recall that, since $p=0$,
we have 
\begin{equation}
c_{1}\left(  r+\rho\right)  ^{\gamma}\leq G\left(  r,\rho,\theta
,\sigma\right)  \leq c_{2}\left(  r+\rho\right)  ^{\gamma}\,.\label{IneqG}%
\end{equation}
The integration goes over 
$\Omega_z$ which contains the set $[2z/3,z]^{2}$.  By positivity of the integrand and using the 
lower bound in (\ref{IneqG}), we thus obtain an estimate
\begin{align}
& \frac{2}{3 d} c_1 z^{\gamma+1} \left( 
\int_{[2z/3,z]} dr \, r^{d-1}
\int_{\Delta^{d-1}} d\tau\left(  \theta\right)  F(  r,\theta) \right)^2
\le |J_0| \,, \qquad \text{for any }z>0 \,.
\end{align}
Since $[z/2,z]=[z/2,3 z/4]\cup [2z/3,z]$, we find that
(\ref{S3E4}) holds with a choice of  $C_1>0$ which depends only on $c_1$, $\gamma$, and $p$.  This concludes the proof of the upper estimate, for both of the types of solutions $f$.

For the lower bound, we recall that (\ref{FluxMult}) holds for either of the types of solutions $f$, using an arbitrary $L>0$ if $f$ is a flux solution.
We now prove that the main contribution to the integral in (\ref{FluxMult}) is
due to the regions where $r$ and $\rho$ are comparable. To this end, for each
$\delta\in (0,1)$ we partition $(0,\infty)^{2}
=\Sigma_{1}(\delta)\cup\Sigma_{2}(\delta)\cup\Sigma_{3}(\delta)$ where%
\begin{align}
&\Sigma_{1}(\delta)=\{(r,\rho)\in (0,\infty)^{2}: \rho>r/\delta
\}\,,\nonumber \\& 
\Sigma_{2}(\delta)=\{(r,\rho)\in (0,\infty)^{2}:\delta
r\leq\rho\leq r/\delta\}\,,\nonumber \\& 
\Sigma_{3}(\delta)=\{(r,\rho)\in (0,\infty)^{2}:\rho<\delta r\}\,, \label{SigmaDelta}%
\end{align}
We define also the related functions%
\begin{equation}
J_{j}(z,\delta)= \frac{1}{d}\int_{\Omega_{z}\cap\Sigma_{j}(\delta)}drd\rho
\int_{\Delta^{d-1}} d\tau\left(  \theta\right)  \int_{\Delta^{d-1}} d\tau\left(  \sigma\right) r^{d}\rho^{d-1} 
G\left(  r,\rho;\theta,\sigma\right)  F\left(  r,\theta\right)  F\left(  \rho,\sigma\right)
\label{JiFunctions}%
\end{equation}
for $z>0$, $\delta\in (0,1)$ and $j=1,2,3$. 
We now claim that for each $\varepsilon>0$ there exists
$\delta_{\vep}$ depending only on $\varepsilon$, as well as on $\gamma,\ p,\ d$
and the constants $c_{1},c_{2}$, such that, if $\delta<\delta_\vep$ we have%
\begin{equation}
\sup_{z>0}J_{1}(z,\delta)\leq\varepsilon \vert J_{0}\vert \label{eq:flux_sigma1}%
\end{equation}
and
\begin{equation}
\frac{1}{R}\int_{[R,2R]}J_{3}(z,\delta)dz\leq\varepsilon
\vert J_{0}\vert \quad \text{for each }R>0\,.\label{eq:flux_sigma3}%
\end{equation}

We prove first (\ref{eq:flux_sigma1}).  By (\ref{IneqG}), if $\delta<1$ and $\left(  r,\rho\right)  \in\Sigma_{1}(\delta)$, we have
$G\left(  r,\rho;\theta,\sigma\right)  \leq 2^{|\gamma|}c_{2}\rho^{\gamma},$
and, therefore,
\[
J_{1}(z,\delta)\leq  \frac{1}{d}2^{|\gamma|}c_{2}
\int_{\Omega_{z}\cap\Sigma_{1}(\delta)}drd\rho
\int_{\Delta^{d-1}} d\tau\left(  \theta\right)  \int_{\Delta^{d-1}} d\tau\left(  \sigma\right) r^{d}\rho^{d-1+\gamma} 
 F\left(  r,\theta\right)  F\left(  \rho,\sigma\right)
\,.
\]
We have already proven a powerlaw upper bound in 
(\ref{S3E4}), and this allows further bounding similar integrals via 
Lemma \ref{lem:bound}.  In particular, we may conclude that for any $q\in \R$ there is a constant $C$, which depends only on $q$, $\gamma$, $d$, and $C_1$, such that if $a>0$ and $R\ge  2a$, we have
\begin{align}\label{eq:Fpolqbound}
  \int_{[a,R]} dr  \int_{\Delta^{d-1}} d\tau\left(  \theta\right) r^{d-1+q} F(r,\theta)
  \le C  \sqrt{\vert J_{0}\vert }
  \int_a^R r^{q-\frac{\gamma+3}{2}} dr   \,. 
\end{align}
Now $\Omega_{z}\cap\Sigma_{1}(\delta)$ is contained in
$\left\{(r,\rho):  0< r \le z,\ \rho\ge \frac{r}{\delta},\, \frac{z}{2}\right\}$, 
and we first use the above powerlaw bound to estimate the
$\rho$-integral.  Setting $q=\gamma$ and taking $R\to\infty$, we find that for any $a>0$
\[
\int_{[a,\infty)} d\rho 
 \int_{\Delta^{d-1}} d\tau\left(  \sigma\right) \rho^{d-1+\gamma} 
  F\left(  \rho,\sigma\right)
\le
 C  \sqrt{\vert J_{0}\vert }
  \int_a^\infty \rho^{\gamma-\frac{\gamma+3}{2}} d\rho
  = \frac{2 C}{1-\gamma}\sqrt{\vert J_{0}\vert } a^{-\frac{1-\gamma}{2}}
   \,.
\]
We employ the estimate with $a=\max(\frac{r}{\delta},\frac{z}{2})$, and conclude that
\[
 J_{1}(z,\delta)\leq C  \sqrt{\vert J_{0}\vert } \int_{(0,z]}dr
\int_{\Delta^{d-1}} d\tau\left(  \theta\right)  r^{d}\max\!\left(\frac{r}{\delta},\frac{z}{2}\right)^{-\frac{1-\gamma}{2}}
 F\left(  r,\theta\right) \,.
\]
where the constant $C$ has been adjusted.  We can then apply the first item of Lemma  \ref{lem:bound}
to the interval $[a',z]$
with a weight $\varphi(r) = r \min(\frac{\delta}{r},\frac{2}{z})^{\frac{1-\gamma}{2}}$ and a
bound function $g(r)=C_1\sqrt{d\vert J_{0}\vert } r^{-\frac{1+\gamma}{2}} \min(\frac{2 \delta}{r},\frac{2}{z})^{\frac{1-\gamma}{2}}$.  Taking then $a'\to 0$ shows that there is 
a constant $\bar{C}_{2}$ which depends only on variables allowed in the Proposition and for which
\begin{align}\label{eq:J1delbound}
J_{1}(z,\delta)\leq\bar{C}_{2}\vert J_{0}\vert\left( \int_{0}^{z}\frac{dr}{r^{\frac
{1+\gamma}{2}}}\min\left\{  \frac{1}{z^{\frac{1-\gamma}{2}}},\frac{\delta ^{\frac{1-\gamma}{2}}}{r^{\frac{1-\gamma}{2}}}\right\} + \delta^{\frac{1-\gamma}{2}}\right)\,.
\end{align}
We use the change of variables $r=z\xi$ in the remaining integral which thus becomes equal to
$\int_{0}^{1}d\xi\, \xi^{-\frac{1+\gamma}{2}}
\min\left(  1, \delta  ^{\frac{1-\gamma}{2}}\xi^{-\frac{1-\gamma}{2}}\right)$. The
integral is independent of $z$, well defined for any $\delta>0$, and it converges to zero as $\delta\rightarrow 0$ due to Lebesgue's dominated
convergence theorem.  Thus (\ref{eq:J1delbound}) implies that 
(\ref{eq:flux_sigma1}) holds for all small enough $\delta$.

We next prove (\ref{eq:flux_sigma3}) and now assume that $0<\delta\leq\frac{1}%
{4}$ and choose an arbitrary $R>0$.  Integrating $J_{3}(z,\delta)$ over
$z\in\left[ R,2R\right]  $ and using that $\Omega_{z}\cap\Sigma_{3}\subset\{(r,\rho):0<\rho\leq\delta z,\ z-\rho\leq r\leq z\}$ as well as (\ref{IneqG}), we
obtain%
\begin{align*}
&\int_{\left[  R,2R\right]  }J_{3}(z,\delta)dz \\&  \qquad \leq C_{3}%
\int_{[R,2R]}dz\int_{(0,\delta z]}d\rho\int_{\lbrack z-\rho,z]}r^{\gamma
+1}dr\int_{\Delta^{d-1}}r^{d-1}F\left(  r,\theta\right)  d\tau\left(
\theta\right)  \int_{\Delta^{d-1}}\rho^{d-1}F\left(  \rho,\sigma\right)
d\tau\left(  \sigma\right)  \\
& \qquad \leq\tilde{C}_{3}R^{\gamma+1}\int_{[R,2R]}dz\int_{(0,2\delta R]}d\rho
\int_{\lbrack z-\rho,z]}dr\int_{\Delta^{d-1}}r^{d-1}F\left(  r,\theta\right)
d\tau\left(  \theta\right)  \int_{\Delta^{d-1}}\rho^{d-1}F\left(  \rho
,\sigma\right)  d\tau\left(  \sigma\right)
\end{align*}
where we use that $(0,\delta z]\subset(0,2\delta R].$  Here
$\{(r,z):z-\rho\leq r\leq z,\ R\leq z\leq2R\}\subset
\{(r,z):R/2\leq r\leq2R,\ r\leq z\leq r+\rho\}$ if
$0<\rho\leq R/2$, and thus employing Fubini's theorem we obtain%
\begin{align*}
&\int_{\left[  R,2R\right]  }J_{3}(z,\delta)dz \\&\qquad  \leq\tilde{C}%
_{3}R^{\gamma+1}\int_{(0,2\delta R]}d\rho\int_{\lbrack\frac{R}{2},R]}%
dr\int_{[r,r+\rho]}dz\int_{\Delta^{d-1}}r^{d-1}F\left(  r,\theta\right)
d\tau\left(  \theta\right)  \int_{\Delta^{d-1}}\rho^{d-1}F\left(  \rho
,\sigma\right)  d\tau\left(  \sigma\right)  \\
&  \qquad =\tilde{C}_{3}R^{\gamma+1}\int_{(0,2\delta R]}\rho d\rho\int_{\lbrack
\frac{R}{2},R]}dr\int_{\Delta^{d-1}}r^{d-1}F\left(  r,\theta\right)
d\tau\left(  \theta\right)  \int_{\Delta^{d-1}}\rho^{d-1}F\left(  \rho
,\sigma\right)  d\tau\left(  \sigma\right).
\end{align*}
Using here the estimate (\ref{eq:Fpolqbound}) to bound the remaining $r$- and $\rho$-integrals separately, we find that
\[
\int_{\left[   R,2R\right]  }J_{3}(z,\delta)dz\leq C
\vert J_{0}\vert  R^{\gamma+1}\int_{(0,2\delta R]}\frac{\rho d\rho}{\rho^{\frac{3+\gamma
}{2}}}\int_{[\frac{R}{2},R]}\frac{dr}{r^{\frac{3+\gamma}{2}}}\leq C_{4}%
R \vert J_{0}\vert \delta^{\frac{1-\gamma}{2}}\,.%
\] 
This implies (\ref{eq:flux_sigma3}) for all sufficiently small  $\delta$.

In particular, we can now conclude that for any $\vep>0$ there is a $\delta_\vep\in (0,\frac{1}{4}]$ such that for all $0<\delta<\delta_\vep$ both (\ref{eq:flux_sigma1}) and (\ref{eq:flux_sigma3}) hold. 
In order to prove (\ref{S3E5}), we first note that (\ref{FluxMult}) and
(\ref{JiFunctions}) imply (using an arbitrary $L>0$ if $f$ is a flux solution)
\begin{align}\label{eq:J0sum}
\vert J_{0}\vert =\sum_{j=1}^{3}J_{j}(z,\delta)\ \ \text{for all }z\geq L\,. 
\end{align}
We fix $\vep$ so that $0<\vep\le \frac{1}{4}$ and choose some $\delta\in (0,\delta_\vep)$.
We then integrate (\ref{eq:J0sum})  over $z\in\left[  R,2R\right]  $ with $R\ge L$, 
and use (\ref{eq:flux_sigma1}) and (\ref{eq:flux_sigma3}) to conclude that
\[
\frac{\vert J_{0}\vert R}{2}\leq \vert J_{0}\vert R\left(  1-2\varepsilon\right)  \leq\int_{\lbrack
R,2R]}J_{2}(z,\delta)dz\,.
\]

A simple geometrical argument shows that there exists a constant $b$, $0<b<1$, depending only on the choice of  $\delta$ such that
$\underset{z \in [R,2R]}{\bigcup} (\Omega_z \cap \Sigma_2(\delta)) \subset (\sqrt{b}R, R/\sqrt{b}]^2 $. 
(For a fixed $z$ and $(r,\rho)\in \Omega_z \cap \Sigma_2(\delta)$ one finds
$(\delta^{-1}+1)^{-1} z<r,\rho\le \delta^{-1} z$; thus for example 
$b = \frac{\delta^2}{4}$ would suffice.) 
Moreover, we then have
$G\left(  r,\rho;\theta,\sigma\right)  \leq C_{5}R^{\gamma}$ for all
$\left(  r,\rho\right)  \in\left(  \sqrt{b}R,R/\sqrt{b}\right]  ^{2}$ where
$C_{5}$ depends only on $b$, $\gamma$ and $c_{2}$. Thus using the definition of
$J_{2}(z,\delta)$ given in (\ref{JiFunctions}) we obtain%
\[
\frac{\vert J_{0}\vert R}{2}\leq C'R^{\gamma+2}\left(  \int_{\left\{  \sqrt{b}%
R\leq\left\vert x\right\vert \leq R/\sqrt{b}\right\}  }f\left(  dx\right)
\right)  ^{2}\ \ \text{for }R\geq L\,.
\]
Therefore,
\[
\frac{1}{R}\int_{\left\{  \sqrt{b}R\leq\left\vert x\right\vert \leq R/\sqrt
{b}\right\}  }f\left(  dx\right)  \geq\frac{\tilde{C}_{1}\sqrt{\vert J_{0}\vert }%
}{R^{\frac{3+\gamma}{2}}}\quad \text{for }R\geq L\,,%
\]
and (\ref{S3E5}) follows, after replacing $R/\sqrt{b}$ by $z$ (note that if $z> L/b$, then $\sqrt{b}z> L/\sqrt{b}>L$).  If $f$ is a constant flux solution, then the result holds for any $L>0$, and thus we may also set $\xi=0$ in (\ref{S3E5}).
\end{proof}

\medskip

\begin{remark}
Notice that for $\gamma> -1$, combining Proposition \ref{UppEstCont} with Lemma \ref{lem:bound} we obtain that the
number of clusters associated to the stationary injection solutions
$\int_{{\mathbb{R}}_{*}} f(dx)$ is finite. Moreover, the following estimates hold:
\[
\frac{ C'_{1}\sqrt{\vert J_{0}\vert }}{R^{(\gamma+ 1)/2}} \leq\int_{\left\{\vert x\vert \geq R\right\}} f(dx) \leq
\frac{ C'_{2}\sqrt{\vert J_{0}\vert }}{R^{(\gamma+ 1)/2}}\,, \qquad\text{for } R > L\,,
\]
where $\vert J_{0}\vert = \int_{\R^d_{*}} \vert x \vert \eta(dx)$ and $0< C'_{1} \leq C'_{2} $.
(The lower bound follows by setting $z=R/b>L/b$ in the Proposition.)
\end{remark}

We can now conclude a result similar to Proposition \ref{UppEstCont} for the
solutions of the discrete problem (\ref{S1E5}).  Although a fairly direct corollary of the previous result,
it is worth recording separately in detail, due to the relevance of discrete coagulation equation in applications.

\begin{proposition}
\label{UppEstDisc}
Suppose that $K$ is symmetric and satisfies (\ref{Ineq1}), (\ref{Ineq3}) with
$\gamma+2p<1.$ Let $\left\{  s_{\alpha}\right\}  _{\alpha\in\mathbb{N}_{\ast
}^{d} }$ be a nonnegative sequence supported on a
finite number of values $\alpha$, and suppose $L_s>1$ is such that $s_\alpha=0$ if $|\alpha|> L_s$.
Suppose that $\left\{  n_{\alpha}\right\}  _{\alpha
\in\mathbb{N}_{*}^{d}  }$ is a stationary injection
solution to (\ref{S1E5}) in the sense of Definition \ref{DefStatInjDis}. We
denote as $\vert J_{0}\vert $ the total injection rate of monomers given by $\vert J_{0}\vert 
=\sum_{\alpha}\left\vert \alpha\right\vert s_{\alpha}.$ Then, there exist
positive constants $C_{1},\ C_{2}$ and $b\in\left(  0,1\right)  $ depending
only on $\gamma,\ p,\ d$ and the constants $c_{1},c_{2}$ in (\ref{Ineq1}) such
that%
\begin{align}
\frac{1}{z}\sum_{\frac{z}{2}\leq\left\vert \alpha\right\vert \leq z}%
n_{\alpha}  &  \leq\frac{C_{1}\sqrt{\vert J_{0}\vert }}{z^{\frac{3+\gamma}{2}}%
}\ \ \ \text{for all }z\geq 1\,,\label{S3E6}\\
\frac{1}{z}\sum_{bz\leq\left\vert \alpha\right\vert \leq z}n_{\alpha}  &
\geq\frac{C_{2}\sqrt{\vert J_{0}\vert }}{z^{\frac{3+\gamma}{2}}}\ \ \ \text{for all }%
z\geq\frac{L_{s}}{b}\,. \label{S3E7}%
\end{align}
\end{proposition}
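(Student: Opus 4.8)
The plan is to deduce the discrete estimates directly from the continuous ones in Proposition~\ref{UppEstCont} by realizing the discrete solution as a sum of weighted Dirac masses, exactly as in the reduction of the discrete non-existence result to the continuous one in the proof of Theorem~\ref{NonexistCont}. First I would set
\[
 f:=\sum_{\alpha\in\mathbb{N}_{*}^{d}} n_{\alpha}\,\delta(\cdot-\alpha)\,,
 \qquad
 \eta:=\sum_{\alpha\in\mathbb{N}_{*}^{d}} s_{\alpha}\,\delta(\cdot-\alpha)\,.
\]
Since $\{s_\alpha\}$ is supported on a finite set and $s_\alpha=0$ for $|\alpha|>L_s$, while every $\alpha\in\mathbb{N}_{*}^{d}$ satisfies $|\alpha|\ge 1$, the measure $\eta$ belongs to $\mathcal{M}_{+,b}(\mathbb{R}_{*}^{d})$ and is supported in $\defset{x\in\mathbb{R}_{*}^{d}}{1\le|x|\le L_s}$. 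By Definition~\ref{DefStatInjDis}, the very statement that $\{n_\alpha\}$ is a stationary injection solution of (\ref{S1E5}) means that $f\in\mathcal{M}_{+}(\mathbb{R}_{*}^{d})$ is a stationary injection solution of (\ref{eq:contStat}) in the sense of Definition~\ref{StatInjDef}, with respect to a continuous symmetric kernel $\widetilde K$ on $\mathbb{R}_{*}^{d}\times\mathbb{R}_{*}^{d}$ extending $\{K_{\alpha,\beta}\}$.

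Then I would apply Proposition~\ref{UppEstCont}, in its stationary injection case, to this pair $(f,\eta)$ with $L:=L_s$. Beforehand I would record that $\int_{\mathbb{R}_{*}^{d}}x\,\eta(dx)=\sum_{\alpha}\alpha\,s_{\alpha}=:J_0\in\mathbb{R}_{*}^{d}$ and, since $\alpha$ and $s_\alpha$ have nonnegative entries, $|J_0|=\sum_{j=1}^{d}\sum_{\alpha}\alpha_j s_\alpha=\sum_{\alpha}|\alpha|\,s_\alpha$, which is exactly the total injection rate appearing in the statement. Proposition~\ref{UppEstCont} then yields constants $C_1,C_2>0$ and $b\in(0,1)$, depending only on $\gamma,p,d$ and on $c_1,c_2$, such that (\ref{S3E4}) holds for all $z>0$ and (\ref{S3E5}) holds for all $z>\xi=L_s/b$. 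Since $f$ is supported on $\mathbb{N}_{*}^{d}$, one has $\int_{z/2\le|x|\le z}f(dx)=\sum_{z/2\le|\alpha|\le z}n_\alpha$ and $\int_{bz\le|x|\le z}f(dx)=\sum_{bz\le|\alpha|\le z}n_\alpha$, so that (\ref{S3E4}) becomes (\ref{S3E6}) and (\ref{S3E5}) becomes (\ref{S3E7}); for $0<z<1$ the sum in (\ref{S3E6}) is empty, so nothing is lost by stating it only for $z\ge 1$.

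There is essentially no analytic obstacle, since all of the work is already contained in Proposition~\ref{UppEstCont}; the only point deserving a line of justification is the existence of the continuous extension $\widetilde K$ whose bounds (\ref{Ineq2}), (\ref{Ineq3}) are governed by the same constants $c_1,c_2$ as in (\ref{Ineq1}). This is routine: writing $M(x,y):=(|x|+|y|)^{\gamma}\,\Phi(|x|/(|x|+|y|))$, which is continuous and strictly positive on $\mathbb{R}_{*}^{d}\times\mathbb{R}_{*}^{d}$, the quotients $K_{\alpha,\beta}/M(\alpha,\beta)$ lie in $[c_1,c_2]$, and one extends the map $(\alpha,\beta)\mapsto K_{\alpha,\beta}/M(\alpha,\beta)$ to a continuous symmetric $[c_1,c_2]$-valued function on $\mathbb{R}_{*}^{d}\times\mathbb{R}_{*}^{d}$ (for instance by multilinear interpolation over the integer grid, which stays in $[c_1,c_2]$ because it uses convex combinations, extended by the nearest grid values near the boundary of the positive cone); multiplying that extension back by $M$ produces $\widetilde K$, and this is the only place where any new computation beyond Proposition~\ref{UppEstCont} enters.
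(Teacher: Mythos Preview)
Your proposal is correct and follows exactly the same approach as the paper: the paper's proof consists of a single sentence noting that $f=\sum_{\alpha}n_{\alpha}\delta(\cdot-\alpha)$ and $\eta=\sum_{\alpha}s_{\alpha}\delta(\cdot-\alpha)$ satisfy the hypotheses of Proposition~\ref{UppEstCont}, so that (\ref{S3E4}) and (\ref{S3E5}) translate directly into (\ref{S3E6}) and (\ref{S3E7}). You actually go slightly further by spelling out the construction of the continuous extension $\widetilde K$ with the same constants $c_1,c_2$, a point the paper leaves implicit.
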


\begin{proof}
This result is just a Corollary of Proposition \ref{UppEstCont} since
$f\left(  \cdot\right)  =\sum_{\alpha}n_{\alpha}\delta\left(  \cdot
-\alpha\right)  $ and $\eta\left(  \cdot\right)  =\sum_{\alpha}s_{\alpha
}\delta\left(  \cdot-\alpha\right)  $ satisfy all the assumptions in
Proposition \ref{UppEstCont}, and (\ref{S3E4}), (\ref{S3E5}) imply
(\ref{S3E6}), (\ref{S3E7}) respectively.
\end{proof}


 \bigskip

\bigskip

\bigskip

\noindent \textbf{Acknowledgements.}
The authors gratefully acknowledge the support of the Hausdorff Research Institute for Mathematics
(Bonn), through the {\it Junior Trimester Program on Kinetic Theory},
of the CRC 1060 {\it The mathematics of emergent effects} at the University of Bonn funded through the German Science Foundation (DFG), 
 of the {\it Atmospheric Mathematics} (AtMath) collaboration of the Faculty of Science of University of Helsinki, of the ERC Advanced Grant 741487 as well as of  
the Academy of Finland via the {\it Centre of Excellence in Analysis and Dynamics Research} (project No.\ 307333).
The funders had no role in study design, analysis, decision to
publish, or preparation of the manuscript.

\bigskip
\noindent\textbf{Compliance with ethical standards} 
\smallskip

\noindent \textbf{Conflict of interest} The authors declare that they have no conflict of interest.

\bigskip

\bigskip

\bigskip 

 \bigskip
 
 \def\adresse{
\begin{description}

\item[M.~A. Ferreira] 
{Department of Mathematics and Statistics, University of Helsinki,\\ P.O. Box 68, FI-00014 Helsingin yliopisto, Finland \\
E-mail:  \texttt{marina.ferreira@helsinki.fi}}

\item[J. Lukkarinen]{ Department of Mathematics and Statistics, University of Helsinki, \\ P.O. Box 68, FI-00014 Helsingin yliopisto, Finland \\
E-mail: \texttt{jani.lukkarinen@helsinki.fi}}

\item[A. Nota:] { Department of Information Engineering, Computer Science and Mathematics,\\ University of L'Aquila, 67100 L'Aquila, Italy\\
E-mail: \texttt{alessia.nota@univaq.it}}

\item[J.~J.~L. Vel\'azquez] { Institute for Applied Mathematics, University of Bonn, \\ Endenicher Allee 60, D-53115 Bonn, Germany\\
E-mail: \texttt{velazquez@iam.uni-bonn.de}}

\end{description}
}

\adresse
 
 \bigskip

\end{document}